\newtheorem{thm}{Theorem}
\newtheorem{lem}[thm]{Lemma}
\newtheorem{cor}[thm]{Corollary}
\newtheorem{prop}[thm]{Proposition}
\newtheorem{rem}{Remark}
\newtheorem{defn}{Definition}
\newtheorem{claim}{Claim}
\title[Asymptotics for Carleman polynomials]{Asymptotic behavior and zero distribution of Carleman orthogonal polynomials}
\author{Peter Dragnev}
\address{Indiana-Purdue University Fort Wayne, Department of Mathematical Sciences,\\
2101 E. Coliseum Blvd., Fort Wayne, IN 46805-1499, USA}
\email{dragnevp@ipfw.edu}
\author{Erwin Mi\~{n}a-D\'{\i}az}
\address{The University of Mississippi,  Department of Mathematics, P. O. Box 1848, \\ University, MS  38677-1848, USA}
\email{minadiaz@olemiss.edu}
\begin{document}
\begin{abstract}
 Let $L$ be an analytic Jordan curve and let $\{p_n(z)\}_{n=0}^\infty$ be the sequence of polynomials that are orthonormal with respect to the area measure over the interior of $L$. A well-known result of Carleman states that
\begin{equation}\label{eq12}
 \lim_{n\to\infty}\frac{p_n(z)}{\sqrt{(n+1)/\pi}\,[\phi(z)]^{n}}= \phi'(z)
\end{equation}
locally uniformly on certain open neighborhood of the closed exterior of $L$, where $\phi$ is the canonical conformal map of the exterior of $L$ onto the exterior of the unit circle. In this paper we extend the validity of (\ref{eq12}) to a maximal open set, every boundary point of which is an accumulation point of the zeros of the $p_n$'s. Some consequences on the limiting distribution of the zeros are discussed, and the results are illustrated with two concrete examples and numerical computations.
 \end{abstract}
 
 \maketitle

\section{Introduction}

Polynomials of a complex variable that are orthogonal over a bounded domain of the complex plane were first investigated by T. Carleman \cite{Carleman} in 1922, and considerable progress has been made since then in clarifying questions such as the convergence of Fourier series in these polynomials, their completeness in different Banach spaces of analytic functions,  their asymptotic behavior and more recently the limiting distribution of their zeros (see, e.g., the monograph by  Suetin \cite{Suetin1} and the references therein, together with the papers \cite{Andriev1}, \cite{Andriev2}, \cite{Eiermann}, \cite{Levin}, \cite{victor}, \cite{mina1}, \cite{mina2}).  Generally speaking, all these questions are dependent of the boundary properties of the orthogonality domain, and in the present paper we specifically consider the case of a domain with analytic boundary, having as the subject of our investigation the asymptotic behavior and zero distribution of the corresponding orthogonal polynomials.

Let $L_1$ be an analytic Jordan curve in the
complex plane $\mathbb{C}$ and let $G_1$ be its interior domain, that is, the bounded component of $\mathbb{C}\setminus L_1$. By applying the Gram-Schmidt orthonormalization process to the sequence $1,z,z^2,\ldots$, we can construct a unique sequence of complex analytic polynomials $\{p_n(z)\}_{n=0}^\infty$ (each $p_n$ having degree $n$ and positive leading coefficient) that are orthonormal over $G_1$ with respect to the normalized area measure $\pi^{-1}dxdy$, that is, satisfying
\begin{equation}\label{eq64}
\frac{1}{\pi}\int_{G_1}p_{n}(z)\overline{p_{m}(z)}dxdy=\left\{\begin{array}{ll}
                                                                 0,\ &\ n\not=m, \\
                                                                 1,\ &\ n=m.
                                                               \end{array}
\right.
\end{equation}

These polynomials were first examined by T. Carleman in his study \cite{Carleman} on the approximation of analytic functions by polynomials over a bounded Jordan domain. In particular, Carleman investigated the behavior of $p_n(z)$ as $n\to\infty$, finding a fundamental result that we state below after setting some needed notation.

For  a planar set $K$ and a function $f$ defined on
$K$, $\overline{K}$ and $\partial K$ denote, respectively, the closure and the boundary of $K$ in the extended complex plane  $\overline{\mathbb{C}}$, and
$f(K):=\{f(z):z\in K\}$.

Given $r\geq 0$, we set
\[
\mathbb{T}_r:=\{w:|w|=r\},\quad
\Delta_r:=\{w:r<|w|\leq \infty\},\quad
\mathbb{D}_r:=\{w:|w|<r \}.
\]

Let $\Omega_1$ be the unbounded component of $\overline{\mathbb{C}}\setminus L_1$, and let $\psi(w)$ be the unique conformal map of
$\Delta_1$ onto $\Omega_1$ satisfying that
$\psi(\infty)=\infty$, $\psi'(\infty)>0$. Let
$\rho\geq 0$ be the radius of univalency of $\psi$, that is, the smallest number such that
$\psi$ has an analytic and \emph{univalent}
continuation to $\{w:\rho<|w|<\infty\}$. Because $L_1$ is an analytic Jordan curve,
$\rho<1$. For every $\rho\leq r<\infty$, set
\begin{equation}\label{eq30}
\Omega_r:=\psi(\Delta_r),\quad L_r:=\partial
\Omega_r,\quad G_r:=\mathbb{C}\setminus
\overline{\Omega}_r,
\end{equation}
and let
\[
\phi(z):\Omega_{\rho}\to
\Delta_{\rho}
\]
be the inverse of $\psi$. Observe that for $r>\rho$, $L_r$ is an analytic Jordan curve.

Carleman's fundamental result  mentioned above  (\cite[Satz IV]{Carleman}, see also \cite[Sec. 2]{Gaier1}) states that
\begin{equation}\label{eq8}
h_n(z):=\frac{p_{n}(z)}{\sqrt{n+1}[\phi(z)]^n}-\phi'(z)=o(1)
\end{equation}
locally uniformly on $\Omega_\rho$ as $n\to\infty$. More precisely, Carleman proved that $h_n(z)$ converges uniformly as $n\to\infty$ to zero on each $\overline{\Omega}_r$, $r>\rho$, with the rate
\begin{equation}\label{eqq38}
h_n(z)=\left\{\begin{array}{ll}
O\left(\sqrt{n}\rho^n\right),\ &\ r\geq 1, \\
          {\displaystyle O\left(n^{-1/2}(\rho/r)^n\right)},\ &\ \rho<r<1.
       \end{array}\right.
\end{equation}

Progress in understanding the behavior of $p_n$ in $\mathbb{C}\setminus \Omega_\rho$ has been recently made in \cite{mina1}, where the following asymptotic integral representation for $p_n$ has been obtained. If $\varphi(z)$ is a conformal map of $G_1$ onto the unit disk, then $\varphi$ has an analytic continuation across $L_1$, so that the composition $\varphi(\psi(w))$ is well-defined and analytic on the unit circle $\mathbb{T}_1$, and we have (see \cite[Theorem 2.1.2 and Eq. (14)]{mina1})
\emph{\begin{equation}\label{eq2}
 p_{n}(z)=\frac{\sqrt{n+1}\,\varphi'(z)}{2\pi
 i}\oint_{\mathbb{T}_1}\frac{w^{n}dw}{\varphi(\psi(w))-\varphi(z)}
 +\epsilon_n(z),\quad z\in G_1,\quad n\geq 0,
\end{equation}
where the functions $\epsilon_n(z)$
are analytic in $G_{1/\rho}$
and have the following property: if $E\subset
G_{1/\rho}$ is such that for some
$0<\tau<1/\rho$,
\begin{equation*}
p_n(z)=O\left(\sqrt{n}\tau^n\right)
\end{equation*}
uniformly on $E$ as $n\to\infty$, then
\begin{equation*}
\epsilon_n(z)=O\left(\sqrt{n}(\tau\rho)^n\right)
\end{equation*}
uniformly on $E$ as $n\to\infty$.
}

This representation is used in \cite{mina1} to derive finer asymptotics for $p_n$ and its zeros under the assumption that (roughly speaking) the boundary of $\Omega_\rho$ is a piecewise analytic curve. As a little bonus, one also obtains from (\ref{eq2}) (see \cite[Corollary 2.1.3]{mina1}) that the  $\sqrt{n}$ factor occurring in (\ref{eqq38}) for the case $r\geq 1$ can be dropped.

In the present paper we exploit (\ref{eq2}) to extend the validity of Carleman's formula (\ref{eq8}) from the band $\Omega_{\rho}\cap G_1$ toward a maximal open subset $\Sigma_1$ of $G_1$ that is, in general, larger than  $\Omega_{\rho}\cap G_1$. $\Sigma_1$ is the largest open subset of $G_1$ where a strong asymptotic formula like (\ref{eq8}) holds true, and every point of $\partial\Sigma_1\cap G_1$ is an accumulation point of the zeros of the $p_n$'s.

These results are stated  in Section \ref{sec2} as Theorems \ref{thm3} and \ref{thm4}. Some consequences on the limiting distribution of the normalized counting measures of the zeros of the $p_n$'s are presented as Theorem \ref{thm2}. The definition of $\Sigma_1$ and its finding in concrete situations involves the meromorphic continuation of the map $\varphi(\psi(w))$ occurring inside the integrand in (\ref{eq2}). We study such a continuation in Propositions \ref{prop1}, \ref{prop6} and \ref{prop2} of Section \ref{sec2}. In Section \ref{sec3}, we discuss two concrete examples to illustrate the main results and the use of the propositions. Numerical computations of the zeros of the orthogonal polynomials are provided in both examples, and a problem concerning the behavior of the polynomials in the second example is posed for future investigation. Finally, the results are proven in Sections \ref{sec4} and \ref{sec5}.

\section{Main results}\label{sec2}
Let $\varphi$ be a conformal map of $G_1$ onto $\mathbb{D}_1$.
Because $L_1$ is a Jordan curve, $\varphi$ can be extended as a continuous and
bijective function $\varphi:\overline{G}_1\to
\overline{\mathbb{D}}_1$. Moreover, being $L_1$
analytic, $\varphi$ has a one-to-one meromorphic
continuation to $G_{1/\rho}$, which satisfies
\begin{equation}\label{eq66}
\varphi(z)=\frac{1}{\overline{\varphi\left(z^*\right)}},
\quad z\in  \Omega_{\rho}\cap G_{1/\rho},
\end{equation}
where
\begin{equation}\label{eq70}
z^*:=\psi\left(1\big/\overline{\phi(z)}\right)
\end{equation}
is the Schwarz reflection  about $L_1$ of the
point $z\in \Omega_{\rho}\cap G_{1/\rho}$ (see
\cite{Davis} for details).

The function $\psi$ is analytic and univalent on $\Delta_\rho$, mapping the annulus $\rho<|w|<1/\rho$ conformally onto the band $\Omega_\rho\cap G_{1/\rho}$, so that
\[
\varphi(\psi(w)), \quad \rho<|w|<1/\rho,
\]
is a one-to-one meromorphic function that is analytic on $\rho<|w|\leq 1$.

\begin{defn}\label{defn1}Let $\mu\geq 0$ be the smallest number such that $\varphi(\psi(w))$ has a \emph{meromorphic} continuation, denoted by $h_\varphi(w)$, to the annulus \[\left\{w:\mu<|w|<1/\rho\right\}.
\]
Let $\Sigma$ be the set of those points $z\in G_1$ for which the equation
\begin{equation}\label{eq4}
h_\varphi(w)=\varphi(z)
\end{equation}
has at least one solution in $\mu<|w|<1$. Let $\Sigma_0:=G_1\setminus \Sigma$.
\end{defn}

We say that a solution $\omega$ of (\ref{eq4}) has multiplicity $\alpha\geq 1$ if
\begin{equation*}
h^{(\alpha)}_\varphi(\omega)\not=0, \quad h^{(j)}_\varphi(\omega)=0\quad 1\leq j<\alpha.
\end{equation*}
Consider a $z\in \Sigma$. Since $h_\varphi(w)$ is one-to-one on $\rho<|w|<1$, among the solutions to (\ref{eq4}), a finite number, say $\omega_{z,1},  \ldots, \omega_{z,s}$ ($s\geq 1$), will have largest modulus. Let $\alpha_{z,k}$ denote the multiplicity of  $\omega_{z,k}$ ($1\leq k\leq s$). We decompose $\Sigma$ in subsets $\Sigma_p$, $p=1,2,\ldots$, defined by the relation
\begin{equation}\label{eq102}
z\in \Sigma_p \Leftrightarrow \alpha_{z,1}+\cdots+ \alpha_{z,s}=p.
\end{equation}

Thus, $\Sigma_1$ consists of those points $z\in \Sigma$ such that the equation
(\ref{eq4}) has exactly one solution in $\mu<|w|<1$ of largest modulus, and this solution is simple.

Let the function $\Phi:\Sigma_1\to \{w:\mu<|w|<1\}$ be defined as
\[
\Phi(z):=\omega_{z,1},\quad z\in \Sigma_1,
\]
and let $r:G_1\to[\mu,1)$ be defined as
\begin{equation}\label{eq5}
r(z):=\left\{\begin{array}{ll}
               |\omega_{z,1}|, & z\in \Sigma, \\
               \mu,  & z\in \Sigma_0.
             \end{array}\right.
\end{equation}

It is easy to see (see the first two paragraphs of Section \ref{sec4}) that the number $\mu$, the sets $\Sigma$, $\Sigma_p$, and the functions $\Phi(z)$, $r(z)$ are, indeed, independent of the choice of the interior map $\varphi$. Also (see Corollary \ref{cor1} in Section \ref{sec4}) $\Sigma$ and $\Sigma_1$ are open, $\Sigma_1\supset \Omega_\rho\cap G_1$, the map $\Phi$ is a \emph{one-to-one} analytic function and $r(z)$ is continuous.

Note that
\[
\Phi(z)=\phi(z), \quad z\in \Omega_\rho\cap G_1,
\]
and that $r(z)=|\Phi(z)|$ for all $z\in \Sigma_1$. Our main result is the following theorem.

\begin{thm} \label{thm3}
\begin{enumerate} \item For every compact set $E\subset \Sigma_1$, there exists a number $0< \delta<1$ such that
\begin{equation*}
\frac{p_{n}(z)}{\sqrt{n+1}[\Phi(z)]^n}-\Phi'(z)=O(\delta^n)
\end{equation*}
uniformly on $E$ as $n\to\infty$. 
\item
\begin{equation*}
\limsup_{n\to\infty}|p_n(z)|^{1/n}=r(z),\quad z\in G_1.
\end{equation*}
\end{enumerate}
\end{thm}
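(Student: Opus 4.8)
The plan is to derive everything from the integral representation (\ref{eq2}) by collapsing the contour $\mathbb{T}_1$ inward and collecting residues. Two preliminary facts drive the argument. First, since $\pi^{-1}\int_{G_1}|p_n|^2=1$ by (\ref{eq64}), the area mean-value property for the analytic function $p_n$ together with the Cauchy–Schwarz inequality gives $|p_n(z)|\le 1/\mathrm{dist}(z,L_1)$, so $|p_n|$ is bounded, uniformly in $n$, on every compact subset of $G_1$; in particular $p_n=O(1)=O(\sqrt n\cdot 1^n)$ there. Second, feeding $\tau=1$ into the stated property of the $\epsilon_n$'s yields $\epsilon_n(z)=O(\sqrt n\,\rho^n)$ on compacta of $G_{1/\rho}$, and iterating this (each pass replacing a bound $p_n=O(\sqrt n\,\tau^n)$ on a compact $E$ by $\epsilon_n=O(\sqrt n\,(\tau\rho)^n)$, then reinjecting into (\ref{eq2})) produces after finitely many steps the following bootstrap principle: if on a compact $E\subset G_{1/\rho}$ the contour-integral term of (\ref{eq2}) is $O(n^{M}t^{n})$ for some $M\ge 0$ and $t<1$, then $\epsilon_n(z)=O(\sqrt n\,t'^{n})$ uniformly on $E$ for every $t'>t\rho$. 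Since $\rho<1$ we have $t\rho<t$, so $\epsilon_n$ is eventually dominated by the contour-integral term; this is the mechanism that makes the remainder exponentially negligible.

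For part (a), fix $z_0\in\Sigma_1$. The solutions of (\ref{eq4}) form a discrete set in $\mu<|w|<1$, and $\Phi(z_0)$ is the unique one of largest modulus and is simple; the poles of $h_\varphi$ are isolated in $\mu<|w|<1/\rho$. Hence we may choose $\mu<\sigma<|\Phi(z_0)|$ so that on $\sigma\le|w|\le 1$ the only zero of $h_\varphi(w)-\varphi(z_0)$ is $w=\Phi(z_0)$ and there is no pole of $h_\varphi$ on $|w|=\sigma$ or $|w|=1$; on $|w|=1$ there are no zeros because $h_\varphi(\mathbb{T}_1)=\varphi(\psi(\mathbb{T}_1))=\varphi(L_1)=\mathbb{T}_1$ while $\varphi(z_0)\in\mathbb{D}_1$. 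A Rouché argument then shows that for $z$ in a sufficiently small neighborhood $N$ of $z_0$ the function $h_\varphi(w)-\varphi(z)$ still has a single simple zero in $\sigma<|w|<1$ — necessarily $\Phi(z)$, analytic in $z$ by the implicit function theorem — and none on $|w|=\sigma$ or $|w|=1$. Collapsing $\mathbb{T}_1$ to $\mathbb{T}_\sigma$ in (\ref{eq2}) and applying the residue theorem (the poles of $h_\varphi$ are zeros, not poles, of the integrand since $\varphi(z)$ is finite), with $\mathrm{Res}_{w=\Phi(z)}\,w^n/(h_\varphi(w)-\varphi(z))=[\Phi(z)]^n/h_\varphi'(\Phi(z))$ and $\tfrac{1}{2\pi i}\oint_{\mathbb{T}_\sigma}=O(\sigma^n)$, gives
\[
p_n(z)=\sqrt{n+1}\,\frac{\varphi'(z)}{h_\varphi'(\Phi(z))}\,[\Phi(z)]^n+O(\sqrt n\,\sigma^n)+\epsilon_n(z),\qquad z\in N .
\]
Differentiating $h_\varphi(\Phi(z))=\varphi(z)$ shows $\varphi'(z)/h_\varphi'(\Phi(z))=\Phi'(z)$. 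Shrinking $N$ so that $\inf_N|\Phi|>\max(\sigma,\rho\sup_N|\Phi|)$ — possible since $|\Phi|$ is continuous and positive near $z_0$ — the bootstrap gives $\epsilon_n(z)=O(\sqrt n\,t'^n)$ on $N$ with $\sigma<t'<\inf_N|\Phi|$, and dividing through by $\sqrt{n+1}[\Phi(z)]^n$ yields the claim on $N$ with $\delta=\max(\sigma,t')/\inf_N|\Phi|<1$. Covering a given compact $E\subset\Sigma_1$ by finitely many such neighborhoods completes part (a); part (b) for $z\in\Sigma_1$ is then immediate, since $\Phi'(z)\ne 0$ and $|\Phi(z)|=r(z)$.

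For part (b) with $z\in\Sigma_p$, $p\ge 2$, the same collapse picks up the finitely many largest-modulus solutions $\omega_{z,1},\dots,\omega_{z,s}$ of (\ref{eq4}); the residue at $\omega_{z,k}$, a zero of $h_\varphi(w)-\varphi(z)$ of order $\alpha_{z,k}$, equals $Q_k(n)\,\omega_{z,k}^{\,n}$ with $Q_k$ a polynomial of degree $\alpha_{z,k}-1$ and nonzero leading coefficient. The bootstrap makes $\epsilon_n(z)$ and the residual $\mathbb{T}_\sigma$-integral exponentially smaller than $r(z)^n$, so $p_n(z)=\sqrt{n+1}\,\varphi'(z)\sum_k Q_k(n)\,\omega_{z,k}^{\,n}$ up to such an error; the resulting upper bound is $\limsup_n|p_n(z)|^{1/n}\le r(z)$. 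For the matching lower bound, since $|\omega_{z,k}|=r(z)$ for all $k$, the $\omega_{z,k}$ are distinct, and the $Q_k$ are not identically zero, a Cesàro-averaging (Parseval) computation shows $\limsup_n\bigl|\sum_k Q_k(n)\,\omega_{z,k}^{\,n}\bigr|^{1/n}=r(z)$, whence $\limsup_n|p_n(z)|^{1/n}=r(z)$. Finally, for $z\in\Sigma_0$, equation (\ref{eq4}) has no solution in $\mu<|w|<1$, and together with $h_\varphi(\mathbb{T}_1)=\mathbb{T}_1$ and $|h_\varphi(w)|>1$ on $1<|w|<1/\rho$ this shows that $f(w):=1/(h_\varphi(w)-\varphi(z))$ is holomorphic on the entire annulus $\mu<|w|<1/\rho$. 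Its Laurent coefficient of $w^{-n-1}$ is $\tfrac{1}{2\pi i}\oint_{\mathbb{T}_1}w^n\,dw/(h_\varphi(w)-\varphi(z))$, which by (\ref{eq2}) equals $(\sqrt{n+1}\,\varphi'(z))^{-1}(p_n(z)-\epsilon_n(z))$; and since a meromorphic continuation of $f$ across $|w|=\mu$ would, via $h_\varphi=\varphi(z)+1/f$, contradict the minimality of $\mu$ in Definition \ref{defn1}, the inner radius of convergence of this Laurent series is exactly $\mu$. Hence $\limsup_n$ of the modulus of that coefficient, raised to the power $1/n$, equals $\mu$; the bootstrap disposes of $\epsilon_n$, and $\limsup_n|p_n(z)|^{1/n}=\mu=r(z)$ follows (the case $\mu=0$ being handled directly, as the bootstrap then forces $p_n(z)=O(\sqrt n\,\rho^{kn})$ for every $k$).

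The main obstacles I anticipate are, first, the Rouché/Hurwitz bookkeeping needed to localize $\Phi(z)$ and to make the contour-collapse error estimates uniform on compacta — technical but essentially routine — and, second and more substantially, the two lower bounds in part (b): identifying $\mu$ as the \emph{exact} inner radius of convergence of $1/(h_\varphi-\varphi(z))$ on $\Sigma_0$ (which requires that a meromorphic continuation of this reciprocal would force one of $h_\varphi$), and the non-cancellation of the residue exponential polynomials $\sum_k Q_k(n)\,\omega_{z,k}^{\,n}$ on $\Sigma_p$, $p\ge 2$. These are where the sharpness of the statement — in particular that every boundary point of $\Sigma_1$ in $G_1$ attracts zeros — ultimately comes from.
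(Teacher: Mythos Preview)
Your proposal is correct and shares the paper's core mechanism: collapse the contour in (\ref{eq2}) inward, control $\epsilon_n$ via its stated property, and pick up residues. Your ``bootstrap'' is the paper's layered induction (Lemma~\ref{lem2}, indexed by $k$ with $\max\{\mu,\rho^k\}<r(z)<1$) in different packaging; both amount to repeatedly feeding a bound $p_n=O(\sqrt n\,\tau^n)$ through (\ref{eq2}) to shrink $\tau$ by a factor~$\rho$.

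Where you diverge is part~(b) for $z\in\Sigma_p$, $p\ge 2$. You compute the residues at the $\omega_{z,k}$ and then argue that the exponential polynomial $\sum_k Q_k(n)\,\omega_{z,k}^{\,n}$ has $\limsup^{1/n}=r(z)$. This is correct, but the Parseval/Ces\`aro step you sketch is clean only when all $Q_k$ are constants; with higher-order zeros you must first isolate the terms of top polynomial degree before averaging. The paper sidesteps this entirely: for \emph{every} $z\in G_1$ the function $(h_\varphi(w)-\varphi(z))^{-1}$ is holomorphic on the annulus $r(z)<|w|<1/\rho$ and has a genuine singularity on $|w|=r(z)$ --- a pole at $\omega_{z,1}$ if $z\in\Sigma$, and otherwise by exactly the minimality-of-$\mu$ argument you yourself give for $\Sigma_0$. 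Hence its Laurent coefficients $a_{-n-1}(z)=\frac{1}{2\pi i}\oint_{\mathbb{T}_1}\frac{w^n\,dw}{h_\varphi(w)-\varphi(z)}$ satisfy $\limsup_n|a_{-n}(z)|^{1/n}=r(z)$ directly, and since (\ref{eq2}) reads $p_n(z)=\sqrt{n+1}\,\varphi'(z)\,a_{-n-1}(z)+\epsilon_n(z)$ with $\epsilon_n$ exponentially smaller, the conclusion follows at once. This single Laurent argument handles $\Sigma_0$ and all $\Sigma_p$ uniformly, with no residue bookkeeping and no exponential-polynomial lower bound; you may prefer to adopt it.
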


This result has several implications on the asymptotic zero distribution of the orthogonal polynomials. Consider the set $\mathcal{Z}$ of accumulation points of the zeros of the $p_n$'s, that is, $\mathcal{Z}$ consists of those points $t\in\overline{\mathbb{C}}$ such that every neighborhood of $t$ contains zeros of infinitely many polynomials $p_n$. A simple consequence of (\ref{eq8}) and Theorem \ref{thm3}(a) is that every closed subset of $\overline{\Omega}_1\cup\Sigma_1$ may contain zeros of at most finitely many polynomials $p_n$, and therefore, $\mathcal{Z}\subset G_1\setminus \Sigma_1$. Moreover, we have

\begin{thm} \label{thm4} $\partial\Sigma_1\cap G_1\subset \mathcal{Z}$.
\end{thm}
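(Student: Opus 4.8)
The plan is to show that every point of $\partial\Sigma_1\cap G_1$ is an accumulation point of zeros by combining the two parts of Theorem \ref{thm3} with a careful analysis of what goes wrong at the boundary of $\Sigma_1$. Fix $t\in\partial\Sigma_1\cap G_1$. Since $\Sigma_1$ is open, $t\notin\Sigma_1$; since $t\in G_1$, either $t\in\Sigma_0$ or $t\in\Sigma_p$ for some $p\geq 2$. In either case I want to produce, in every neighborhood of $t$, a zero of $p_n$ for infinitely many $n$. The mechanism is an argument-principle / Hurwitz-type comparison: near a nearby point $z_0\in\Sigma_1$ the polynomials behave like $\sqrt{n+1}\,[\Phi(z_0)]^n\Phi'(z_0)$ with a controlled error $O(\delta^n)$, but approaching $t$ the normalizing behavior forces $|p_n|^{1/n}\to r(t)$, and the transition between these two regimes cannot be uniform unless zeros intervene.

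The key steps, in order, are as follows. First I would establish the ``escape'' estimate: there is a neighborhood $U$ of $t$ and, on $U$, the function $r(z)$ is continuous (stated in the excerpt) with $r(t)\leq \limsup|\Phi(z)|$ as $z\to t$ from within $\Sigma_1$, and in fact I expect a strict drop $r(t) < r(z_0)$ for suitable $z_0\in\Sigma_1\cap U$, or else a merging of solutions that makes $\Phi$ discontinuous there — this is exactly the content of $t\in\partial\Sigma_1$. Second, pick $z_0\in\Sigma_1$ close to $t$ with $|\Phi(z_0)|=r(z_0)$ and a small circle $\gamma$ around $z_0$ whose closed disk meets $\partial\Sigma_1$ (so it contains $t$ on or inside it). On the arc of $\gamma$ lying in a compact subset of $\Sigma_1$, Theorem \ref{thm3}(a) gives $p_n(z)=\sqrt{n+1}\,[\Phi(z)]^n(\Phi'(z)+O(\delta^n))$, so $|p_n(z)|$ is of exact order $\sqrt{n}\,r(z)^n$; meanwhile at points of $\gamma$ near $t$ (or inside, near $t$) Theorem \ref{thm3}(b) caps the growth at $r(t)^n$ to first order in the exponent. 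Third, I compare $p_n(z)$ with the model function $g_n(z):=\sqrt{n+1}\,[\Phi(z)]^n\Phi'(z)$ on the portion of $\gamma$ inside $\Sigma_1$ and argue that, because $g_n$ grows like $r(z_0)^n$ there while $p_n$ elsewhere on (or inside) the curve is comparatively tiny, the winding numbers of $p_n$ and of $g_n$ along $\gamma$ must differ for large $n$; since $g_n$ has $\Phi$ one-to-one and $\Phi'\neq 0$ it is zero-free in $\Sigma_1$, so the extra zeros of $p_n$ must accumulate inside $\gamma$, hence near $t$. Letting the radius of $\gamma$ shrink gives $t\in\mathcal{Z}$.

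The cleanest way to run the comparison is probably via a normal-families / subsequence argument rather than a literal contour integral: suppose, for contradiction, that some neighborhood $V$ of $t$ contains zeros of only finitely many $p_n$. Then for large $n$ the functions $q_n(z):=p_n(z)/(\sqrt{n+1}\,[\Phi(z_0)]^n)$ (or a suitable local renormalization) are zero-free analytic functions on $V\cap G_1$; on the part of $V$ inside $\Sigma_1$ they are, by Theorem \ref{thm3}(a), uniformly comparable to $[\Phi(z)/\Phi(z_0)]^n\Phi'(z)$, which does \emph{not} tend to $0$ or $\infty$ on a compact piece where $|\Phi|=r(z_0)$, yet at $t$ part (b) forces $\limsup_n |q_n(t)|^{1/n}=r(t)/r(z_0)<1$, i.e.\ decay. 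A zero-free analytic function that stays bounded below in modulus on one relatively open piece of a connected domain but decays geometrically at an interior point contradicts the minimum modulus principle (applied to $1/q_n$) once one extracts a convergent subsequence of appropriately normalized logarithms; this is where the openness and connectedness of a small neighborhood, together with $\Phi$ being one-to-one so that $\{z:|\Phi(z)|=r(z_0)\}$ is a genuine curve separating $z_0$ from $t$, are used.

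I expect the main obstacle to be the case analysis at $t$, i.e.\ pinning down precisely \emph{why} $r(\cdot)$ drops or $\Phi$ becomes discontinuous at a boundary point of $\Sigma_1$, and translating that into a definite lower bound on $|\Phi(z_0)|$ versus $r(t)$ that is uniform enough to feed the minimum-modulus contradiction. Concretely: if $t\in\Sigma_p$ with $p\geq 2$, two or more largest-modulus solutions of $h_\varphi(w)=\varphi(t)$ coincide or collide, so $\Phi$ cannot extend continuously and the two ``branches'' of $p_n$-asymptotics disagree on their phases $[\omega_{z,k}]^n$ — forcing cancellation and hence zeros; if $t\in\Sigma_0$, then $h_\varphi(w)=\varphi(t)$ has no solution with $\mu<|w|<1$, yet nearby it does, so $r(z_0)>\mu=r(t)$ and one has honest geometric decay at $t$. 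Handling the multiplicity/collision case may require the more delicate local representation of $p_n$ coming from the integral formula (\ref{eq2}) (saddle-point with a degenerate or multiple saddle), rather than Theorem \ref{thm3} alone, so I would keep (\ref{eq2}) in reserve for that subcase. The remaining steps — Hurwitz, minimum modulus, shrinking neighborhoods — are routine once the growth dichotomy at $t$ is secured.
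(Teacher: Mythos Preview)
Your overarching strategy --- assume a zero-free neighborhood of $t\in\partial\Sigma_1\cap G_1$, then use Theorem~\ref{thm3} together with a Hurwitz/normal-families extraction to reach a contradiction --- is exactly the route the paper takes (attributed to Ullman~\cite{Ullman2}). The $t\in\Sigma_0$ subcase, where $r(t)=\mu<r(z_0)$ strictly for every nearby $z_0\in\Sigma_1$, is essentially as you describe.

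The genuine gap is the subcase $t\in\Sigma_p$, $p\ge 2$. There $r$ is continuous with $r(t)>\mu$, so there is \emph{no} growth-rate drop between $t$ and nearby points of $\Sigma_1$, and your minimum-modulus contradiction does not fire. Your proposed remedy --- go back to the integral representation~(\ref{eq2}) and extract a multi-term expansion exhibiting phase cancellation among the $\omega_{z,k}^{\,n}$ --- is both unnecessary and hard to make rigorous (turning ``cancellation'' into an honest zero, uniformly in $n$, is delicate). The paper does not touch~(\ref{eq2}) here. Instead it invokes three purely \emph{structural} facts about the sets $\Sigma_p$, the analogues of Ullman's Lemmas~4.1--4.2, which follow directly from the Rouch\'e-type Lemma~\ref{lem1} of this paper: (i) every $z\in\Sigma_p$ has a neighborhood contained in $\bigcup_{q\le p}\Sigma_q$; (ii) each $\Sigma_p$ with $p>1$ has empty interior; (iii) every neighborhood of a point of $\Sigma_p$, $p>1$, meets $G_1\setminus\Sigma_1$. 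With these in hand the argument is clean: a zero-free disk $V\ni t$ yields, via $n$-th roots and normal families, an analytic extension of $\Phi$ to $V$; but by Lemma~\ref{lem1} the local inverses $\omega_1(\zeta),\dots,\omega_s(\zeta)$ of $h_\varphi$ near $t$ are distinct analytic branches, and properties (i)--(iii) force $\Phi$ to coincide with \emph{different} branches on different open pieces of $V\cap\Sigma_1$ (whichever one currently has largest modulus), contradicting the identity theorem. So replace your appeal to~(\ref{eq2}) with these combinatorial properties of the $\Sigma_p$'s; they are the missing ingredient that makes the collision case go through with nothing beyond Theorem~\ref{thm3}.
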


Let now $z_{n,1},\ldots,z_{n,n}$ be the zeros of $p_n$, let $\delta_z$ denote the unit point mass at $z$, and let
\[
\nu_n:=\frac{1}{n}\sum_{j=1}^n\delta_{z_{n,j}}
\]
be the so-called normalized counting measure of the zeros of $p_n$. The sequence $\{\nu_n\}_{n=1}^\infty$ is said to converge in the weak*-topology to the measure $\nu$ if  \[\lim_{n\to\infty}\int_{\mathbb{C}} fd\nu_n=\int_\mathbb{C}fd\nu\] for every continuous function $f:\mathbb{C}\to\mathbb{C}$.

In preparation for the next theorem concerning the weak*-limit points of the sequence $\{\nu_n\}$, we recall that the logarithmic potential of a compactly supported measure $\nu$ is the superharmonic function
\begin{equation*}
U^\nu(z):=-\int_{\mathbb{C}}\log |t-z|d\nu(t), \quad z\in \overline{\mathbb{C}}.
\end{equation*}

Let us extend the function $r(z)$ in (\ref{eq5}) to the entire complex plane by setting $r(z)=|\phi(z)|$ for $z\in\mathbb{C}\setminus G_1$. Then, there exists a unique measure $\lambda$ having logarithmic potential
\begin{equation}\label{eqq200}
U^\lambda(z)=-\log r(z)+\log\phi'(\infty), \quad z\in\mathbb{C}.
\end{equation}
This $\lambda$ is a probability measure whose support coincides with $\partial \Sigma_1\cap G_1$, and we have
\begin{thm}\label{thm2} If the interior of $\Sigma_0$ is connected, then some subsequence of $\{\nu_n\}_{n=1}^\infty$ converges in the weak*-topology to $\lambda$, and this is true of the entire sequence $\{\nu_n\}_{n=1}^\infty$ if the interior of $\Sigma_0$ is empty.
\end{thm}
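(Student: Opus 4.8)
The plan is to apply the standard machinery relating weak* limits of zero-counting measures to logarithmic potentials, using Theorem \ref{thm3}(b) as the source of the potential-theoretic input. First I would fix an arbitrary weak*-convergent subsequence $\nu_{n_k}\to\nu$ (one exists because all the zeros lie in a fixed compact set, namely $\overline{G}_1$, by the fact that $\mathcal{Z}\subset G_1\setminus\Sigma_1$). The polynomials factor as $p_n(z)=\gamma_n\prod_{j}(z-z_{n,j})$ with $\gamma_n>0$ the leading coefficient, so that
\[
\frac{1}{n}\log|p_n(z)|=\frac{1}{n}\log\gamma_n-U^{\nu_n}(z),\quad z\in\mathbb{C}.
\]
The first step is to pin down $\lim_n n^{-1}\log\gamma_n$. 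From Carleman's formula (\ref{eq8}) one reads off $p_n(z)/(\sqrt{n+1}\,\phi(z)^n)\to\phi'(z)$ on $\Omega_1$; comparing leading behavior as $z\to\infty$, and recalling $\phi(z)=\phi'(\infty)z+O(1)$, gives $\gamma_n^{1/n}\to\phi'(\infty)$, i.e. $n^{-1}\log\gamma_n\to\log\phi'(\infty)$. Combining with Theorem \ref{thm3}(b), which states $\limsup_n|p_n(z)|^{1/n}=r(z)$ for $z\in G_1$ (and the analogous statement $|p_n(z)|^{1/n}\to|\phi(z)|=r(z)$ on $\mathbb{C}\setminus\overline{G}_1$ from (\ref{eq8})), we obtain
\[
\limsup_{n\to\infty}\left(-U^{\nu_n}(z)\right)=\log r(z)-\log\phi'(\infty)=-U^\lambda(z)
\]
for every $z$ outside $\overline{G}_1$ and quasi-everywhere, the last equality being exactly (\ref{eqq200}).

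The second step is to upgrade this pointwise $\limsup$ statement to the identification $\nu=\lambda$. The principle of descent gives $U^\nu(z)\leq\liminf_k U^{\nu_{n_k}}(z)$ everywhere, hence $U^\nu(z)\leq U^\lambda(z)$ wherever the $\limsup$ along the subsequence actually realizes $-U^\lambda$; more carefully, one uses that convergence $\nu_{n_k}\to\nu$ forces $U^{\nu_{n_k}}\to U^\nu$ in $L^1_{loc}$ and pointwise quasi-everywhere through a subsequence, so $U^\nu=U^\lambda$ on a set of full capacity — but that is only an inequality a priori from the $\limsup$, and equality of potentials (hence of measures, by unicity of the potential) needs the reverse inequality too. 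This is where the hypothesis on $\Sigma_0$ enters and is, I expect, the main obstacle. The issue is that $\limsup|p_n|^{1/n}=r(z)$ does not by itself prevent the zeros from clustering somewhere inside $\Sigma_0$ in a way that would make $\nu$ charge the interior of $\Sigma_0$; one must show $\operatorname{supp}\nu\subset\partial\Sigma_1\cap G_1=\operatorname{supp}\lambda$. The connectedness of $\operatorname{int}\Sigma_0$ is used via a Hurwitz-type argument: on $\Sigma_0$ we do not have the strong asymptotics of Theorem \ref{thm3}(a), but we can argue that if $\nu$ placed positive mass on an open subset of $\operatorname{int}\Sigma_0$, then $U^\nu<U^\lambda$ strictly on part of that region while $U^\nu=U^\lambda$ q.e. on the unbounded component — and the minimum principle for the superharmonic function $U^\lambda-U^\nu$ on the connected open set $\operatorname{int}\Sigma_0$, combined with its vanishing on the boundary $\partial\Sigma_0\subset\partial\Sigma_1\cup\overline{\Omega}_1$ where both potentials agree, forces $U^\nu=U^\lambda$ throughout. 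Hence $\nu=\lambda$, and since the weak*-limit of every convergent subsequence is the same measure $\lambda$, the full sequence converges when, in addition, $\operatorname{int}\Sigma_0=\emptyset$ (there being then no ambiguity about mass in the interior to begin with).

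The third, more routine, step is to assemble these pieces cleanly: verify that $\lambda$ as defined through (\ref{eqq200}) is genuinely a probability measure supported on $\partial\Sigma_1\cap G_1$ (this should already be recorded in the text preceding the theorem, coming from the fact that $-\log r(z)$ is subharmonic, equals $-\log|\phi(z)|$ harmonic off $\overline{G}_1$, is harmonic on $\Sigma_1$ since $r=|\Phi|$ with $\Phi$ analytic and nonvanishing there, and is harmonic on $\operatorname{int}\Sigma_0$ where $r\equiv\mu$ is constant — so its Riesz measure lives precisely on $\partial\Sigma_1\cap G_1$), and that the mass is $1$ by the normalization $\log\phi'(\infty)$ and the behavior $-\log r(z)+\log\phi'(\infty)=-\log|z|+o(1)$ as $z\to\infty$. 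I would then state the dichotomy exactly as in the theorem: compactness yields a convergent subsequence whose limit must be $\lambda$ by the argument above, giving the first assertion; and when $\operatorname{int}\Sigma_0=\emptyset$ every subsequential limit is $\lambda$, so the whole sequence converges. The delicate point to get right in writing is the q.e.-versus-everywhere bookkeeping for the potentials and making the minimum-principle step rigorous on the possibly unbounded region $\operatorname{int}\Sigma_0$, handling the point at infinity if it belongs to $\Sigma_0$ — but since $\infty\in\Omega_1\subset\overline{\Omega}_1\cup\Sigma_1$ lies outside $\Sigma_0$, that subtlety does not actually arise.
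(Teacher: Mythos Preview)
Your outline assembles the right potential-theoretic ingredients, but there is a genuine gap in the minimum-principle step, and it makes the argument prove too much: if step two really showed that \emph{every} weak* subsequential limit $\nu$ equals $\lambda$ whenever $\operatorname{int}\Sigma_0$ is connected, then the full sequence $\{\nu_n\}$ would converge under that hypothesis alone, whereas the theorem asserts only convergence of \emph{some} subsequence in that case. Concretely, on $\operatorname{int}\Sigma_0$ the potential $U^\lambda$ is harmonic (indeed constant) while $U^\nu$ is superharmonic, so $U^\lambda-U^\nu$ is \emph{sub}harmonic there, not superharmonic as you write; and from the lower-envelope theorem together with Theorem~\ref{thm3} one obtains $U^\nu\geq U^\lambda$ quasi-everywhere, hence $U^\lambda-U^\nu\leq 0$ on $\operatorname{int}\Sigma_0$ with equality on the boundary. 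The maximum principle then only reconfirms $U^\lambda-U^\nu\leq 0$ inside; it does \emph{not} force equality, and nothing prevents an arbitrary subsequential limit from placing mass in $\operatorname{int}\Sigma_0$. (Your sentence ``if $\nu$ placed positive mass \ldots\ then $U^\nu<U^\lambda$'' has the inequality reversed: extra mass makes $U^\nu$ larger, not smaller.)

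The argument the paper defers to (Kuijlaars--Saff \cite{KS}, following Ullman \cite{Ullman1,Ullman2}) repairs this by \emph{choosing} the subsequence first: fix $z_0\in\operatorname{int}\Sigma_0$ and, using $\limsup_n|p_n(z_0)|^{1/n}=r(z_0)$ from Theorem~\ref{thm3}(b), select $n_k$ along which $|p_{n_k}(z_0)|^{1/n_k}\to r(z_0)$, then pass to a weak*-convergent subsequence with limit $\nu$. For this particular $\nu$ the principle of descent yields $U^\nu(z_0)\leq U^\lambda(z_0)$, so the nonnegative superharmonic function $U^\nu-U^\lambda$ attains its minimum $0$ at the interior point $z_0$, and \emph{now} the strong minimum principle on the \emph{connected} set $\operatorname{int}\Sigma_0$ forces $U^\nu\equiv U^\lambda$ there. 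Combined with $U^\nu=U^\lambda$ on $\Sigma_1\cup\overline{\Omega}_1$ (from Theorem~\ref{thm3}(a)) and the fact that each $\Sigma_p$, $p\geq 2$, has empty interior, one concludes $\nu=\lambda$. When $\operatorname{int}\Sigma_0=\emptyset$ no choice of subsequence is needed and your reasoning for that case is essentially correct.
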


\begin{rem}\label{rem3}  Given that, by definition, $h_\varphi(w)=\varphi(\psi(w))$ for $\rho<|w|<1/\rho$, it is easy to verify that
\[
 \mu=\rho \Leftrightarrow G_1=\Sigma_1\cup\Sigma_0 \Leftrightarrow \Sigma_0=\mathbb{C}\setminus\Omega_\rho.
\]
Hence if  $G_1=\Sigma_1\cup\Sigma_0$ and $\Sigma_0=\{z_0\}$ is a singleton,  then $\rho=0$ and the set $G_1$ is an open disk centered at $z_0$, say $G_1=\{z:|z-z_0|<s\}$ for some $s>0$. In this case, 
\[
p_n(z)=\sqrt{n+1}\,s^{-n-1}(z-z_0)^n,\quad n\geq 0,
\]
as can be verified directly from the orthogonality relations (\ref{eq64}), so that $\nu_n=\delta_{z_0}=\lambda$, $n\geq 1$.

If $G_1=\Sigma_1\cup\Sigma_0$,  $\Sigma_0$ not a singleton, then $\rho>0$, $\mathrm{supp}(\lambda)=\partial \Sigma_1\cap G_1 =\partial \Sigma_0$ and by (\ref{eqq200}), $U^\lambda(z)$ is (a finite) constant on $\Sigma_0$. Hence $\lambda$ is the equilibrium distribution with respect to the logarithmic potential of the compact set $\Sigma_0$ (cf. Section III.2 and Theorem III.15 of \cite{Tsuji}) . 
\end{rem}

The proofs of Theorems \ref{thm4} and \ref{thm2} can be accomplished by using a series of arguments previously developed by Ullman \cite{Ullman2} and  Kuijlaars, Saff \cite{KS} in the context of Faber polynomials. These arguments are of a very general nature and can likewise be used in our setting without  any essential modification. We shall therefore provide only an outline of these proofs at the end of Section \ref{sec4}.

For concrete instances of a curve $L_1$, the difficulty of finding the corresponding number $\mu$ and set $\Sigma_1$ may be reduced with the use of the following three propositions. These establish some properties of the meromorphic continuation of the map $\varphi(\psi(w))$. Their use is illustrated in the examples of next section.

For a domain $\mathfrak{D}\subset\overline{\mathbb{C}}$, we denote by $\mathfrak{D}^*$ the reflection of $\mathfrak{D}$ about the unit circle, i.e.,
\[
\mathfrak{D}^*:=\{1/\overline{w}:w\in \mathfrak{D}\}.
\]

\begin{prop}\label{prop1} Let $\varphi$ be a conformal map of $G_1$ onto $\mathbb{D}_1$ and let us denote by the same letter $\varphi$ its meromorphic continuation to $G_{1/\rho}$. Let $\mathfrak{D}$ be a domain such that $\{w:\rho<|w|<1\}\subset\mathfrak{D}\subset \mathbb{D}_1$.

The function $\varphi(\psi(w))$, originally defined in $\rho<|w|<1/\rho$, has a meromorphic continuation to $\mathfrak{D}$, if and only if it has a meromorphic continuation to $\mathfrak{D}^*$, if and only if $\varphi(z)$ has a meromorphic continuation to $\overline{G}_1\cup\psi(\mathfrak{D}^*)$. If $h_\varphi(w)$ denotes the meromorphic continuation of $\varphi(\psi(w))$ to $\mathfrak{D}\cup\mathbb{T}_1\cup \mathfrak{D}^*$, then
\begin{equation}\label{eq32}
h_\varphi(w)=\frac{1}{\overline{h_\varphi(1/\overline{w})}}
\end{equation}
for all $w\in \mathfrak{D}\cup\mathbb{T}_1\cup \mathfrak{D}^*$. In particular, for $\mu$ as in Definition \ref{defn1}, we have
\begin{align*}
&\sup\left\{r\geq 1/\rho: \varphi(\psi(w))\ has\ a\ meromorphic\ continuation\ to\ \rho<|w|<r\right\}\\
&=\sup\left\{r\geq 1/\rho: \varphi\ has\ a\ meromorphic\ continuation\ to\ G_r \right\}\\
&=1/\mu.
\end{align*}
\end{prop}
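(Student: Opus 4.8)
The plan is to establish the chain of equivalences by symmetrizing the functional equation \eqref{eq66}, which relates $\varphi$ to its values at Schwarz-reflected points. First I would record the identity \eqref{eq66} in the $w$-variable: for $\rho<|w|<1/\rho$, applying \eqref{eq70} with $z=\psi(w)$ gives $z^*=\psi(1/\overline{w})$, so that
\[
\varphi(\psi(w))=\frac{1}{\overline{\varphi(\psi(1/\overline{w}))}},\qquad \rho<|w|<1/\rho.
\]
This is the key algebraic fact, and I expect the bulk of the argument to be a careful manipulation of this relation. Writing $F(w):=\varphi(\psi(w))$ for the original function and noting that the map $w\mapsto 1/\overline{w}$ sends $\mathfrak{D}$ to $\mathfrak{D}^*$ and fixes $\mathbb{T}_1$ pointwise, I would observe that if $F$ extends meromorphically to $\mathfrak{D}$, then the function $w\mapsto 1/\overline{F(1/\overline{w})}$ is meromorphic on $\mathfrak{D}^*$ (antiholomorphic composed with antiholomorphic is holomorphic, and reciprocals of meromorphic functions are meromorphic), and by the displayed identity it agrees with $F$ on the overlap $\rho<|w|<1$; hence it provides the meromorphic continuation of $F$ to $\mathfrak{D}^*$. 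The reverse implication is symmetric since $(\mathfrak{D}^*)^*=\mathfrak{D}$. This simultaneously yields \eqref{eq32} on $\mathfrak{D}\cup\mathbb{T}_1\cup\mathfrak{D}^*$, because the two meromorphic functions in \eqref{eq32} agree on the annulus $\rho<|w|<1$ and hence everywhere on the connected set where both are defined (one should check $\mathfrak{D}\cup\mathbb{T}_1\cup\mathfrak{D}^*$ is a domain, which follows from $\mathbb{T}_1\subset\overline{\mathfrak{D}}\cap\overline{\mathfrak{D}^*}$ and $\mathbb{T}_1$ being in the interior of the union since $\rho<|w|<1/\rho$ sits inside $\mathfrak{D}\cup\mathfrak{D}^*$).

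Next I would connect this to the meromorphic continuation of $\varphi$ itself. Since $\psi$ is univalent on $\Delta_\rho\supset\mathbb{T}_1$ and maps $\mathfrak{D}^*\subset\Delta_\rho$ (note $\mathfrak{D}^*\subset\{|w|>1\}$, and one needs $\mathfrak{D}\subset\mathbb{D}_1$ to force $\rho<|w|$ on $\mathfrak{D}$, hence $\mathfrak{D}^*\subset\{1<|w|<1/\rho\}\subset\Delta_\rho$) conformally onto the domain $\psi(\mathfrak{D}^*)\subset\Omega_\rho$, a meromorphic continuation of $F=\varphi\circ\psi$ to $\mathfrak{D}^*$ is equivalent, via the conformal change of variable $z=\psi(w)$, to a meromorphic continuation of $\varphi$ to $\psi(\mathfrak{D}^*)$. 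Combined with the fact that $\varphi$ is already meromorphic (indeed analytic) on $\overline{G}_1$ and that $\overline{G}_1$ and $\psi(\mathfrak{D}^*)$ share the boundary arc $L_1$ across which the two representations match (again by \eqref{eq66}), this gives the equivalence with meromorphic continuability of $\varphi$ to $\overline{G}_1\cup\psi(\mathfrak{D}^*)$. Care is needed to ensure that the two local continuations glue into a single-valued meromorphic function on the union; this is where I would invoke that $L_1$ is an analytic Jordan curve, so $\overline{G}_1\cup\psi(\mathfrak{D}^*)$ is a neighborhood of $\overline{G}_1$ in which the gluing is across the analytic arc $L_1$, and the identity theorem forces agreement.

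Finally, for the displayed chain of three suprema, I would apply the equivalence just proved with the specific family $\mathfrak{D}=\mathfrak{D}_r:=\{\rho<|w|<r\}\cap\mathbb{D}_1=\{\rho<|w|<1\}$ for $1\le r<1/\rho$ — more precisely, with $\mathfrak{D}$ chosen so that $\mathfrak{D}^*=\{1<|w|<r\}$, i.e. $\mathfrak{D}=\{1/r<|w|<1\}$; then $\mathfrak{D}\cup\mathbb{T}_1\cup\mathfrak{D}^*=\{1/r<|w|<r\}$ and $\psi(\mathfrak{D}^*)=\Omega_{1/r}\cap G_r$-type region, giving $\varphi$ meromorphic on $\overline{G}_1\cup(\Omega_1\cap G_r)$, which (since $\varphi$ is already meromorphic on $G_1$) is the same as $\varphi$ being meromorphic on $G_r$. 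Taking the supremum over all such $r$ and matching against Definition \ref{defn1} — where $\mu$ is defined via continuation to $\{\mu<|w|<1/\rho\}$, and the symmetry \eqref{eq32} lets one pass freely between continuation past $\mathbb{T}_1$ on the outside versus inside — yields $1/\mu$ for all three quantities. The main obstacle I anticipate is not any single deep step but the bookkeeping of domains: verifying that $\mathfrak{D}^*\subset\Delta_\rho$ under the stated hypotheses, that the relevant unions are genuine domains so the identity theorem applies, and that the gluing across $L_1$ produces single-valued meromorphic continuations rather than multivalued ones — all of which ultimately rest on the analyticity of $L_1$ and the univalence of $\psi$ on $\Delta_\rho$.
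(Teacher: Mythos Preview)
Your proposal is correct and follows essentially the same route as the paper: derive the reflection identity $\varphi(\psi(w))=1/\overline{\varphi(\psi(1/\overline w))}$ from \eqref{eq66}--\eqref{eq70}, use it to pass between $\mathfrak{D}$ and $\mathfrak{D}^*$, and then transfer continuation of $\varphi\circ\psi$ on $\mathfrak{D}^*$ to continuation of $\varphi$ on $\psi(\mathfrak{D}^*)$ via the univalent change of variable $z=\psi(w)$. One minor sharpening: the gluing of the continuation of $\varphi$ with its known values is not across the arc $L_1$ but on the open overlap $\Omega_1\cap G_{1/\rho}\subset\psi(\mathfrak{D}^*)$ (since $\varphi$ is already meromorphic on all of $G_{1/\rho}$, not just $\overline{G}_1$), so no boundary-matching argument is needed---the identity theorem applies directly on this open set, exactly as the paper does.
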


The next proposition tells us that if $\mathfrak{D}\subset \mathbb{D}_1$ is a domain that can be exhausted by continuously expanding domains $\mathfrak{D}_t$, each satisfying that $\psi(\overline{\mathfrak{D}}_t)\subset \overline{G}_1\cup \psi(\mathfrak{D}^*_t)$, then $\varphi(\psi(w))$ has a meromorphic continuation to $\mathfrak{D}$. The precise formulation is as follows.

\begin{prop} \label{prop6} Let $\{\mathfrak{D}_t:a\leq t<  b\}$ be a collection of domains  such that for every $a\leq t_0<t_1<  b$,
\begin{equation}\label{eqq32}
\{w:\rho<|w|<1\}\subset \mathfrak{D}_{t_0}\subset \mathfrak{D}_{t_1}\subset \mathbb{D}_1,\quad \bigcap_{t>t_0}\mathfrak{D}_t = \overline{\mathfrak{D}}_{t_0}\setminus\mathbb{T}_1.
\end{equation}
Let $\mathfrak{D}:=\cup_{a\leq t< b}\mathfrak{D}_{t}$ and suppose that $\psi$ is meromorphic in $\mathfrak{D}$ and satisfies
\begin{equation}\label{eqq31}
\psi(\mathfrak{D}_a)\subset G_1, \quad
\psi(\overline{\mathfrak{D}}_t)\subset \overline{G}_1\cup \psi(\mathfrak{D}^*_t),\quad a< t<  b.
\end{equation}
Then, $\varphi(\psi(w))$ admits a meromorphic continuation to $ \mathfrak{D}$.
\end{prop}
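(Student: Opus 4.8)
The plan is to deduce Proposition~\ref{prop6} from Proposition~\ref{prop1} via a continuity argument in $t$. Because $\{w:\rho<|w|<1\}\subset\mathfrak D_t\subset\mathbb D_1$ for each $t$, Proposition~\ref{prop1} gives that $\varphi(\psi(w))$ continues meromorphically to $\mathfrak D_t$ exactly when $\varphi$ continues meromorphically to $\overline G_1\cup\psi(\mathfrak D_t^*)$; and since two such continuations necessarily agree on the connected annulus $\{w:\rho<|w|<1\}$, once they exist for every $t\in[a,b)$ they glue into a single continuation of $\varphi(\psi(w))$ on $\mathfrak D=\bigcup_t\mathfrak D_t$. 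So it is enough to show that
\[
A:=\{t\in[a,b):\varphi(\psi(w))\ \text{continues meromorphically to}\ \mathfrak D_t\}=[a,b).
\]
We have $a\in A$: since $\psi(\mathfrak D_a)\subset G_1$ by \eqref{eqq31} and $\varphi$ is analytic on $G_1$, the composition $\varphi\circ\psi$ is analytic on $\mathfrak D_a$ and agrees with $\varphi(\psi(w))$ on $\{w:\rho<|w|<1\}$. Also $A$ is downward closed, so $A=[a,T)$ or $[a,T]$ with $T=\sup A$.

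It remains to exclude $T<b$, and this rests on a bootstrap: if $\varphi(\psi(w))$ continues meromorphically to $\mathfrak D_{t_0}$ for some $t_0<b$, then it continues to $\mathfrak D_{s_0}$ for some $s_0\in(t_0,b)$. By Proposition~\ref{prop1} the continuation of $\varphi$ is defined on the open set $V:=\overline G_1\cup\psi(\mathfrak D_{t_0}^*)$ (open because $\psi(\mathfrak D_{t_0}^*)$ contains the exterior collar $\psi(\{w:1<|w|<1/\rho\})$ of $L_1$). Using the inclusion $\psi(\overline{\mathfrak D}_{t_0})\subset\overline G_1\cup\psi(\mathfrak D_{t_0}^*)$ from \eqref{eqq31} (trivial for $t_0=a$, where $\psi(\overline{\mathfrak D}_a)\subset\overline{\psi(\mathfrak D_a)}\subset\overline G_1$) together with the reflection identity \eqref{eq32}—which propagates the domain of $\varphi$ across $\psi(\overline{\mathfrak D}_{t_0}^*\setminus\mathbb T_1)$—one finds that $\varphi\circ\psi$ is meromorphic on an open neighborhood $N$ of the compact set $\overline{\mathfrak D}_{t_0}$; here $\psi$ is meromorphic near $\overline{\mathfrak D}_{t_0}$ because $\overline{\mathfrak D}_{t_0}\setminus\mathbb T_1=\bigcap_{s>t_0}\mathfrak D_s\subset\mathfrak D$ by \eqref{eqq32} and $\psi$ is analytic on the neighborhood $\Delta_\rho$ of $\mathbb T_1$. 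Now comes the topological input: if a point $p$ belonged to $\overline{\mathfrak D}_s\setminus N$ for every $s>t_0$, then by \eqref{eqq31} and \eqref{eqq32} one would have $\psi(p)\in\bigcap_{s>t_0}\psi(\overline{\mathfrak D}_s)\subset\overline G_1\cup\psi\big((\overline{\mathfrak D}_{t_0}\setminus\mathbb T_1)^*\big)$, a set on which $\varphi$ is meromorphic, so $\varphi\circ\psi$ would be meromorphic near $p$ and $p\in N$—a contradiction. Hence the decreasing family of compact sets $\overline{\mathfrak D}_s\setminus N$, $s>t_0$, has empty intersection, so $\overline{\mathfrak D}_{s_0}\subset N$ for some $s_0>t_0$, whence $s_0\in A$. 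This shows $A$ has no largest element below $b$; and if $T<b$ one runs the same argument starting from the glued continuation on $\bigcup_{t<T}\mathfrak D_t$, using \eqref{eqq31} now at $t=T$ and iterating \eqref{eq32}, to reach a continuation past $T$, contradicting $T=\sup A$. Therefore $A=[a,b)$.

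I expect the main obstacle to be this last step—the passage past $T=\sup A$. The difficulty is that \eqref{eqq32} only controls the family from above, so $\mathfrak D_T$ can be strictly larger than $\bigcup_{t<T}\mathfrak D_t$; and since the set $\psi(\mathfrak D_t^*)$ on which $\varphi$ must be shown meromorphic appears on both sides of \eqref{eqq31}, the domain of $\varphi$ cannot be extended past $T$ in one stroke. One has to spread it across the ``new'' portion of $\psi(\mathfrak D_T^*)$ by repeatedly applying the reflection formula \eqref{eq32}, and verifying that this process terminates is exactly where the compactness of $\overline{\mathfrak D}_T$ and the from-above continuity in \eqref{eqq32} must be combined. (Single-valuedness of the meromorphic continuations is not a separate concern, being already built into Proposition~\ref{prop1}.)
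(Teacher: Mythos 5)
Your proposal follows the same route as the paper's proof: a ``continuous induction'' on $t$ whose engine is the bootstrap ``meromorphy of $\varphi\circ\psi$ on $\mathfrak{D}_{t_0}$ $\Rightarrow$ (Proposition~\ref{prop1}) meromorphy of $\varphi$ on $\overline{G}_1\cup\psi(\mathfrak{D}_{t_0}^*)$ $\Rightarrow$ (\eqref{eqq31} plus the from-above continuity in \eqref{eqq32}) meromorphy of $\varphi\circ\psi$ on some $\mathfrak{D}_{t_1}$ with $t_1>t_0$.'' Your compactness justification of ``some $\overline{\mathfrak{D}}_{s_0}\subset N$'' fills in a step the paper merely asserts, although the detour through $\psi$ and the claim that $\varphi$ is meromorphic on $\overline{G}_1\cup\psi\big((\overline{\mathfrak{D}}_{t_0}\setminus\mathbb{T}_1)^*\big)$ is both unnecessary and not quite justified (that set can contain images of boundary points of $\mathfrak{D}_{t_0}^*$, where meromorphy of $\varphi$ is not known). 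It is simpler to argue directly that $\bigcap_{s>t_0}\overline{\mathfrak{D}}_s\subset\overline{\mathfrak{D}}_{t_0}$: if $p\in\overline{\mathfrak{D}}_s$ for every $s>t_0$, then either $p\in\mathbb{T}_1\subset\overline{\mathfrak{D}}_{t_0}$, or $p\notin\mathbb{T}_1$, in which case $p\in\overline{\mathfrak{D}}_{s'}\setminus\mathbb{T}_1=\bigcap_{u>s'}\mathfrak{D}_u$ for every $s'>t_0$, hence $p\in\bigcap_{u>t_0}\mathfrak{D}_u=\overline{\mathfrak{D}}_{t_0}\setminus\mathbb{T}_1$; either way $p\in N$, so the decreasing compacts $\overline{\mathfrak{D}}_s\setminus N$ have empty intersection.

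The obstacle you flag---passing $T=\sup A$ when $\mathfrak{D}_T$ may strictly exceed $\bigcup_{t<T}\mathfrak{D}_t$---is a genuine one, and your write-up does not close it; but you should know that the paper's proof does not close it either. The paper takes $t_0$ to be the largest number such that the continuation exists on every $\mathfrak{D}_t$ with $t<t_0$ (such a largest number exists because this property is inherited by suprema) and then asserts that $\varphi\circ\psi$ ``is meromorphic in $\mathfrak{D}_{t_0}$,'' which is exactly the left-continuity issue you raise: hypothesis \eqref{eqq32} controls the family only from above, so nothing forbids $\mathfrak{D}_{t_0}\supsetneq\bigcup_{t<t_0}\mathfrak{D}_t$, and \eqref{eqq31} at $t=t_0$ is self-referential. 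If one adds the (clearly intended) left-continuity $\mathfrak{D}_{t_0}=\bigcup_{t<t_0}\mathfrak{D}_t$ at limit parameters---which holds for the explicit family used in the proof of Proposition~\ref{prop8}, the only place Proposition~\ref{prop6} is applied---then your bootstrap, run from the glued continuation on $\bigcup_{t<t_0}\mathfrak{D}_t$, completes the induction. In short: same approach as the paper, correct where the paper is detailed, and incomplete precisely where the paper is silent; your proposed ``iterate the reflection \eqref{eq32} across the new portion'' is not needed once left-continuity is granted, and without it I do not see how either argument (yours or the paper's) goes through.
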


\begin{rem} Note that for a domain $\mathfrak{D}$ as in Proposition \ref{prop6}, the meromorphic continuation of $\varphi(\psi(w))$ to $ \mathfrak{D}\cup\mathbb{T}_1\cup \mathfrak{D}^*$ is likewise the composition of two meromorphic functions, since by Proposition \ref{prop1}, $\varphi$ is meromorphic in $\overline{G}_1\cup\psi(\mathfrak{D}^*)$ and obviously $\psi( \mathfrak{D}\cup\mathbb{T}_1\cup \mathfrak{D}^*)\subset \overline{G}_1\cup\psi(\mathfrak{D}^*)$.
\end{rem}

Let $\rho_a\geq 0$ be the smallest number such that $\psi$ has an analytic continuation to
$\rho_a<|w|<\infty$, and let $\bar{\mu}\in [\rho_a, 1)$ be a number that has been fixed.  Suppose $z\in G_1$ and that the equation $\psi(w)=z$ has  no solutions in $\bar{\mu}<|w|<1$. In this case we assign $z\in C^{\bar{\mu}}_0$. Otherwise, the equation $\psi(w)=z$ has finitely many solutions of largest modulus, say  $v_{z,1},  \ldots,v_{z,s}$ ($s\geq 1$),  in $\bar{\mu}<|w|<1$ . Let $\beta_{z,k}$ be the multiplicity of $\psi$ at $v_{z,k}$.

For every integer $p\geq 1$, we define the subset $C^{\bar{\mu}}_p$ of $G_1$ by the relation
\begin{equation*}
z\in C^{\bar{\mu}}_p \Leftrightarrow \beta_{z,1}+\cdots+ \beta_{z,s}=p.
\end{equation*}

Finally, for $r\in [\rho_a,\infty)$, we define
\begin{equation*}
L_r:=\left\{z=\psi(w):|w|=r\right\}.
\end{equation*}
Note that for $r>\rho$, this definition of $L_r$ is equivalent to that given in (\ref{eq30}).

\begin{prop}\label{prop2} Suppose $\bar{\mu}$ is a number satisfying that $\rho_a\leq \bar{\mu}<\rho$ and having the property that $L_r\subset G_{1/r}$ for all $\bar{\mu}<r<1$. Then, $\mu\leq \bar{\mu}$, $\Sigma_p\supset C^{\bar{\mu}}_p$ for all $p\geq 1$, and
\[
\Phi(z)=v_{z,1},\quad z\in C^{\bar{\mu}}_1.
\]
Moreover, if $\mu=\bar{\mu}$, then $\Sigma_p=C^{\bar{\mu}}_p$ for all $p\geq 0$.

Suppose, in addition, that  there is a sequence $\{w_n\}_{n\geq 1}$, $\bar{\mu}<|w_n|<1$, such that
\begin{equation}\label{eq35}
\psi(w_{n+1})=\psi(1/\overline{w_n}),\quad n\geq 1.
\end{equation}
Then, $|w_n|>|w_{n+1}|$  and $\mu=\bar{\mu}=\lim_{n\to\infty}|w_n|$.
\end{prop}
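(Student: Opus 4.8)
The plan is to deduce Proposition~\ref{prop2} from the two meromorphic–continuation principles, Propositions~\ref{prop1} and~\ref{prop6}, using the geometric hypothesis $L_r\subset G_{1/r}$ at each stage. For the inequality $\mu\le\bar\mu$ I would apply Proposition~\ref{prop6} to the nested family $\mathfrak{D}_t:=\{w:\rho_t<|w|<1\}$, $0\le t<1$, where $\rho_t:=\rho-t(\rho-\bar\mu)$ decreases from $\rho$ to $\bar\mu$; then $\mathfrak{D}:=\bigcup_{0\le t<1}\mathfrak{D}_t=\{w:\bar\mu<|w|<1\}$ and $\overline{G}_1\cup\psi(\mathfrak{D}_t^*)=G_{1/\rho_t}$. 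Conditions~(\ref{eqq32}) are clear for this family (and $\rho_t\le\rho$ gives $\{\rho<|w|<1\}\subset\mathfrak{D}_t$); $\psi$ is analytic on $\mathfrak{D}$ since $\bar\mu\ge\rho_a$, and $\psi(\mathfrak{D}_0)=\Omega_\rho\cap G_1\subset G_1$. The only part of~(\ref{eqq31}) needing work is $\psi(\overline{\mathfrak{D}}_t)\subset G_{1/\rho_t}$: here $\psi(\{\rho\le|w|\le1\})=\overline{\Omega_\rho\cap G_1}\subset\overline{G}_1\subset G_{1/\rho_t}$, while for each $r\in[\rho_t,\rho]$ (note $\rho_t>\bar\mu$ since $t<1$) the hypothesis gives $L_r\subset G_{1/r}\subset G_{1/\rho_t}$, so $\psi(\{\rho_t\le|w|\le\rho\})=\bigcup_{\rho_t\le r\le\rho}L_r\subset G_{1/\rho_t}$. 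Proposition~\ref{prop6} then produces a meromorphic continuation of $\varphi(\psi(w))$ to $\mathfrak{D}$, which glued to its original domain $\{\rho<|w|<1/\rho\}$ gives one to $\{\bar\mu<|w|<1/\rho\}$; by Definition~\ref{defn1}, $\mu\le\bar\mu$.

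For the assertions about $\Sigma_p$ and $\Phi$, the point is to match solutions of~(\ref{eq4}) with $\psi$-preimages. By the Remark following Proposition~\ref{prop6}, $h_\varphi(w)=\varphi(\psi(w))$ on $\mathfrak{D}=\{w:\bar\mu<|w|<1\}$, where $\varphi$ denotes its meromorphic continuation to $\overline{G}_1\cup\psi(\mathfrak{D}^*)=G_{1/\bar\mu}$. Hence any $\psi$-preimage $v\in\mathfrak{D}$ of a point $z\in G_1$ solves~(\ref{eq4}), and since $\varphi'\ne0$ on $G_1\ni z$, the order of vanishing of $h_\varphi(w)-\varphi(z)$ at $v$ equals that of $\psi(w)-z$, i.e.\ the $h_\varphi$-multiplicity of $v$ equals its $\psi$-multiplicity. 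Conversely, a solution $\omega$ of~(\ref{eq4}) with $|\omega|>\bar\mu$ satisfies $\varphi(\psi(\omega))=\varphi(z)$, so \emph{provided $\varphi$ is injective on $G_{1/\bar\mu}$} one gets $\psi(\omega)=z$. Now take $z\in C_p^{\bar\mu}$: its largest-modulus $\psi$-preimages $v_{z,1},\dots,v_{z,s}$ lie in $\mathfrak{D}$, are solutions of~(\ref{eq4}) with $|v_{z,1}|>\bar\mu\ge\mu$, so $z\in\Sigma$; hence the largest-modulus solutions of~(\ref{eq4}) in $\mu<|w|<1$ have modulus $\ge|v_{z,1}|>\bar\mu$, and by the converse they are $\psi$-preimages of $z$, so they coincide with $v_{z,1},\dots,v_{z,s}$ with multiplicities $\beta_{z,1},\dots,\beta_{z,s}$; thus $z\in\Sigma_p$, and $\Phi(z)=v_{z,1}$ when $p=1$. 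If $\mu=\bar\mu$ then \emph{every} solution of~(\ref{eq4}) in $\mu<|w|<1$ lies in $\mathfrak{D}$, hence is a $\psi$-preimage of $z$, which yields the reverse inclusions $\Sigma_p\subset C_p^{\bar\mu}$ ($p\ge1$) and, by complementation in $G_1$, also $\Sigma_0=C_0^{\bar\mu}$.

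For the last assertion, put $z_n:=\psi(w_n)$, so $z_{n+1}=\psi(1/\overline{w_n})$ lies both on $L_{1/|w_n|}\subset\overline{\Omega}_{1/|w_n|}$ and, by the hypothesis applied with $r=|w_{n+1}|\in(\bar\mu,1)$, on $L_{|w_{n+1}|}\subset G_{1/|w_{n+1}|}=\overline{\mathbb{C}}\setminus\overline{\Omega}_{1/|w_{n+1}|}$. Hence $\overline{\Omega}_{1/|w_n|}\not\subset\overline{\Omega}_{1/|w_{n+1}|}$, and since $\psi$ is univalent on $\Delta_\rho\supset\{|w|>1\}$ this forces $1/|w_n|<1/|w_{n+1}|$, i.e.\ $|w_{n+1}|<|w_n|$. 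So $\ell:=\lim_n|w_n|\ge\bar\mu$ exists; moreover $z_n\in\Omega_1$ for $n\ge2$ (it equals $\psi(1/\overline{w_{n-1}})$ with $|1/\overline{w_{n-1}}|>1$), forcing $|w_n|\le\rho$ lest $z_n\in\Omega_\rho\cap G_1\subset G_1$, so $\ell\le\rho$. It remains to show $\mu\ge\ell$. By~(\ref{eq32}) (valid on $\{w:\bar\mu<|w|<1/\bar\mu\}$ by Proposition~\ref{prop1}), $\varphi(z_{n+1})=h_\varphi(1/\overline{w_n})=1/\overline{h_\varphi(w_n)}=1/\overline{\varphi(z_n)}$, hence $\varphi(z_{n+2})=\varphi(z_n)$, so $\varphi(z_{n+1})$ equals a constant $c$ along indices of one fixed parity. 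Passing to a subsequence with $1/\overline{w_n}\to\zeta\in\mathbb{T}_{1/\ell}$ (where $\psi$ is analytic, as $1/\ell\ge1/\rho>\rho_a$), we get $z_{n+1}\to\psi(\zeta)\in L_{1/\ell}$ with $\varphi(z_{n+1})\equiv c$; were $\varphi$ meromorphically continuable past $L_{1/\ell}$ it would be holomorphic near $\psi(\zeta)$ and equal $c$ on a sequence accumulating there, forcing $\varphi\equiv c$, which is absurd. By Proposition~\ref{prop1}, $1/\mu\le1/\ell$, i.e.\ $\mu\ge\ell$; with $\mu\le\bar\mu\le\ell$ this gives $\mu=\bar\mu=\ell=\lim_n|w_n|$.

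The step I expect to be hardest is the injectivity of the meromorphic continuation of $\varphi$ to $G_{1/\bar\mu}$ used in the second paragraph, equivalently the assertion $|\varphi|>1$ on $G_{1/\bar\mu}\setminus\overline{G}_1=\psi(\{w:1<|w|<1/\bar\mu\})$ (without it, a priori there could be a largest-modulus solution of~(\ref{eq4}) not of the form $\phi(z)$). I would extract it from the Schwarz reflection identity~(\ref{eq66})/(\ref{eq32}), which rewrites $\varphi(z)=1/\overline{\varphi(z^*)}$ with $z^*\in\psi(\{w:\bar\mu<|w|<1\})$, together with the containment $L_r\subset G_{1/r}$, which keeps the successive reflections of such a $z$ inside the region where $|\varphi|<1$ is already known, so that ruling out an infinite reflection orbit closes the argument; for the equality $\Sigma_p=C_p^{\bar\mu}$ in the borderline case $\mu=\bar\mu$ one may instead prove injectivity for $\bar\mu'>\bar\mu$ and let $\bar\mu'\downarrow\bar\mu$.
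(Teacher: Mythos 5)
Your first paragraph (the expanding annuli fed into Proposition~\ref{prop6}) and your last paragraph (strict monotonicity of $|w_n|$, followed by an identity-theorem argument that pins down $\mu$) are essentially the paper's own arguments and are sound. The problem is the second paragraph, and you have located it yourself: the identification of the largest-modulus solutions of (\ref{eq4}) with $\psi$-preimages of $z$ is made to rest on the injectivity of the continued $\varphi$ on $G_{1/\bar{\mu}}$, equivalently on $|\varphi|>1$ throughout $G_{1/\bar{\mu}}\setminus\overline{G}_1$. This is not only left unproved; it is false in general, and in particular in the paper's second example, where Proposition~\ref{prop2} is actually applied. Indeed, for $w$ with $\bar{\mu}<|w|<1$ and $\psi(w)\notin\overline{G}_1$, iterate the reflection $w\mapsto 1/\overline{u}$, where $\psi(u)=\psi(w)$ and $|u|>1$: the moduli increase strictly, $h_\varphi$ is replaced by $1/\overline{h_\varphi(\,\cdot\,)}$ at each step, and the orbit terminates at the first point whose $\psi$-image lies in $\overline{G}_1$. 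Hence $|h_\varphi(w)|=|\varphi(\psi(w))|$ is $<1$ or $>1$ according to the \emph{parity} of the number of steps. For the curve of Section 3.2 both parities occur (take $w$ real slightly above $\mu$ and count how many iterates of $v\mapsto R-1/v$ are needed before $\psi(v)$ enters $\overline{G}_1$; the count runs through all integers as $w\downarrow\mu$), so there are points of $G_{1/\mu}\setminus\overline{G}_1$ at which $|\varphi|<1$, and $\varphi$ is not injective on $G_{1/\mu}$. Consequently your ``converse'' --- that every solution $\omega$ of (\ref{eq4}) with $|\omega|>\bar{\mu}$ satisfies $\psi(\omega)=z$ --- genuinely fails: there are solutions that are not $\psi$-preimages of $z$.

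What rescues the argument, and what the paper proves instead, is a weaker claim that suffices because one only cares about solutions of \emph{largest} modulus: if $h_\varphi(w_z)=\varphi(z)$ with $\bar{\mu}<|w_z|<1$ and $\psi(w_z)\neq z$, then $\psi(w_z)\notin\overline{G}_1$ (else injectivity of $\varphi$ on $G_1$, or $|\varphi|=1$ on $L_1$, gives a contradiction), and performing the reflection \emph{twice} produces $w''_z$ with $|w_z|<|w''_z|<1$ and $h_\varphi(w''_z)=\varphi(z)$; the intermediate point carries the value $1/\overline{\varphi(z)}$ of modulus $>1$, which is precisely what forces $\psi(w'_z)\notin\overline{G}_1$ and permits the second reflection. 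Thus any solution that is not a $\psi$-preimage of $z$ is dominated by a solution of strictly larger modulus, so the largest-modulus solutions of (\ref{eq4}) coincide exactly with the largest-modulus $\psi$-preimages, with matching multiplicities by your (correct) $\varphi'\neq 0$ observation. This one lemma yields $C^{\bar{\mu}}_p=\{z\in\Sigma_p:r(z)>\bar{\mu}\}$ and hence all the set-theoretic assertions, including the borderline case $\mu=\bar{\mu}$ without any limiting argument in $\bar{\mu}'$. You should replace the injectivity step by this two-step reflection lemma.
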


\section{Examples}\label{sec3}

Two well-known sequences of polynomials orthogonal over the interior of an analytic Jordan curve are those corresponding  to $L_1$ a circle and $L_1$ an ellipse.  In both instances, the orthogonal polynomials  can be computed explicitly. The case of $L_1$ a circle is quite trivial and has been already discussed in Remark \ref{rem3} above. More interesting is the situation where $L_1$ is an ellipse, which without loss of generality can be assumed to have foci at $-1$ and $1$. Here (see, e.g., \cite[pp. 258-259]{Nehari}) $p_n$ is, up to a multiplicative constant, the $n$th Tchebichef polynomial of the second kind.

These examples are, however, of little relevance to us because in both of them $\Sigma_1=\Omega_\rho\cap G_1$, so that Theorem \ref{thm3}(a) reduces to the original result (\ref{eq8}) of Carleman.  We now provide two examples in which $\Sigma_1$ is actually larger than $\Omega_\rho\cap G_1$. In particular, we shall see that the inequalities   $\rho_a<\mu<\rho$ and $\mu<\rho_a$ are both possible.

\subsection{Cassini ovals}
Let $0<R<1$ be a number that has been fixed. The set $|z^2-1|=R$ consists of two disjoint analytic Jordan curves known as Cassini ovals.  One surrounds $1$, the other $-1$. Of these two, let us denote by $L_1$ the one encircling the point $1$.

Observe that the function
\[
\varphi(z):=(z^2-1)/R
\]
conformally maps $G_1$ (the interior of $L_1$) onto the unit disk. Given that $\varphi$ is an entire function, we have in view of Propositon \ref{prop1} that
\[
\mu=0.
\]

\begin{prop} \label{prop3} Let $a$ be the unique solution that the equation
\begin{equation*}
27x^4-18x^2-4\left(R+R^{-1}\right)x-1=0
\end{equation*}
has in the interval $(-1/3,0)$. Then
\begin{equation} \label{eq109}
h_\varphi(w)=\frac{(1-aw)w^2}{w-a},\quad w\in \overline{\mathbb{C}},
\end{equation}
\begin{equation}\label{Eq18}
\psi(w)=\sqrt{1+\frac{R(1-aw)w^2}{w-a}},\quad |w|>1,
\end{equation}
where the branch of the square root in (\ref{Eq18}) is chosen so that $\psi(1/a)=-1$.
\end{prop}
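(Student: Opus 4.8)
The plan is to identify the rational function $h_\varphi$ by combining the general principles of Proposition~\ref{prop1} with the reflection identity (\ref{eq32}) and the behaviour of $\psi$ at $\infty$, and then to \emph{verify} the result by checking that the function prescribed in (\ref{Eq18}) really is the canonical conformal map of $\Delta_1$ onto $\Omega_1$. Since $\mu=0$ (as noted, because $\varphi(z)=(z^2-1)/R$ is entire), $\varphi(\psi(w))$ already continues meromorphically across $\mathbb{T}_1$; combined with $\varphi(\psi(w))\sim\psi'(\infty)^2 w^2/R$ (a double pole) at $w=\infty$ and, by Proposition~\ref{prop1}, the identity (\ref{eq32}) forcing a double zero at $w=0$, this makes $h_\varphi$ a \emph{rational} function on $\overline{\mathbb{C}}$. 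Moreover $L_1$ is symmetric about $\mathbb{R}$, so $\psi$ --- hence $h_\varphi=\varphi\circ\psi$ --- has real Laurent coefficients, and the divisor of $h_\varphi$ is invariant under both $w\mapsto 1/w$ and $w\mapsto\overline{w}$. Finally $\psi=\sqrt{1+Rh_\varphi}$ must be single-valued on $\Delta_1$; using that $|(1-aw)/(w-a)|=1$ on $\mathbb{T}_1$ for real $a$, these requirements point to the candidate
\[
h_\varphi(w)=\frac{(1-aw)\,w^2}{w-a},\qquad a\in\mathbb{R},\ 0<|a|<1,
\]
the extra simple pole at $w=a$ matched by the extra simple zero at $w=1/a$ (the only other low-degree possibility, $h_\varphi(w)=w^2$, is easily seen not to occur here; higher-degree candidates will be excluded a posteriori).

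To pin down $a$, write $1+Rh_\varphi(w)=P(w)/(w-a)$ with the cubic
\[
P(w):=(w-a)+Rw^2(1-aw)=-Raw^3+Rw^2+w-a .
\]
Since $w-a$ has no zero on $\Delta_1$ and $P$ has a pole of order $2$ at $\infty$, single-valuedness of $\psi=\sqrt{P(w)/(w-a)}$ on the simply connected domain $\Delta_1$ forces $P$ to have no simple zero there; as the product of the three roots of $P$ equals $-1/R$, of modulus $>1$, at least one root lies in $\Delta_1$, and the only way to avoid a simple zero there is for $P$ to have a (necessarily real) double root of modulus $>1$ together with a simple root of modulus $<1$. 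Hence $\operatorname{disc}P=0$, which after a direct computation reads
\[
27a^4-18a^2-4\bigl(R+R^{-1}\bigr)a-1=0 .
\]
Writing $q(x)$ for the left-hand side, $q(0)=-1<0$ and $q(-1/3)=\tfrac43\bigl(R+R^{-1}-2\bigr)>0$, while on $(-1/3,0)$ one checks $0<108x^3-36x<8\le 4(R+R^{-1})$, so that $q'<0$ there; thus $q$ has a unique root $a\in(-1/3,0)$. A short sign analysis of $P$ (equivalently, a perturbation from the limiting case $R\to 1$, where $P$ degenerates to $\tfrac13(w+1)^3$) then confirms that for this $a$ the double root of $P$ lies in $(-\infty,-1)$ and the simple root in $(-1,0)$.

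It remains to verify that $\psi(w):=\sqrt{1+Rh_\varphi(w)}$, with the branch singled out by $\psi(1/a)=-1$ (which is forced, since $\psi$ must map $\Delta_1$ into $\Omega_1$ while $h_\varphi(1/a)=0$), is the canonical exterior map. The root location just found makes $1+Rh_\varphi$ of even order at each of its zeros and poles in $\Delta_1$, so $\psi$ is single-valued and analytic there, with $\psi(w)=\sqrt{-Ra}\,w+O(1)$ near $\infty$ (note $-Ra>0$). On $\mathbb{T}_1$ the function $h_\varphi$ has two zeros and one pole inside $\mathbb{D}_1$, so $\theta\mapsto h_\varphi(e^{i\theta})$ has winding number $1$ about $0$; a look at the derivative of its argument (using $|a|<1/3$) shows it is in fact injective, so $1+Rh_\varphi(e^{i\theta})$ parametrizes the circle $|\zeta-1|=R$ exactly once while staying away from $0$ (as $R<1$), and $\psi(e^{i\theta})$ parametrizes the Jordan curve $L_1$ exactly once. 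By the boundary-correspondence (argument) principle for exterior domains, $\psi$ is then univalent on $\Delta_1$ with $\psi(\Delta_1)=\Omega_1$; and since $\psi$ is positive on $(1,\infty)$ (where $1+Rh_\varphi>0$) with $\psi(1)=\sqrt{1+R}$, its leading coefficient $\sqrt{-Ra}$ is positive. Thus $\psi$ is the canonical map of the paper, (\ref{Eq18}) holds, and $\varphi(\psi(w))=(\psi(w)^2-1)/R=(1-aw)w^2/(w-a)$ yields (\ref{eq109}); uniqueness of the canonical exterior map simultaneously rules out any higher-degree rational continuation. The main obstacle is this last step --- especially converting the winding computation into genuine univalence of $\psi$ and nailing down the location of the roots of $P$ --- while the preliminary heuristic narrowing of the form of $h_\varphi$ carries no logical weight once the verification is in place.
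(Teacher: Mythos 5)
Your argument is correct, but its logical architecture differs from the paper's. The paper \emph{derives} the form of $h_\varphi$: from $h_\varphi=(\psi^2-1)/R$ on $\rho<|w|<\infty$ and the reflection identity (\ref{eq32}) it reads off the complete divisor of $h_\varphi$ on $\overline{\mathbb{C}}$ (double pole at $\infty$, simple pole at some $a$ with $-1<a<0$, double zero at $0$, simple zero at $1/a$), which forces (\ref{eq109}) up to the constant $\beta=1$; it then pins down $a$ by observing that $b:=\psi^{-1}(0)$ must be a double zero of $1+Rh_\varphi$, and eliminates $b$ from $P(b)=P'(b)=0$ to reach the quartic. You instead treat the rational form as a heuristic ansatz, obtain the same quartic as the vanishing of $\operatorname{disc}P$ (an equivalent computation --- I checked that $\operatorname{disc}P=0$ does reduce to $27a^4-18a^2-4(R+R^{-1})a-1=0$), and then \emph{verify} a posteriori that $\sqrt{1+Rh_\varphi}$ is the canonical exterior map via single-valuedness on $\Delta_1$, the winding/injectivity computation on $\mathbb{T}_1$, and the boundary-correspondence principle, invoking uniqueness of $\psi$ to close the loop. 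What your route buys is that the heuristic narrowing of the form of $h_\varphi$ never needs to be made rigorous; what it costs is the extra univalence verification, which the paper avoids entirely. The two points you flag as obstacles are real but manageable: the injectivity of $\theta\mapsto h_\varphi(e^{i\theta})$ does follow from $|a|<1/3$ exactly as you indicate (the derivative of the argument is $1-2\,\mathrm{Re}\bigl(ae^{i\theta}/(1-ae^{i\theta})\bigr)\geq 1-2|a|/(1-|a|)>0$), and for the root locations the clean route is the sign analysis $P(-1)=(a+1)(R-1)<0<P(0)=-a$ placing the simple root $c$ in $(-1,0)$, followed by Vieta's relation $b^2c=-1/R$ and $2b+c=1/a$ to get $b<-1$; I would avoid the perturbation from $R\to1$, since that limit is degenerate ($P$ collapses to $\tfrac13(w+1)^3$ with a triple root \emph{on} the unit circle), so it does not by itself decide on which side of $\mathbb{T}_1$ the roots land. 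With those two elementary computations written out, your proof is complete.
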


We shall see during the proof of this proposition that
\begin{equation}\label{eq42}
w-a+R(1-aw)w^2=-aR(w-b)^2(w-c),
\end{equation}
with
\begin{equation}\label{eq43}
b=\frac{\frac{3a^2+1}{2a}-\sqrt{\left(\frac{3a^2+1}{2a}\right)^2-4}}{2}, \quad c=\frac{1-3a^2}{2a}+\sqrt{\left(\frac{3a^2+1}{2a}\right)^2-4}\,,
\end{equation}
\begin{equation}\label{eq46}
-1<1/b<c<a<0,
\end{equation}
so that $\psi$ admits an analytic continuation to $\mathbb{C}\setminus [c,a]$ given by
\[
\psi(w)=\sqrt{-aR}\,(w-b)\sqrt{\frac{w-c}{w-a}}, \quad z\in \mathbb{C}\setminus [c,a].
\]
Moreover, $\rho=|b|^{-1}$, and so we have
\begin{equation}\label{eq120}
0=\mu<\rho_a=|c|<\rho=|b|^{-1}.
\end{equation}

It is not difficult to verify that the set
\[
\Gamma_R:=\left\{w\in\mathbb{D}_1:-R\leq h_\varphi(w)\leq 0\right\}
\]
is an analytic Jordan curve symmetric with respect to the real axis, intersecting it at $1/b$ and $0$. The function $h_\varphi(w)$ maps $\Gamma_R\cap\{z:\Im(z)>0\}$ onto $(-R,0)$ in an injective way. Hence for every $x\in (\sqrt{1-R^2},1)$, the equation
\begin{equation*}
h_\varphi(w)=\varphi(x)
\end{equation*}
has exactly two solutions in $\Gamma_R$, say $\omega_{x,1}$, $\omega_{x,2}$. These are distinct, and $\omega_{x,1}=\overline{\omega_{x,2}}$.

\begin{figure}
\centering
\includegraphics[scale=.42]{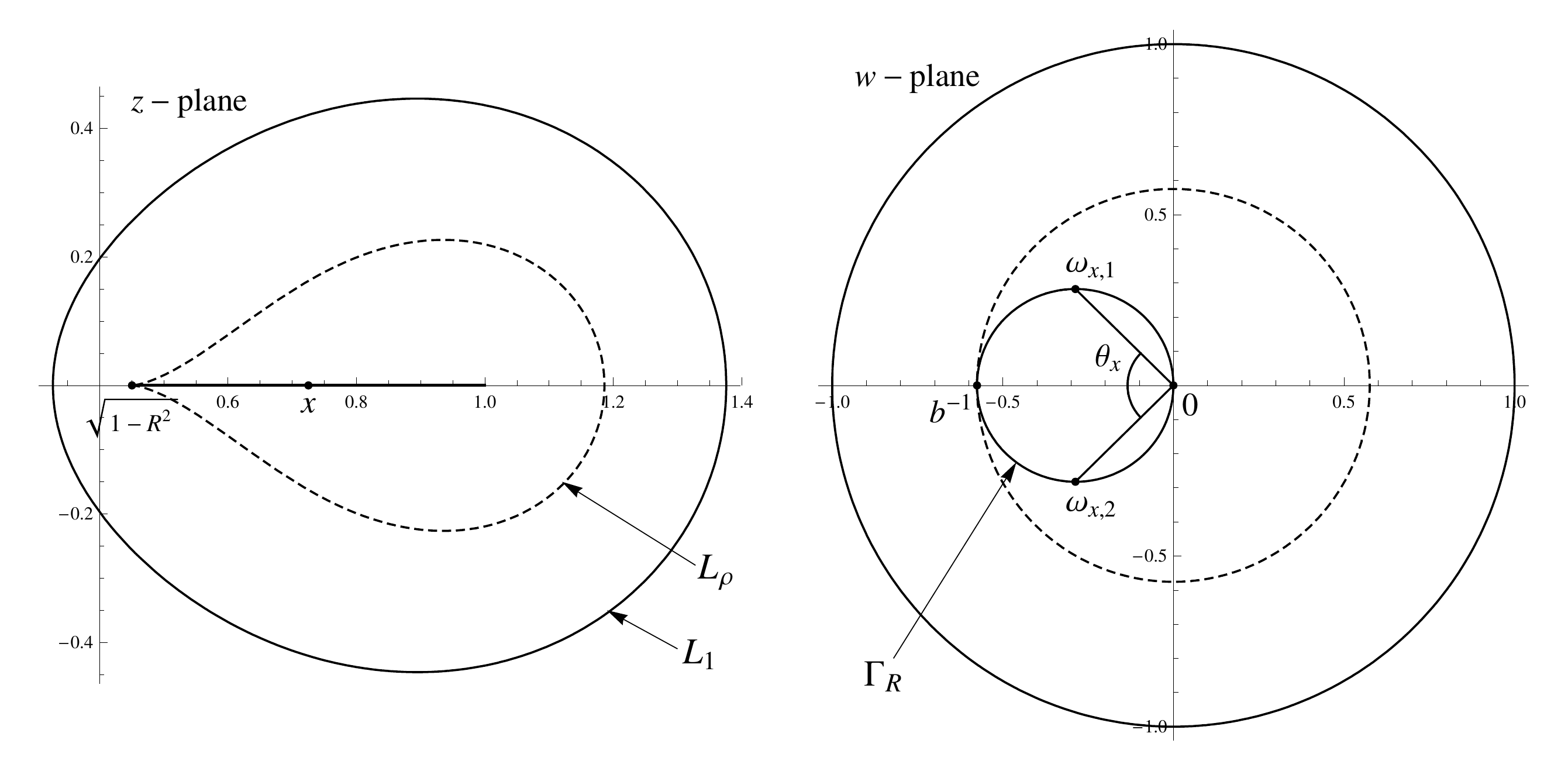}
\caption{Curve $\Gamma_R$ for $a=-.26$ ($R\approx 0.8926$, $\sqrt{1-R^2}\approx 0.4506$).}
\label{figura1}
\end{figure}
\begin{figure}
\centering
\includegraphics[scale=.7]{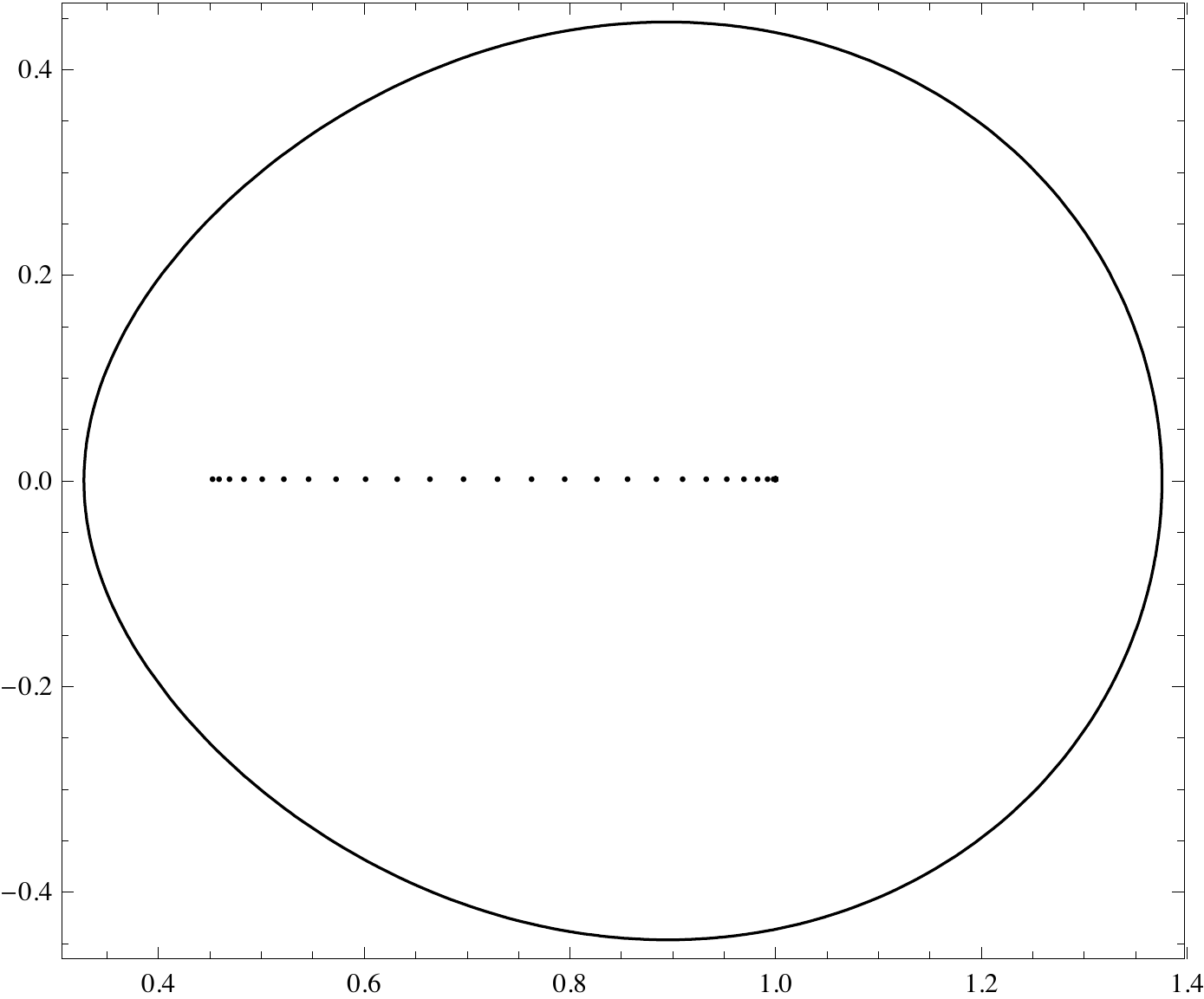}
\caption{Zeros of $p_{50}(z)$ for $a=-.26$ ($R\approx 0.8926$, $\sqrt{1-R^2}\approx 0.4506$).}
\label{figura2}
\end{figure}

Let $0<\theta_x<\pi$ be the angle formed by the two rays emanating from $0$ and passing through $\omega_{x,1}$, $\omega_{x,2}$ (see Figure \ref{figura1}). Recall that $\nu_n$ denotes the normalized counting measure of the zeros of $p_n$.

\begin{thm} \label{thm5}
\[
G_1=\Sigma_1\cup\Sigma_2,\quad \Sigma_2= [\sqrt{1-R^2},1],
\]
and $\{\nu_n\}_{n=1}^\infty$ converges in the weak*-topology to $\sigma+\delta_{1}/2$, where $\sigma$ is the measure supported on $\left[\sqrt{1-R^2},1\right]$ with distribution function
\begin{equation}\label{eq112}
\sigma([\sqrt{1-R^2},x])= \frac{\theta_x}{2\pi}, \quad \sqrt{1-R^2}\leq x\leq 1,
\end{equation}
and  $\delta_1$ is the unit point mass at $1$.
\end{thm}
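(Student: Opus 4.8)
The plan is to first identify the set $\Sigma_0$ and the decomposition of $G_1$, then apply Theorem \ref{thm2} together with the explicit description of $h_\varphi$ from Proposition \ref{prop3} to compute the limiting measure. I would begin by analyzing the equation $h_\varphi(w)=\varphi(z)$ for $z\in G_1$. Since $\mu=0$ (established just before Proposition \ref{prop3}), every $z\in G_1$ lies in $\Sigma_0\cup\bigcup_p\Sigma_p$, and $\Sigma_0$ consists of those $z$ for which $h_\varphi(w)=\varphi(z)$ has no solution in $0<|w|<1$; but $h_\varphi$ is a rational map of degree $3$ with a zero of order $2$ at $w=0$, so by an argument counting preimages inside $\mathbb{D}_1$ one checks that $\Sigma_0=\emptyset$. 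The next, and most substantive, step is to show that $\Sigma_2=[\sqrt{1-R^2},1]$ and $\Sigma_1=G_1\setminus[\sqrt{1-R^2},1]$. For $z$ real in $(\sqrt{1-R^2},1)$ the excerpt already tells us that $h_\varphi(w)=\varphi(z)$ has exactly the two conjugate solutions $\omega_{z,1},\omega_{z,2}$ on $\Gamma_R$ of equal (largest) modulus, each simple, so $z\in\Sigma_2$; one must also verify the endpoints $z=\sqrt{1-R^2}$ and $z=1$ belong to $\Sigma_2$ (at $z=1$, $\varphi(1)=0=h_\varphi(0)$, but $w=0$ is a double zero, so it contributes multiplicity $2$ — this needs care since $0=\mu$ is the inner radius). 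For all other $z\in G_1$, I would argue via the reflection identity (\ref{eq32}) and the injectivity of $h_\varphi$ on $0<|w|<1$ that there is a unique simple solution of largest modulus, placing $z\in\Sigma_1$; here the geometry of $\Gamma_R$ (the level set $-R\le h_\varphi\le 0$) and the fact that $h_\varphi$ maps the half of $\Gamma_R$ in the upper half-plane bijectively onto $(-R,0)$ are the key tools.

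Once the decomposition $G_1=\Sigma_1\cup\Sigma_2$ with $\Sigma_0$ empty is in hand, Theorem \ref{thm2} applies (the interior of $\Sigma_0$ is empty), so the entire sequence $\{\nu_n\}$ converges weak* to the measure $\lambda$ characterized by (\ref{eqq200}). The remaining task is to show $\lambda=\sigma+\delta_1/2$ with $\sigma$ as in (\ref{eq112}). I would do this by identifying the logarithmic potential $U^\lambda(z)=-\log r(z)+\log\phi'(\infty)$ explicitly. On $\Sigma_1$ we have $r(z)=|\Phi(z)|$, and since $\Phi=\phi$ on $\Omega_\rho\cap G_1$ and $\Phi$ is the largest-modulus solution branch elsewhere, $-\log r(z)$ is a harmonic function on $\Sigma_1$ whose behavior near the slit $[\sqrt{1-R^2},1]$ encodes the measure $\lambda$ via the jump in its normal derivative. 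Concretely, for $x\in(\sqrt{1-R^2},1)$ the two boundary values of $\Phi$ as $z\to x$ from above and below are $\omega_{x,1}$ and $\omega_{x,2}=\overline{\omega_{x,1}}$, which have the same modulus; the density of $\lambda$ on the slit is then (up to the standard $1/(2\pi)$ factor from the Plemelj/Green's formula) the jump in $\partial_n\log|\Phi|$, and a direct computation relating $\arg\Phi$ to the angle $\theta_x$ should yield exactly the distribution function $\theta_x/(2\pi)$. The atom $\delta_1/2$ at $z=1$ arises because $z=1$ is the endpoint where the double zero $w=0$ of $h_\varphi$ sits: near $z=1$ one has $\varphi(z)=(z-1)(z+1)/R\sim 2(z-1)/R$ while $h_\varphi(w)\sim -w^2/a$, so $\Phi(z)\sim(\text{const})\sqrt{z-1}$, giving $-\log|\Phi(z)|\sim -\tfrac12\log|z-1|$, i.e. a point mass of weight $1/2$ at $z=1$ — this matches the fact that $1\in\Sigma_2$ contributes $p=2$.

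The main obstacle I anticipate is the careful verification that $\Sigma_1$ and $\Sigma_2$ are exactly as claimed — in particular ruling out the existence, for $z\notin[\sqrt{1-R^2},1]$, of two solutions of (\ref{eq4}) in $0<|w|<1$ tied for largest modulus (which would put $z$ in some $\Sigma_p$ with $p\ge2$), and confirming there are no points of $\Sigma_0$. This requires a global understanding of the conformal/covering behavior of the degree-$3$ rational function $h_\varphi(w)=(1-aw)w^2/(w-a)$ restricted to $\mathbb{D}_1$: one must track how the three preimages of a point move as that point ranges over $\varphi(G_1)=\mathbb{D}_1$, determine when two of them coalesce in modulus, and see that this happens precisely along $\varphi([\sqrt{1-R^2},1])=(-R,0)\cup\{0\}$. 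The symmetry identity (\ref{eq32}) and the explicit factorization (\ref{eq42})–(\ref{eq46}), which locate the branch points $b,c$ of $\psi$ on the negative real axis, are what make this tractable: they pin down $\rho=|b|^{-1}$ and $\rho_a=|c|$ and let one reduce the analysis to the real line, where the monotonicity of $h_\varphi$ on the relevant arcs of $\Gamma_R$ can be checked by elementary calculus. After that, extracting $\sigma$ from the potential is a routine (if slightly delicate) application of the Stieltjes–Perron inversion for the Cauchy transform of $\lambda$.
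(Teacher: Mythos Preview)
Your overall plan matches the paper's: establish $G_1=\Sigma_1\cup\Sigma_2$ with $\Sigma_2=[\sqrt{1-R^2},1]$, invoke Theorem~\ref{thm2} (interior of $\Sigma_0$ empty), then recover $\lambda$ from its potential. Your identification of the atom via $\Phi(z)\sim c\sqrt{z-1}$ is a valid alternative to the paper's argument, which instead observes $\lim_{x\to1^-}\theta_x=\pi$ so that $\sigma$ has total mass $1/2$ and the remaining $1/2$ must sit at $z=1$; your density computation via the jump of $\arg\Phi$ is essentially the paper's use of the standard inversion formula (Saff--Totik, Thm.~II.1.4).

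The one place your plan goes astray is the step ``argue via the reflection identity (\ref{eq32}) and the injectivity of $h_\varphi$ on $0<|w|<1$.'' The map $h_\varphi$ is one-to-one only on $\rho<|w|<1$, not on all of $0<|w|<1$; indeed, if it were injective on the full punctured disk there could be no $\Sigma_2$ at all. The reflection identity relates $|w|<1$ to $|w|>1$ and does not by itself control ties in modulus inside the disk. The paper replaces this with two concrete devices you should adopt. First, a root-product counting argument: for $|\xi|<1$ the cubic $h_\varphi(w)=\xi$ has exactly two roots in $|w|<1$, because (i) not all three can lie there (else $|\xi|=|w_1w_2w_3|<|\xi|^{3/2}$), and (ii) at most one can lie in $|w|>1$ (else $\psi(w_1)=\psi(w_2)=-\varphi^{-1}(\xi)$ forces $w_1=w_2=b$, giving $\xi=-1/R$). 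This immediately yields $G_1=\Sigma_1\cup\Sigma_2$ and $\Sigma_0=\emptyset$. Second, to pin down $\Sigma_2$, the paper shows that if the two roots $w_{\xi,1},w_{\xi,2}$ in $\mathbb{D}_1$ have equal modulus then an explicit formula for $\Re(w_{\xi,j})$ (in terms of $|w_{\xi,j}|$ and $|\xi|$) forces $w_{\xi,1}=\overline{w_{\xi,2}}$, hence $\xi\in\mathbb{R}$; a real-variable analysis of $h_\varphi$ on $(-1,1)$ then gives $\xi\in[-R,0]$, i.e.\ $z\in[\sqrt{1-R^2},1]$. Your ``track the three preimages'' idea would eventually get there, but these two tricks are what make it clean.
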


Thus, in this example $\Sigma_0=\Sigma_p=\emptyset$ for all $p>2$.  The asymptotic formula of Theorem \ref{thm3}(a) holds with $\Phi(z)$ the algebraic function   analytic in $\mathbb{C}\setminus [\sqrt{1-R^2},1]$ that is solution of the  equation
\[
R(1-aw)w^2-(z^2-1)(w-a)=0,\quad
\Phi(-1)=1/a.
\]

In Figure \ref{figura2}, we have plotted the zeros of the polynomial $p_{50}$. They all seem to lie in the segment $[\sqrt{1-R^2},1]$, and only $26$ of them show up. This is corroborated by the following theorem, which we derive directly from the orthogonality property of the $p_n$'s.

\begin{thm} \label{thm6} For each $n\geq 0$,
\[
p_n(z)=(z-1)^{\lfloor n/2\rfloor}q_{n}(z),
\]
where $q_n(z)$ is a polynomial with $n-\lfloor n/2\rfloor$ simple roots, all lying in $(\sqrt{1-R^2},1)$.
\end{thm}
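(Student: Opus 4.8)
The plan is to unfold $G_{1}$ by the two-to-one change of variables $w=z^{2}$. Since $z\in G_{1}\Leftrightarrow|z^{2}-1|<R$ and $G_{1}$ avoids the origin, $w=z^{2}$ maps $G_{1}$ conformally onto the disk $D:=\{w:|w-1|<R\}$, whose closure contains neither $0$ nor $-1$, with inverse the branch of $\sqrt{w}$ satisfying $\sqrt{1}=1$. As $dA_{z}=\tfrac1{4|w|}\,dA_{w}$, one has $\tfrac1\pi\int_{G_{1}}f\,\overline g\,dA_{z}=\tfrac1{4\pi}\int_{D}f(\sqrt w)\overline{g(\sqrt w)}\,|w|^{-1}\,dA_{w}$, and in the computations below the weight $|w|^{-1}$ cancels against $|z|^{2}=|w|$ coming from the test functions; whenever an integrand thereby becomes a holomorphic function (or a monomial in $w-1$), the orthogonality relations $\int_{D}(w-1)^{a}\overline{(w-1)^{b}}\,dA_{w}=\delta_{ab}\,\pi R^{2b+2}/(b+1)$ over the disk collapse it. I would also note at the outset that $p_{n}$ has real coefficients, since $z\mapsto\bar z$ preserves $G_{1}$ and conjugates the inner product, so $\overline{p_{n}(\bar z)}$ is again the $n$-th orthonormal polynomial.

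\emph{Part (a).} Put $H_{n}(w):=p_{n}(\sqrt w)/\sqrt w$, holomorphic on $D$. The odd-index orthogonality relations $\langle p_{n},z^{2i+1}\rangle=0$ for $2i+1\le n-1$ (that is, $i=0,\dots,\lfloor n/2\rfloor-1$) transform, after the substitution and cancellation of $|w|^{-1}$, into $\int_{D}H_{n}(w)\,\overline{w^{i}}\,dA_{w}=0$ for those $i$; expanding $w^{i}=\bigl(1+(w-1)\bigr)^{i}$ and using the disk relations above, the system is triangular and forces the first $\lfloor n/2\rfloor$ Taylor coefficients of $H_{n}$ at $w=1$ to vanish. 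Hence $p_{n}(\sqrt w)=\sqrt w\,H_{n}(w)$ vanishes to order $\ge\lfloor n/2\rfloor$ at $w=1$, and since $z\mapsto\sqrt w$ is biholomorphic near $w=1$ this gives $(z-1)^{\lfloor n/2\rfloor}\mid p_{n}$. With $L:=\lfloor n/2\rfloor$ and $H_{n}(w)=(w-1)^{L}\tilde H_{n}(w)$ ($\tilde H_{n}$ holomorphic on $D$), we obtain $p_{n}(z)=z(z^{2}-1)^{L}\tilde H_{n}(z^{2})$ on $G_{1}$, hence $q_{n}(z):=p_{n}(z)/(z-1)^{L}=z(z+1)^{L}\tilde H_{n}(z^{2})$ on $G_{1}$; as $0,-1\notin G_{1}$, the zeros of $q_{n}$ in $G_{1}$ are exactly those of $\tilde H_{n}$ in $D$, with the same multiplicities. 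Note $\deg q_{n}=n-L=\lceil n/2\rceil=:d$.

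\emph{Part (b)} is the crux. One first observes that $q_{n}$ is, up to a positive constant, the degree-$d$ orthogonal polynomial of the measure $d\sigma:=|z-1|^{2L}\pi^{-1}dA$ on $G_{1}$: indeed $\langle q_{n},s\rangle_{d\sigma}=\langle p_{n},(z-1)^{L}s\rangle=0$ for every $s$ with $\deg s\le d-1$, since then $\deg\bigl((z-1)^{L}s\bigr)\le n-1$. The aim is to show that this two-dimensional orthogonality is \emph{equivalent}, after the change of variables, to orthogonality of $q_{n}$ (viewed as a function of $w=z^{2}$) with respect to a positive measure on the real segment $[1-R^{2},1]=\{z^{2}:z\in[\sqrt{1-R^{2}},1]\}$. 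Granting this, the classical one-dimensional theory yields that $\tilde H_{n}$ has exactly $d$ simple zeros there, lying in the open interval $(1-R^{2},1)$; pulling back through $z\mapsto z^{2}$ gives $d$ simple zeros of $q_{n}$ in $(\sqrt{1-R^{2}},1)$, and since $\deg q_{n}=d$ these exhaust the zeros of $q_{n}$. In particular $q_{n}(1)\ne0$, so $p_{n}$ vanishes at $z=1$ to order exactly $L=\lfloor n/2\rfloor$, completing the statement. Effecting the reduction is the main obstacle: transporting $d\sigma$ through $w=z^{2}$, the factor $|z-1|^{2L}$ combines with the $(z\pm1)$'s carried by $q_{n}$ and with the Jacobian to produce a \emph{radial} weight $|u|^{2L}$ on the disk $\{u:|u|<R\}$ (with $u=z^{2}-1$), which is why the problem localizes to the zeros of $\tilde H_{n}$; here it is natural to exploit that $\{z(z+1)^{L}(z^{2}-1)^{m}\}_{m\ge0}$ is a complete orthogonal system of $L^{2}(G_{1},d\sigma)$ adapted to the area measure. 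What is delicate is collapsing the resulting area integral over $\{|u|<R\}$ to a line integral over the real segment $[-R^{2},0]$: the transported test monomials $z^{k}$ split according to the parity of $k$ and produce half-integer powers of $w$, reflecting the even/odd decomposition $p_{n}(z)=A_{n}(z^{2})+zB_{n}(z^{2})$, so a naive collapse fails and one must either select the test family more cleverly or combine the computation with the real-axis symmetry of $G_{1}$ and an argument-principle count of the zeros of $\tilde H_{n}$ inside and on $\partial D$; simplicity and interlacing of the zeros then follow in the standard way, or directly from the explicit equation $R(1-aw)w^{2}-(z^{2}-1)(w-a)=0$ for $\Phi$ together with the description of $\Gamma_{R}$ given before Theorem~\ref{thm5}.
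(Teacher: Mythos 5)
Your Part (a) is correct and genuinely different from the paper's: the paper obtains $p_n^{(j)}(1)=0$ for $0\le j\le\lfloor n/2\rfloor-1$ by converting the area orthogonality $\int_{G_1}p_n\overline{z^{2m-1}}\,dxdy=0$ into a contour integral over $L_1$ via Green's formula, substituting $\overline{z}^{2m}=\left(R^2/(z^2-1)+1\right)^m$ (valid on $L_1$), and applying the Cauchy integral formula at $z=1$; your unfolding $w=z^2$ onto the disk $|w-1|<R$ and the triangular system for the Taylor coefficients of $H_n(w)=p_n(\sqrt w)/\sqrt w$ at $w=1$ reaches the same conclusion by an equally clean route. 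The observation that $p_n$ has real coefficients also matches the paper.

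Part (b), however, contains a genuine gap, and you say so yourself: the reduction of the planar orthogonality of $q_n$ to a one-dimensional orthogonality on the segment is never effected (``a naive collapse fails''), and the alternatives you gesture at --- a cleverer test family, an argument-principle count, or the algebraic equation for $\Phi$ --- are not developed and would not obviously deliver \emph{exact} simplicity and location of all $n-\lfloor n/2\rfloor$ zeros for every fixed $n$ (the equation for $\Phi$ only governs asymptotics). The missing idea in the paper is this: apply Green's formula to the \emph{even} moments, $\int_{G_1}p_n\overline{z^{2m}}\,dxdy=\frac{1}{2i(2m+1)}\int_{L_1}p_n(z)\overline{z^{2m+1}}\,dz$, and on $L_1$ write $\overline{z^{2m+1}}=\left[\frac{R^2}{z^2-1}+1\right]^m\sqrt{\frac{z+\sqrt{1-R^2}}{1+z}}\sqrt{\frac{z-\sqrt{1-R^2}}{1-z}}$, an algebraic function of $z$ whose only branch cut inside $L_1$ is $[\sqrt{1-R^2},1]$. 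Collapsing the contour onto that two-sided cut produces a \emph{positive} measure $d\lambda_n=(1-x)^{\lfloor n/2\rfloor}\sqrt{\tfrac{x^2-(1-R^2)}{1-x^2}}\,dx$ and the relations $\int_{\sqrt{1-R^2}}^{1}q_n(x)[f(x)]^m\,d\lambda_n(x)=0$ for $0\le m\le n-\lfloor n/2\rfloor-1$ with $f(x)=R^2/(x^2-1)+1$ strictly decreasing; the standard sign-change argument with the test function $\prod_k[f(x)-f(\alpha_k)]$ then forces exactly $n-\lfloor n/2\rfloor$ simple roots in the open interval. Without this (or an equivalent) collapse onto the branch cut, your proposal does not prove the theorem.
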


\subsection{Level curves of the inverse of a shifted Joukowsky transformation}

Let $R>2$ be fixed, and set
\begin{equation}\label{eq118}
L_1:=\left\{w-1+(w-1)^{-1}:|w|=R\right\}.
\end{equation}

From very well-known properties of the Joukowsky transformation $u \mapsto u+1/u$, it follows that $L_1$ is an analytic Jordan curve, with
\begin{equation}\label{eqq11}
\psi(w)=Rw-1+\frac{1}{Rw-1}, \quad z\in\overline{\mathbb{C}},
\end{equation}
mapping $\Delta_1$ conformally onto the exterior $\Omega_1$ of $L_1$. Moreover, $\psi$ maps both $\{w:|w-1/R|>1/R\}$ and $\{w:|w-1/R|<1/R\}$ conformally onto $\overline{\mathbb{C}}\setminus [-2,2]$, and for every $z\in \mathbb{C}$, the two solutions of the equation $z=\psi(w)$
are
\begin{equation}\label{eqq33}
v_{z,1}=\frac{z+2+\sqrt{z^2-4}}{2R}, \quad v_{z,2}=\frac{z+2-\sqrt{z^2-4}}{2R}.
\end{equation}

Note that $v_{z,1}$ and $\overline{v_{z,2}}$ are reflections of each other about the circle $|w-1/R|=1/R$, and that if we choose  the branch of the square root in (\ref{eqq33}) that is positive along $(2,\infty)$, then $|v_{z,1}|>|v_{z,2}|$ for every  $z\in\mathbb{C}\setminus [-2,2]$, with $v_{z,1}$ and $v_{z,2}$ lying, respectively, outside and inside the circle $|w-1/R|=1/R$.

\begin{figure}
\centering
\includegraphics[scale=.65]{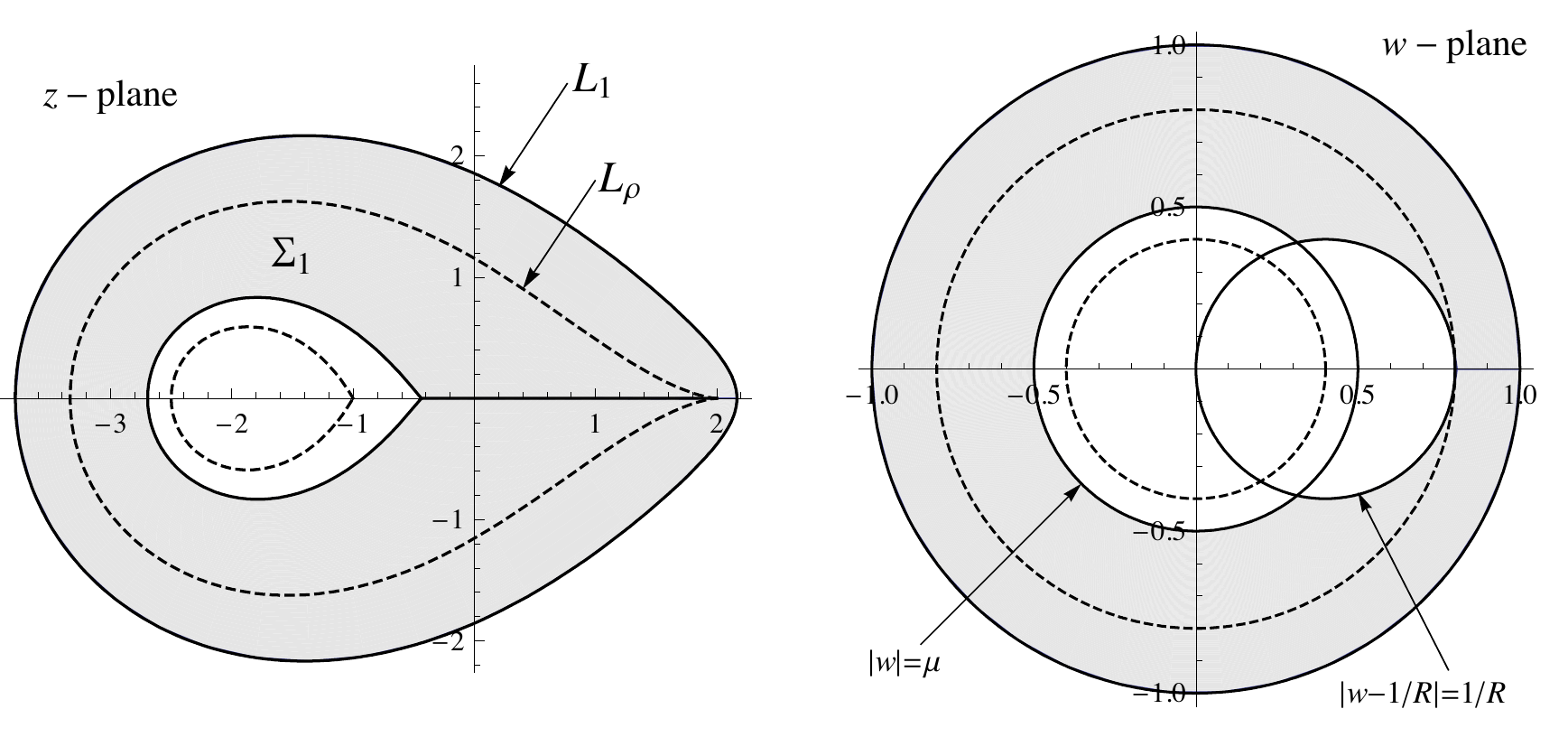}
\caption{Sets $\Sigma_1$, $\Sigma_2$ and $\Sigma_0$ for a curve $L_1$ defined as in (\ref{eq118}) for $R=2.5$.}\label{figura3}
\end{figure}
\begin{figure}
\centering
\includegraphics[scale=1]{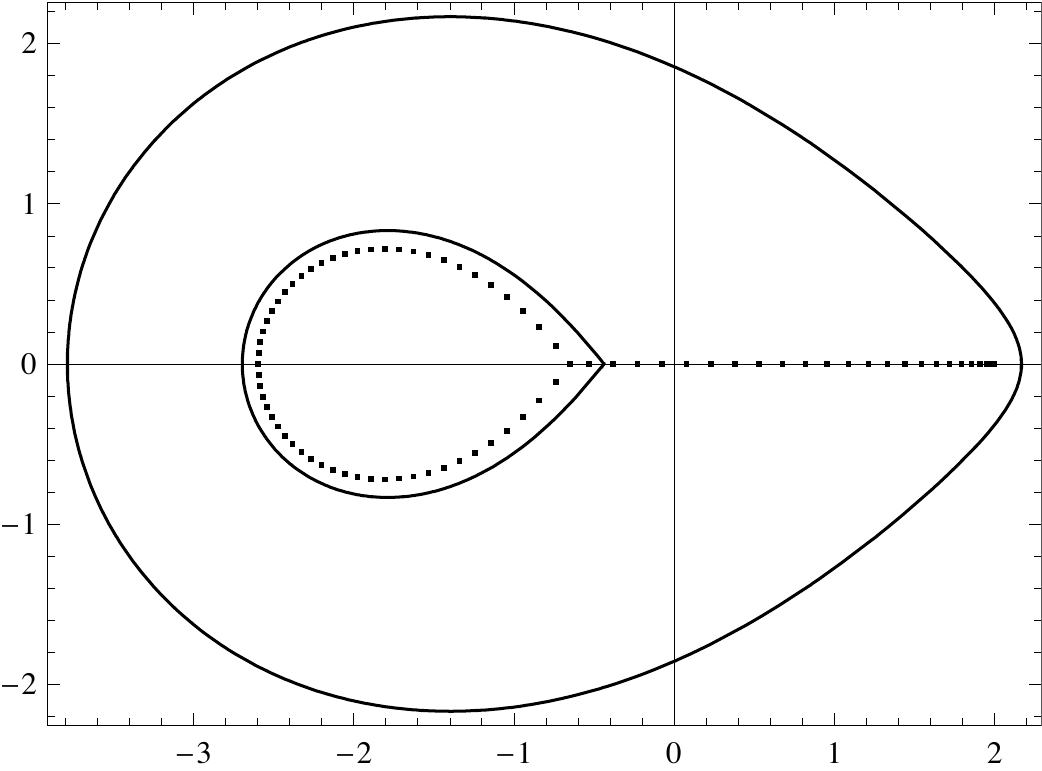}
\caption{Zeros of $p_{80}(z)$ for $L_1$ as in (\ref{eq118}) for $R=2.5$.}\label{figura4}
\end{figure}

We appeal to Proposition \ref{prop2} and find
\begin{thm}\label{thm8}
\begin{equation*}
\mu=\frac{R-\sqrt{R^2-4}}{2}
\end{equation*}
and  $G_1=\Sigma_0\cup\Sigma_1\cup\Sigma_2$, where $\Sigma_1$ is the image by $\psi$ of the set of points in $\mathbb{D}_1$ that lie exterior to both circles $|w|=\mu$ and $|w-1/R|=1/R$, and $\Sigma_2=(R^2\mu^2-2,2]$ (see Figure \ref{figura3}).
\end{thm}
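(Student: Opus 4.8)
\textit{Proof proposal.}
The plan is to apply Proposition~\ref{prop2} with the choice $\bar\mu=\mu_0:=(R-\sqrt{R^2-4})/2$. The starting observation is that, writing $u=Rw-1$, the map (\ref{eqq11}) is the Joukowsky transformation $u+1/u$ precomposed with an affine map; consequently the two branches of $\psi^{-1}$ are interchanged by the M\"obius involution $\iota(w):=w/(Rw-1)$, so that $\psi\circ\iota=\psi$, and $\iota$ fixes the two critical points $w=0$, $w=2/R$ of $\psi$ (with critical values $\mp2$) while interchanging the interior and exterior of the circle $\mathcal C:=\{w:|w-1/R|=1/R\}$. A direct computation then gives $\rho_a=1/R$ (the only finite pole of $\psi$) and $\rho=2/R$: indeed $w\mapsto Rw-1$ maps $\Delta_{2/R}$ into $\{|u|>1\}$, where $u+1/u$ is univalent, whereas for $r<2/R$ the circle $\mathcal C$ contains points $w$ with $r<|w|<2/R$, and for such $w$ both $w$ and $\iota(w)$ lie in $\Delta_r$. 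Since $\mu_0=2/(R+\sqrt{R^2-4})$, the elementary inequalities $1/R\le\mu_0<2/R$ hold, so $\rho_a\le\bar\mu<\rho$.

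The crux is to verify the hypothesis $L_r\subset G_{1/r}$ of Proposition~\ref{prop2} for all $\mu_0<r<1$. Since $1/r>1>\rho$, the curve $L_{1/r}$ is an analytic Jordan curve with $\overline{\Omega}_{1/r}=\psi(\{|w|\ge 1/r\})$, so a point $z=\psi(w)\in G_1$ belongs to $G_{1/r}$ exactly when both preimages $w$ and $\iota(w)$ of $z$ have modulus $<1/r$. Taking $|w|=r$ and using $\max_{|w|=r}|\iota(w)|=r/(Rr-1)$ (attained at $w=r$, as $Rr>1$), the condition $L_r\subset G_{1/r}$ becomes $r/(Rr-1)<1/r$, i.e.\ $r^2-Rr+1<0$, which holds precisely for $\mu_0<r<(R+\sqrt{R^2-4})/2$; as the right endpoint exceeds $1$, this covers all of $(\mu_0,1)$. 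Proposition~\ref{prop2} then yields $\mu\le\mu_0$, $\Sigma_p\supset C^{\mu_0}_p$ for all $p$, and $\Phi(z)=v_{z,1}$ on $C^{\mu_0}_1$. To upgrade to $\mu=\mu_0$ (and hence $\Sigma_p=C^{\mu_0}_p$ for all $p$, by the last assertion of Proposition~\ref{prop2}), I produce the sequence required by (\ref{eq35}). For real $w$ one has $\iota(1/\overline{w})=1/(R-w)=:g(w)$; the map $g$ has $\mu_0$ as its unique fixed point in $(0,1)$, with $g'(\mu_0)=(R-\mu_0)^{-2}<1$, and $g(w)-w=(w^2-Rw+1)/(R-w)<0$ on $(\mu_0,1)$; hence starting from any $w_1\in(\mu_0,1)$ the iterates $w_n$ decrease strictly to $\mu_0$ while remaining in $(\mu_0,1)$. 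Since $\psi(w_{n+1})=\psi(\iota(1/\overline{w_n}))=\psi(1/\overline{w_n})$, Proposition~\ref{prop2} gives $\mu=\mu_0=\lim_n|w_n|$.

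It remains to identify $\Sigma_1$ and $\Sigma_2$ from $C^{\mu_0}_1$ and $C^{\mu_0}_2$. Since $\deg\psi=2$ we have $C^{\mu_0}_p=\emptyset$ for $p\ge3$, so $G_1=\Sigma_0\cup\Sigma_1\cup\Sigma_2$. For $z\in G_1$ both preimages of $z$ lie in $\mathbb D_1$; if $z\notin[-2,2]$, then by the remarks following (\ref{eqq33}), $v_{z,1}$ lies outside $\mathcal C$ and $v_{z,2}$ inside $\mathcal C$, with $|v_{z,1}|>|v_{z,2}|$, so $z\in C^{\mu_0}_1$ if and only if $|v_{z,1}|>\mu_0$; the correspondence $z\mapsto v_{z,1}$ is then a bijection of $\Sigma_1$ onto $\{w\in\mathbb D_1:|w|>\mu_0,\ |w-1/R|>1/R\}$ (injectivity because the $\iota$-companion of any such $w$ lies inside $\mathcal C$; surjectivity because any such $w$ satisfies $|\iota(w)|<|w|<1$, whence $\psi(w)\in G_1\setminus[-2,2]$), which is the asserted description of $\Sigma_1$. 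For $z=2$ the double preimage $w=2/R$ has $|2/R|>\mu_0$, so $2\in C^{\mu_0}_2$; for $z\in(-2,2)$ the two simple preimages lie on $\mathcal C$ and share the modulus $\sqrt{z+2}/R$, so $z\in C^{\mu_0}_2$ if and only if $z>R^2\mu_0^2-2$, and one checks $R^2\mu_0^2-2\in(-2,2)$; hence $\Sigma_2=(R^2\mu_0^2-2,2]$.

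The main obstacle is the second paragraph: recognizing that membership of $\psi(w)$ in $G_{1/r}$ is governed entirely by the moduli of $w$ and $\iota(w)$, which converts the geometric hypothesis of Proposition~\ref{prop2} into the single quadratic inequality $r^2-Rr+1<0$. Once this is in place, everything else is bookkeeping driven by the explicit formulas (\ref{eqq11})--(\ref{eqq33}) together with the structure of the degree-two branched cover $\psi$.
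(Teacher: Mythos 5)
Your proof is correct and follows essentially the same route as the paper: it applies Proposition~\ref{prop2} with $\bar\mu=(R-\sqrt{R^2-4})/2$, reduces the hypothesis $L_r\subset G_{1/r}$ to the same quadratic inequality $r^2-Rr+1<0$, and uses the same recursion $w_{n+1}=1/(R-w_n)$ to pin down $\mu=\bar\mu$. Your use of the involution $\iota(w)=w/(Rw-1)$ is just a cleaner packaging of the paper's computation, and your final paragraph supplies the explicit identification of $C^{\bar\mu}_1$ and $C^{\bar\mu}_2$ that the paper leaves to the reader.
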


Given that $\Sigma_1$ is connected, the function $\Phi$ is nothing but the analytic continuation of $\phi(z)=(z+2+\sqrt{z^2-4})/(2R)$, and so it follows from Carleman's formula (\ref{eq8}) and Theorem \ref{thm3}(a) that
\[
\lim_{n\to\infty}\frac{(2R)^{n+1}p_{n}(z)}{\sqrt{n+1}\left[z+2+\sqrt{z^2-4}\right]^n}
=\frac{z+\sqrt{z^2-4}}{\sqrt{z^2-4}}
\]
locally uniformly in $\Sigma_1\cup \overline{\Omega}_1$. Also, Theorem \ref{thm4} tells us that every point of $\partial \Sigma_1\cap G_1$ attracts zeros of the $p_n$'s, a fact illustrated in Figure \ref{figura4}.

Observe that, unlike the previous example in which we had $\mu<\rho_a$ (see (\ref{eq120})), we now have
\[
\frac{1}{R}=\rho_a<\mu<\rho=\frac{2}{R}.
\]

An interesting problem worth of consideration is that of determining the asymptotic behavior of $p_n$ in the interior of $\Sigma_0$. Naturally, one would attempt to derive such a behavior from (\ref{eq2}), but the situation is now considerably more difficult because for every $z\in \Sigma_0$, the function $(\varphi(\psi(w))-\varphi(z))^{-1}$ figuring under the integral sign in (\ref{eq2}) is a meromorphic function having  infinitely many poles in $|w|\leq \mu$ that accumulate over $\mu$. These poles, however, can be recursively found. These facts are summarized in the following result, whose proof illustrates the use of Proposition \ref{prop6} and which we think will serve the interested reader in his investigation of this problem.

Let $\varphi$ be the unique conformal map of $G_1$ onto the unit disk satisfying that $\varphi(-2)=0$, $\varphi'(-2)>0$.

\begin{prop} \label{prop8}
\begin{enumerate}[(a)]
\item The function $\varphi(\psi(w))$, originally defined in $\rho<|w|<1/\rho$, has a meromorphic continuation $h_\varphi(w)$ to $\overline{\mathbb{C}}\setminus\left\{\mu,1/\mu\right\}$, which satisfies (\ref{eq32}) for all $w\in\overline{\mathbb{C}}\setminus\left\{\mu,1/\mu\right\}$. The map $\varphi(z)$ has a meromorphic continuation to $\overline{\mathbb{C}}\setminus \left\{R^2-2\right\}$.

\item For every $z\in \Sigma_0$, the solutions that the equation $h_\varphi(w)=\varphi(z)$ has in $|w|\leq \mu$ are the odd-indexed elements of the two recursively generated sequences
\begin{equation}\label{eq122}
w_{n+1}=\frac{1}{R-\overline{w}_n},\quad n\geq 1,\quad w_1=\frac{z+2\pm\sqrt{z^2-4}}{2R}.
\end{equation}

\end{enumerate}
\end{prop}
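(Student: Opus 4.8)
The plan is to deduce (a) from Propositions \ref{prop1} and \ref{prop6}, and then to obtain (b) from (a) together with the algebraic identities satisfied by $\psi$. Throughout, write $\iota(w)=1/\overline w$ for the reflection about $\mathbb{T}_1$, let $\sigma(w):=w/(Rw-1)$ be the (unique, Möbius, involutive) deck transformation of the degree--two rational map $\psi$ of (\ref{eqq11}), so that $\psi\circ\sigma=\psi$, and set $\tau:=\sigma\circ\iota$, so that $\tau(w)=1/(R-\overline w)$ is exactly the map generating the recursion (\ref{eq122}). Elementary computations give $\psi(\mu)=\psi(1/\mu)=R^2-2$ (equivalently $\mu,1/\mu$ are the roots of $x^2-Rx+1$, so $R-\mu=1/\mu$ and $\tau(\mu)=\mu$), and $\tau^2=T$, where $T(w)=(R-w)/(R^2-Rw-1)$ is a hyperbolic Möbius transformation fixing $\mu,1/\mu$ with multiplier $T'(\mu)=\mu^4<1$; thus $\mu$ is the attracting fixed point of $T$ and $\tau$ is a strict contraction near its fixed point $\mu$ (local factor $\mu^2<1$).

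\emph{Proof of (a).} Using that $\psi$ is a degree--two rational map one first checks that $\psi^{-1}(G_1)=\mathbb{D}_1\setminus\overline D$, where $D$ is the open disc with diameter $[1/(R+1),1/(R-1)]$ (a small disc around $\mu$, contained in $\mathbb{D}_\rho$), $\psi$ restricting to a two--to--one branched cover $\mathbb{D}_1\setminus\overline D\to G_1$ with deck transformation $\sigma$; in particular $\varphi(\psi(w))$ extends analytically from the annulus $\rho<|w|<1/\rho$ to all of $\mathbb{D}_1\setminus\overline D$. Now apply Proposition \ref{prop6} with $\mathfrak D_t:=\mathbb{D}_1\setminus\overline{B_t}$, where $\{B_t\}$ is a continuously (strictly, right--continuously) shrinking family of closed discs centred at $\mu$ with $B_a\supset\overline D$ chosen so small that $\overline{B_a}\subset\mathbb{D}_\rho$, and $\bigcap_t B_t=\{\mu\}$. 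The condition $\psi(\mathfrak D_a)\subset G_1$ holds by the previous sentence, and the only nontrivial hypothesis, $\psi(\overline{\mathfrak D}_t)\subset\overline G_1\cup\psi(\mathfrak D_t^\ast)$ for $a<t<b$, reduces after a short computation — using $\psi^{-1}(R^2-2)=\{\mu,1/\mu\}$, that $\psi$ is conformal on $\Delta_1$, and $\psi\circ\sigma=\psi$ — to the single requirement $\tau(\overline{B_t})\subset\operatorname{int}(B_t)$, which holds for all sufficiently small such discs because $\mu$ is a contracting fixed point of $\tau$. Proposition \ref{prop6} thus yields a meromorphic continuation $h_\varphi$ of $\varphi(\psi(w))$ to $\mathfrak D=\bigcup_t\mathfrak D_t=\mathbb{D}_1\setminus\{\mu\}$; Proposition \ref{prop1} (applied with this $\mathfrak D$) then promotes it to $\mathfrak D\cup\mathbb{T}_1\cup\mathfrak D^\ast=\overline{\mathbb{C}}\setminus\{\mu,1/\mu\}$ with the symmetry (\ref{eq32}) valid there, and shows that $\varphi$ continues meromorphically to $\overline G_1\cup\psi(\mathfrak D^\ast)=\overline G_1\cup\psi(\Delta_1\setminus\{1/\mu\})=\overline G_1\cup(\Omega_1\setminus\{R^2-2\})=\overline{\mathbb{C}}\setminus\{R^2-2\}$. (That this $\mu$ is the one of Definition \ref{defn1} is part of Theorem \ref{thm8}.)

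\emph{Proof of (b).} Since $h_\varphi=\varphi\circ\psi$ on $\overline{\mathbb{C}}\setminus\{\mu,1/\mu\}$ and $\psi\circ\sigma=\psi$, we have $h_\varphi\circ\sigma=h_\varphi$; combined with (\ref{eq32}), i.e. $h_\varphi\circ\iota=1/\overline{h_\varphi}$, this gives $h_\varphi\circ\tau=1/\overline{h_\varphi}$ and hence
\[
h_\varphi\circ T=h_\varphi\circ\tau^2=h_\varphi .
\]
Fix $z\in\Sigma_0$. Then $z\neq R^2-2$, so the two solutions $v_{z,1},v_{z,2}$ of $\psi(w)=z$ in (\ref{eqq33}) differ from $\mu,1/\mu$ and satisfy $h_\varphi(v_{z,i})=\varphi(\psi(v_{z,i}))=\varphi(z)$. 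Writing $w_1=v_{z,1}$ (resp. $v_{z,2}$) and iterating (\ref{eq122}), i.e. $w_{n+1}=\tau(w_n)$, one gets $w_{2k+1}=T^k(w_1)$, so $h_\varphi(w_{2k+1})=\varphi(z)$ for every $k\ge0$ (while $h_\varphi(w_{2k})=1/\overline{\varphi(z)}\neq\varphi(z)$): the odd--indexed terms of both sequences are solutions. They are pairwise distinct, and $T^k(w_1)\to\mu$ since $w_1\neq1/\mu$. To see they lie in $|w|\le\mu$: an elementary estimate (positivity of $(R^2-1)|R-w|^2-2R\operatorname{Re}(R-w)+1$ on $|w|\le\mu$) gives $|T(w)|<1$ for all $|w|\le\mu$; since both preimages of $z\in G_1$ lie in $\mathbb{D}_1$ and, by definition of $\Sigma_0$, no solution lies in $\mu<|w|<1$, we have $|w_1|\le\mu$, and then $w_{2k+1}=T(w_{2k-1})\in\mathbb{D}_1$ is again a solution, forcing $|w_{2k+1}|\le\mu$ by induction.

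There remains the converse: every solution $\omega$ of $h_\varphi(w)=\varphi(z)$ with $|\omega|\le\mu$ is an odd--indexed term of one of the two sequences. Here one uses that, by the functional relations above, the full solution set in $\overline{\mathbb{C}}\setminus\{\mu,1/\mu\}$ is invariant under the (infinite dihedral) group $\Gamma=\langle\sigma,T\rangle$; since $\sigma$ interchanges $v_{z,1}$ and $v_{z,2}$, these two points — and hence all the $w_{2k+1}$ of both sequences — lie in a single $\Gamma$--orbit, and the claim is that for $z\in\Sigma_0$ this is the \emph{only} $\Gamma$--orbit of solutions. I expect this last point to be the main obstacle: it amounts to showing that any solution in $|w|\le\mu$ can be carried, by finitely many steps of the backward recursion $w\mapsto\iota(\sigma(w))=R-1/\overline w$, onto $\{v_{z,1},v_{z,2}\}$ without leaving the closed region on which $h_\varphi=\varphi\circ\psi$ is injective onto $\overline G_1$. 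It can be handled either by this orbit--tracking argument, or by the contour--deformation/counting argument behind the integral representation (\ref{eq2}), which identifies, for $z\in\Sigma_0$, the poles of $(h_\varphi(w)-\varphi(z))^{-1}$ in $|w|\le\mu$ with precisely these orbit points accumulating at $\mu$.
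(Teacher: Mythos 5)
Your overall architecture coincides with the paper's: part (a) via Proposition \ref{prop6} applied to an exhausting family of domains whose excluded sets collapse onto $\{\mu\}$, followed by Proposition \ref{prop1}; part (b) via the functional equations $h_\varphi\circ\sigma=h_\varphi$ and (\ref{eq32}). But there are two genuine gaps.

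First, in part (a) your family $\mathfrak D_t=\mathbb{D}_1\setminus\overline{B_t}$, with the $B_t$ discs \emph{centered at $\mu$}, cannot in general meet the hypotheses of Proposition \ref{prop6}. The condition $\psi(\mathfrak D_a)\subset G_1$ forces $\overline D\subset\overline{B_a}$ (your $\overline D$, the disc with diameter $[1/(R+1),1/(R-1)]$, is exactly $\psi^{-1}(\overline\Omega_1)\cap\mathbb{D}_1$), while the first inclusion in (\ref{eqq32}) forces $\overline{B_a}\subset\overline{\mathbb{D}}_\rho$. Since $\mu$ sits off-center in $D$, the smallest disc centered at $\mu$ containing $\overline D$ has radius $\mu-1/(R+1)$ and reaches out to $2\mu-1/(R+1)$ on the positive real axis; this exceeds $\rho=2/R$ — indeed exceeds $1$ — once $R$ is near $2$ (for $R=2.1$ one gets $2\mu-1/(R+1)\approx 1.137$ against $\rho\approx 0.952$). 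So for such $R$ no admissible $B_a$ exists and your construction cannot even start. A smaller issue in the same step: the condition $\tau(\overline{B_t})\subset\operatorname{int}(B_t)$ must be verified for \emph{every} disc in the family, not only for "sufficiently small" ones (the estimate $|\tau(w)-\mu|=\mu^2|w-\mu|/|1-\mu(\overline w-\overline\mu)\mu^{-1}\cdot\mu|$, i.e. $|\tau(w)-\mu|\le \mu^2 r/(1-\mu r)$ on $|w-\mu|=r$, does give it for all $r<\sqrt{R^2-4}$, but that has to be said). The paper avoids the geometric obstruction by excluding non-circular sets — the annular sectors appearing in its definition of $\mathfrak D_t$ — which at $t=\pi/2$ contain the right half of $\overline{\mathbb{D}}_\rho$ (hence all of $\overline D$) and shrink to $\{\mu\}$ as $t\to0$.

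Second, part (b) is only half proved. The forward direction (the odd-indexed terms of (\ref{eq122}) are solutions lying in $|w|\le\mu$) is fine, but the converse — that there are \emph{no other} solutions in $|w|\le\mu$ — is the substance of the statement, and you explicitly defer it as "the main obstacle," offering two unexecuted strategies. The paper carries out the first of them: if $h_\varphi(u_1)=\varphi(z)$ with $u_1\in\mathbb{D}_1\setminus\{\mu\}$, run the backward recursion $u_{n+1}=R-1/\overline{u_n}$ inverse to (\ref{eq122}); the orbit cannot stay in $\mathbb{D}_1$ forever, since otherwise it would have to accumulate at $\mu$ while $|u_{n+1}-\mu|=|u_n-\mu|/|\mu u_n|>|u_n-\mu|$; as long as $\psi(u_n)\notin\overline G_1$ the next term stays in $\mathbb{D}_1$, so there is a first index $N$ with $\psi(u_N)\in\overline G_1$, whence $|h_\varphi(u_N)|\le1$, forcing $N$ odd and $\psi(u_N)=z$ and identifying $u_1$ as an odd-indexed iterate of one of the two roots of $\psi(w)=z$. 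Without this step Proposition \ref{prop8}(b) is not established.
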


Since $\psi(w)$ maps $\overline{\mathbb{C}}\setminus\left\{\mu,1/\mu\right\}$ onto $\overline{\mathbb{C}}\setminus \left\{R^2-2\right\}$, we get from Proposition \ref{prop8}(a) that  $h_\varphi(w)$ is the composition of the meromorphic continuation to $\overline{\mathbb{C}}\setminus \left\{R^2-2\right\}$ of $\varphi(z)$ with $\psi(w)$. The points $\mu$, $1/\mu$, and the point $R^2-2$ are non-isolated singularities of $h_\varphi(w)$ and $\varphi(z)$, respectively, since they are limits of poles of these functions.

\section{Proofs of the results of Section \ref{sec2}}\label{sec4}

We begin by reminding that if $\varphi_1$ and $\varphi$ are conformal maps of
$G_1$ onto $\mathbb{D}_1$, then they are related through a M\"{o}bius transformation, that is, for all $z\in G_1$,
\[
\varphi_1(z)=e^{i\theta}\frac{\varphi(z)-\varphi(z_0)}{1-\overline{\varphi(z_0)}\varphi(z)},\quad \varphi(z)=\frac{\varphi_1(z)+e^{i\theta}\varphi(z_0)}{e^{i\theta}+\overline{\varphi(z_0)}\varphi_1(z)},
\]
where $z_0\in G_1$  is such that $\varphi_1(z_0)=0$, and $\theta=\arg(\varphi_1'(z_0)/\varphi'(z_0))$.

Hence $\varphi(\psi(w))$ has a meromorphic continuation to the annulus $\mu<|w|<1$ if and only if so does $\varphi_1(\psi(w))$. Moreover, the meromorphic continuations $h_\varphi(w)$ and $h_{\varphi_1}(w)$ satisfy
\[
h_{\varphi_1}(w)-\varphi_1(z)=\frac{e^{i\theta}\left(1-|\varphi(z_0)|^2\right)}
{1-\overline{\varphi(z_0)}\varphi(z)}\cdot\frac{h_{\varphi}(w)-\varphi(z)}{1-\overline{\varphi(z_0)}h_{\varphi}(w)},
\]
so that for every $z\in G_1$, the equations (in the unknown $w$) $h_{\varphi_1}(w)=\varphi_1(z)$ and $h_{\varphi}(w)=\varphi(z)$ share the same solutions, multiplicities included. Therefore, the number $\mu$, the sets $\Sigma$, $\Sigma_p$, and the functions $\Phi(z)$, $r(z)$ as defined in Section \ref{sec2}, are independent of the choice of the interior conformal map $\varphi$.

Hereafter $\varphi$ is a conformal map of $G_1$ onto $\mathbb{D}_1$ that has been fixed. We shall employ the notation
\[
D_\epsilon(t):=\{z:|z-t|<\epsilon\}, \quad D^*_\epsilon(t):=D_\epsilon(t)\setminus\{t\}.
\]
As in Section  \ref{sec2} above, for $z\in \Sigma$, we denote by $\omega_{z,1},  \ldots, \omega_{z,s}$ the solutions to the equation
\begin{equation}\label{eq104}
h_\varphi(w)=\varphi(z)
\end{equation}
in the annulus $\mu<|w|<1$ that have largest modulus. The multiplicity of $h_\varphi(w)$ at $\omega_{z,k}$ is denoted by $\alpha_{z,k}$ and $\Sigma_p$, $p\geq 1$ is defined by relation (\ref{eq102}).

\begin{lem}\label{lem1}
Let $z\in \Sigma_p$, $p\geq 1$ and let $\mu'$ with  $\mu<\mu'<r(z)$ be a number satisfying that $h_\varphi(w)$ has no poles on $\mathbb{T}_{\mu'}$  and that the only solutions to (\ref{eq104}) that lie in $\mu'\leq |w|<1$ are precisely those of largest modulus  $\omega_{z,1},  \ldots, \omega_{z,s}$ ($1\leq s\leq p $). Let $\delta>0$ be so small that the closed disks $\overline{D_\delta(\omega_{z,k})}$, $1\leq k\leq s$, are pairwise disjoint and contained in the annulus $\mu'<|w|<1$, that $h_\varphi(w)$ has no poles on $\cup_{k=1}^s\overline{D_\delta(\omega_{z,k})}$ and that
\begin{equation}\label{eq26}
h'_\varphi(w)\not=0,\quad w\in D^*_\delta(\omega_{z,k}),\quad 1\leq k\leq s.
\end{equation}

There exists $\epsilon>0$ such that if $0<|\zeta-z|\leq \epsilon$, then the solutions to the equation
\begin{equation*}
h_\varphi(w)=\varphi(\zeta)
\end{equation*}
that lie in $\mu'\leq |w|\leq 1$ are simple and contained in $\cup_{k=1}^sD_\delta(\omega_{z,k})$, and each disk $D_\delta(\omega_{z,k})$ contains exactly $\alpha_{z,k}$ solutions.
\end{lem}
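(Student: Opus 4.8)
The plan is to prove the lemma by a routine Rouch\'{e}--argument-principle stability argument, regarding $h_\varphi(w)=\varphi(\zeta)$ as a small perturbation of $h_\varphi(w)=\varphi(z)$ as $\zeta\to z$. I would begin by isolating the relevant pieces of the closed annulus $A:=\{w:\mu'\le|w|\le 1\}$. Since $\mu<\mu'$ and $\rho<1$, $A$ is contained in the annulus $\{w:\mu<|w|<1/\rho\}$ on which $h_\varphi$ is meromorphic, so $h_\varphi$ has only finitely many poles $q_1,\dots,q_N$ in $A$; moreover each $q_j$ lies in the open annulus $\{w:\mu'<|w|<1\}$, because $h_\varphi$ has no poles on $\mathbb{T}_{\mu'}$ by hypothesis and none on $\mathbb{T}_1$, where $|h_\varphi|\equiv 1$ since $\psi(\mathbb{T}_1)=L_1$ and $\varphi(L_1)=\mathbb{T}_1$. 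I then fix $\eta>0$ small enough that the closed disks $\overline{D_\eta(q_j)}$ lie in $\{w:\mu'<|w|<1\}$, are pairwise disjoint, are disjoint from each $\overline{D_\delta(\omega_{z,k})}$, and satisfy $|h_\varphi|>1$ on each of them, and set
\[
K:=A\setminus\Big(\bigcup_{k=1}^{s}D_\delta(\omega_{z,k})\cup\bigcup_{j=1}^{N}D_\eta(q_j)\Big),
\]
a compact set on which $h_\varphi$ is holomorphic. Note that $\mathbb{T}_1\subset K$ and $\partial D_\delta(\omega_{z,k})\subset K$ for every $k$.

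Next I would record two facts about the unperturbed equation. First, $h_\varphi-\varphi(z)$ has no zeros on $K$: by the choice of $\mu'$ its only zeros in $\{w:\mu'\le|w|<1\}$ are $\omega_{z,1},\dots,\omega_{z,s}$, which have been removed, and on $\mathbb{T}_1$ one has $|h_\varphi-\varphi(z)|\ge 1-|\varphi(z)|>0$; hence $m_0:=\min_{w\in K}|h_\varphi(w)-\varphi(z)|>0$. Second, for each $k$ the function $h_\varphi-\varphi(z)$ has $\omega_{z,k}$ as its only zero in $\overline{D_\delta(\omega_{z,k})}$ — any other such zero would be a zero in $\{w:\mu'<|w|<1\}$ distinct from all $\omega_{z,j}$, the disks being disjoint, contradicting the choice of $\mu'$ — and the order of this zero is exactly $\alpha_{z,k}$, since $\varphi(z)$ is constant in $w$ and thus $(h_\varphi-\varphi(z))^{(i)}(\omega_{z,k})=h_\varphi^{(i)}(\omega_{z,k})$ for $i\ge 1$.

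Using continuity of $\varphi$ at $z$, I would choose $\epsilon>0$ with $\overline{D_\epsilon(z)}\subset G_1$ and $|\varphi(\zeta)-\varphi(z)|<m_0$ whenever $|\zeta-z|\le\epsilon$. For such $\zeta$: on $K$ one has $|h_\varphi-\varphi(\zeta)|\ge|h_\varphi-\varphi(z)|-|\varphi(\zeta)-\varphi(z)|>0$, and on each $\overline{D_\eta(q_j)}$ one has $|h_\varphi-\varphi(\zeta)|\ge|h_\varphi|-|\varphi(\zeta)|>0$ because $|h_\varphi|>1>|\varphi(\zeta)|$ (here I use $|\varphi(\zeta)|<1$, valid since $\zeta\in G_1$); hence every solution of $h_\varphi(w)=\varphi(\zeta)$ lying in $A$ lies in $\bigcup_{k}D_\delta(\omega_{z,k})$. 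On the circle $\partial D_\delta(\omega_{z,k})\subset K$ the inequality $|\varphi(z)-\varphi(\zeta)|<m_0\le|h_\varphi-\varphi(z)|$ permits Rouch\'{e}'s theorem for $h_\varphi-\varphi(\zeta)=(h_\varphi-\varphi(z))+(\varphi(z)-\varphi(\zeta))$, giving that $D_\delta(\omega_{z,k})$ contains exactly $\alpha_{z,k}$ solutions, counted with multiplicity. Finally, for $0<|\zeta-z|\le\epsilon$ these solutions are simple: a solution $w_0\in D_\delta(\omega_{z,k})$ of multiplicity $\ge 2$ has $h_\varphi'(w_0)=0$, hence $w_0=\omega_{z,k}$ by (\ref{eq26}), whence $\varphi(\zeta)=h_\varphi(\omega_{z,k})=\varphi(z)$ and so $\zeta=z$ by injectivity of $\varphi$ on $G_1$, contrary to hypothesis; in particular each $D_\delta(\omega_{z,k})$ then contains exactly $\alpha_{z,k}$ \emph{distinct} solutions.

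I do not anticipate a real obstacle — this is the standard Rouch\'{e} stability of root counts — but the point that needs care is the handling of the poles of $h_\varphi$ inside the annulus, which genuinely occur in the interesting regime $\mu<\rho$ where $h_\varphi$ is a properly meromorphic continuation; the clean device is that $|\varphi(\zeta)|<1$ always, so neither a neighborhood of a pole nor the circle $\mathbb{T}_1$ can carry a solution. A minor bit of bookkeeping is to ensure that all auxiliary disks lie strictly inside $\{w:\mu'<|w|<1\}$, so that $K$, the circles $\partial D_\delta(\omega_{z,k})$, and $\mathbb{T}_1$ are all governed by the single constant $m_0$.
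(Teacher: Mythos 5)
Your argument is correct and is essentially the paper's own proof: both define the compact set $K$ obtained by deleting the disks $D_\delta(\omega_{z,k})$ from the closed annulus $\mu'\le|w|\le 1$, take $m=\min_K|h_\varphi-\varphi(z)|>0$, choose $\epsilon$ by continuity of $\varphi$, and conclude via Rouch\'e's theorem together with (\ref{eq26}). The only difference is that you additionally excise neighborhoods of the poles of $h_\varphi$ and use $|h_\varphi|>1>|\varphi(\zeta)|$ there — a detail the paper's one-line proof leaves implicit (its minimum over $K$ is simply understood in the extended sense near poles), and which your version makes fully rigorous.
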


\begin{proof} Suppose $z$, $\mu'$ and  $\delta$ are as in the hypothesis of Lemma \ref{lem1}. Then, for
\begin{equation}\label{eq107}
K:=\{w:\mu'\leq |w|\leq 1\}\setminus \cup_{k=1}^sD_\delta(\omega_{z,k}),
\end{equation}
we have that  $m:=\min_{w\in K}|h_\varphi(w)-\varphi(z)|>0$. Select $\epsilon> 0$ such that $\overline{D_\epsilon (z)}\subset G_1$ and $|\varphi(\zeta)-\varphi(z)|<m$ for all $\zeta\in \overline{D_\epsilon (z)}$. Then, for this $\epsilon$, the conclusion of Lemma \ref{lem1} follows from Rouche's theorem and  (\ref{eq26}).
\end{proof}

\begin{cor}\label{cor1} Both $\Sigma$ and $\Sigma_1$ are open, the function $\Phi:z\mapsto \omega_{z,1}$ is analytic and univalent on $\Sigma_1$, and the function $r(z)$ is continuous on $G_1$.
\end{cor}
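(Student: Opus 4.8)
The plan is to extract everything from Lemma~\ref{lem1}, the holomorphic inverse function theorem, and the injectivity of $\varphi$ on $G_1$ (together with the boundary fact $\varphi(\psi(\mathbb{T}_1))=\mathbb{T}_1$).

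\emph{Openness of $\Sigma$ and $\Sigma_1$.} Fix $z_0\in\Sigma_p$ ($p\geq 1$) and apply Lemma~\ref{lem1} with admissible $\mu'$ and $\delta$; it yields $\epsilon>0$ with $\overline{D_\epsilon(z_0)}\subset G_1$ such that for every $\zeta$ with $0<|\zeta-z_0|\leq\epsilon$, the equation $h_\varphi(w)=\varphi(\zeta)$ has, in $\mu'\leq|w|\leq 1$, exactly $\alpha_{z_0,k}\geq 1$ solutions inside each $D_\delta(\omega_{z_0,k})$ and none outside these disks. Every such disk-solution has modulus in $(r(z_0)-\delta,r(z_0)+\delta)\subset(\mu',1)$, while any solution of $h_\varphi(w)=\varphi(\zeta)$ in $\mu<|w|<\mu'$ has strictly smaller modulus; hence the solutions of largest modulus are precisely those of largest modulus among the disk-solutions, and they exist. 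Thus $\zeta\in\Sigma$, $D_\epsilon(z_0)\subset\Sigma$, and $\Sigma$ is open; moreover $r(\zeta)\in(r(z_0)-\delta,r(z_0)+\delta)$, an estimate we reuse below. If in addition $z_0\in\Sigma_1$, then $s=1$ and $\alpha_{z_0,1}=1$, so Lemma~\ref{lem1} forces $\zeta$ to have exactly one largest-modulus solution, and it is simple; hence $D_\epsilon(z_0)\subset\Sigma_1$ and $\Sigma_1$ is open.

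\emph{Analyticity and univalence of $\Phi$.} Let $z_0\in\Sigma_1$ and $\omega_0:=\Phi(z_0)$. Since $\omega_0$ is a simple solution, $h_\varphi'(\omega_0)\neq 0$, so after shrinking $\delta$ the map $h_\varphi$ is biholomorphic from $D_\delta(\omega_0)$ onto a neighborhood of $\varphi(z_0)$; call its inverse $g$. By the previous paragraph, for $\zeta\in D_\epsilon(z_0)$ the value $\Phi(\zeta)$ is the unique solution of $h_\varphi(w)=\varphi(\zeta)$ in $D_\delta(\omega_0)$, hence equals $g(\varphi(\zeta))$. Therefore $\Phi=g\circ\varphi$ near $z_0$, which is analytic. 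For univalence, if $\Phi(z_1)=\Phi(z_2)$ with $z_1,z_2\in\Sigma_1$, applying $h_\varphi$ gives $\varphi(z_1)=\varphi(z_2)$, and the injectivity of $\varphi$ on $G_1$ forces $z_1=z_2$.

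\emph{Continuity of $r$.} On the open set $\Sigma$, continuity is the estimate $|r(\zeta)-r(z_0)|<\delta$ noted above (with $\delta$ arbitrarily small). It remains to treat $z_0\in\Sigma_0$, where $r(z_0)=\mu$; since $r\geq\mu$ everywhere, it suffices to exclude a sequence $\zeta_j\to z_0$ with $r(\zeta_j)\geq\mu+\eta$ for some $\eta>0$. Such $\zeta_j$ lie in $\Sigma$ and furnish solutions $w_j$ of $h_\varphi(w)=\varphi(\zeta_j)$ with $\mu+\eta\leq|w_j|<1$; pass to a subsequence $w_j\to w_*$, $\mu+\eta\leq|w_*|\leq 1$. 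The values $h_\varphi(w_j)=\varphi(\zeta_j)$ converge to $\varphi(z_0)\in\mathbb{D}_1$ and so stay bounded, so $w_*$ is not a pole of the meromorphic function $h_\varphi$; hence $h_\varphi$ is analytic at $w_*$ and $h_\varphi(w_*)=\varphi(z_0)$. If $|w_*|<1$ this says $z_0\in\Sigma$, contradicting $z_0\in\Sigma_0$; if $|w_*|=1$ then $\psi(w_*)\in L_1$ and $|\varphi(z_0)|=|\varphi(\psi(w_*))|=1$, contradicting $\varphi(z_0)\in\mathbb{D}_1$. Hence $r(\zeta)\to\mu=r(z_0)$, and $r$ is continuous on $G_1$.

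\emph{Main obstacle.} The openness statements and the inverse-function-theorem argument for $\Phi$ are routine consequences of Lemma~\ref{lem1}; the only delicate point is continuity of $r$ across $\partial\Sigma_0$, where one must prevent the largest-modulus solutions of nearby equations from escaping to $\mathbb{T}_1$ or clustering at a genuine solution over $z_0$. This is exactly where the boundedness of $\varphi$ on $G_1$ (ruling out poles of $h_\varphi$ in the limit) and the boundary correspondence $\varphi(\psi(\mathbb{T}_1))=\mathbb{T}_1$ are used.
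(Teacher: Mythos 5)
Your proposal is correct and follows essentially the same route as the paper: openness of $\Sigma$ and $\Sigma_1$ and continuity of $r$ on $\Sigma$ from Lemma~\ref{lem1}, analyticity of $\Phi$ as a local inverse of $h_\varphi$ composed with $\varphi$, injectivity by applying $h_\varphi$ and using that $\varphi$ is one-to-one on $G_1$, and continuity of $r$ at points of $\Sigma_0$ by the same compactness/contradiction argument (extracting a convergent subsequence of largest-modulus solutions and ruling out a limit on $\mathbb{T}_1$ via $|h_\varphi|=1$ there). Your version is slightly more explicit than the paper's in excluding the possibility that the limit point is a pole of $h_\varphi$, but the substance is identical.
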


\begin{proof} That $\Sigma$ and $\Sigma_1$ are open is a clear consequence of Lemma \ref{lem1}, as well as the fact that $\Phi(z)$ is locally the inverse of $\varphi^{-1}\left(h_\varphi(w)\right)$. Therefore, $\Phi$ is analytic in $\Sigma_1$, and given that $\varphi(z)=h_\varphi(\Phi(z))$ for all $z\in\Sigma_1$, $\Phi(z)$ is one-to-one in $\Sigma_1$ and
\begin{equation}\label{eq108}
\Phi'(z)=\frac{\varphi'(z)}{h'_\varphi(\Phi(z))},\quad z\in\Sigma_1.
\end{equation}

The function $r(z)$ is by definition constant in $\Sigma_0$, and Lemma \ref{lem1} trivially yields that it is continuous in $\Sigma$. We prove now that it is also continuous at every point of $\partial \Sigma_0$. Suppose, on the contrary, that there exists $z\in \partial \Sigma_0$,  a sequence $\{z_n\}_{n=1}^\infty\subset \Sigma$ and a number $\mu_1>\mu$ such that $\lim_{n\to\infty}z_n=z$ and $r(z_n)\geq \mu_1>\mu$ for all $n\in\mathbb{N}$. For each $n$, let $\omega_n\in \mathbb{T}_{r(z_n)}$ be such that $h_\varphi(\omega_n)=\varphi(z_n)$. By extracting a subsequence if necessary, we may assume that $\lim_{n\to\infty}\omega_n=\omega$, with $\mu_1\leq |\omega|\leq 1$. But then, by the continuity of $\varphi$ and $h_\varphi$, we must have $h_\varphi(\omega)=\varphi(z)$. Given that $z\in G_1$, this is only possible if $|\omega|<1$, contradicting that $z\in \Sigma_0$.
\end{proof}

\begin{lem}\label{lem2}
\begin{enumerate}[(a)]
\item For every $z\in \Sigma$ and $\delta>0$, there exist $\epsilon>0$ and a constant $M$ such that \[
|p_n(\zeta)|\leq M\sqrt{n+1}\,[r(z)+\delta]^n,\quad \zeta\in D_\epsilon(z),\ n\geq 0.
\]

\item For every $z\in \Sigma_1$ there exist $\epsilon>0$ and $0<v<1$ such that
\begin{equation*}
p_{n}(\zeta)=\sqrt{n+1}\,\Phi'(\zeta)[\Phi(\zeta)]^n\left[1+O(v^n)\right]
\end{equation*}
uniformly in $\zeta\in D_\epsilon(z)$ as $n\to\infty$.

\item For every $\sigma\in(\mu,1)$ and $\delta>0$, there exists a constant $M_1$ such that for every $\zeta$ with  $r(\zeta)\leq \sigma$,
\begin{equation*}
 |p_n(\zeta)|\leq M_1\sqrt{n+1}(\sigma+\delta)^{n}, \quad  \ n\geq 0.
\end{equation*}
\end{enumerate}
\end{lem}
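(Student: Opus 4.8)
The plan is to run all three estimates through the asymptotic integral representation (\ref{eq2}), estimating the Cauchy-type integral there by deforming the contour $\mathbb{T}_1$ inside the annulus $\mu<|w|<1/\rho$ on which $h_\varphi$ is meromorphic, and disposing of the error term $\epsilon_n$ by a short bootstrap. The basic remark is that, in that annulus, the integrand $w^n/(h_\varphi(w)-\varphi(\zeta))$ has poles \emph{only} at the solutions of $h_\varphi(w)=\varphi(\zeta)$: at a pole of $h_\varphi$ the factor $(h_\varphi(w)-\varphi(\zeta))^{-1}$ vanishes, so such points are removable. Consequently, for $\zeta\in G_1$ and any $\sigma\in(\mu,1)$ carrying no solution of $h_\varphi(w)=\varphi(\zeta)$ on $\mathbb{T}_\sigma$,
\[ \frac{1}{2\pi i}\oint_{\mathbb{T}_1}\frac{w^n\,dw}{h_\varphi(w)-\varphi(\zeta)}=\sum_{\sigma<|\omega|<1}\operatorname{Res}_{w=\omega}\frac{w^n}{h_\varphi(w)-\varphi(\zeta)}+\frac{1}{2\pi i}\oint_{\mathbb{T}_\sigma}\frac{w^n\,dw}{h_\varphi(w)-\varphi(\zeta)}, \]
where the last integral is $O(\sigma^n)$, with constant governed by $\min_{\mathbb{T}_\sigma}|h_\varphi-\varphi(\zeta)|$. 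For parts (a) and (c) I would take $\sigma$ slightly \emph{above} the relevant value of $r$, so the residue sum is empty; for part (b), $z\in\Sigma_1$, I would take $\sigma$ \emph{between} $r(z)$ and the modulus of the second largest solution, so that only the residue at the simple point $\Phi(\zeta)$ survives, equal to $[\Phi(\zeta)]^n/h_\varphi'(\Phi(\zeta))$.

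Substituting into (\ref{eq2}) and using that $\varphi'$ is bounded on the compacta involved, the integral contribution is $O(\sqrt{n+1}\,\sigma^n)$ in (a) and (c), and in (b) equals $\sqrt{n+1}\,\Phi'(\zeta)[\Phi(\zeta)]^n(1+O(v_0^n))$ for some $v_0<1$, after writing $\Phi'=\varphi'/h_\varphi'(\Phi)$ as in (\ref{eq108}) and using $\Phi'(\zeta)\neq 0$, $h_\varphi'(\Phi(\zeta))\neq 0$. It remains to absorb $\epsilon_n$. I would start from the a priori bound $|p_n(\zeta)|\le 1/\operatorname{dist}(\zeta,L_1)$ on $G_1$, which follows from the sub-mean-value inequality for $|p_n|^2$ and the normalization $\int_{G_1}|p_n|^2=\pi$; thus $p_n=O(\sqrt{n+1})$ uniformly on any compact $E\subset G_1$, and the property of $\epsilon_n$ recalled after (\ref{eq2}), applied with $\tau=1<1/\rho$, gives $\epsilon_n=O(\sqrt{n+1}\,\rho^n)$ on $E$. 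Feeding this back into (\ref{eq2}) improves the bound to $p_n=O(\sqrt{n+1}\max(\sigma,\rho)^n)$, and iterating — the successive exponents being $\max(\sigma,\rho^k)$, which decrease to $\sigma$ — yields $p_n=O(\sqrt{n+1}\,(\sigma')^n)$ for every $\sigma'>\sigma$, hence $\epsilon_n=O(\sqrt{n+1}\,(\sigma'\rho)^n)$. Since $\rho<1$, by choosing $\sigma$ close enough to the target radius and $\sigma'$ close enough to $\sigma$ one makes $\sigma'\rho$ strictly smaller than that radius, so $\epsilon_n$ is of strictly lower exponential order than the main term.

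Assembling: for (a), given $z\in\Sigma$ and $\delta>0$, the continuity of $r$ (Corollary \ref{cor1}) lets me fix $\sigma\in(r(z),r(z)+\delta/2)$ and then $\epsilon>0$ with $r(\zeta)<\sigma$ for $\zeta\in\overline{D_\epsilon(z)}$; the empty-sum contour bound together with the bootstrapped $\epsilon_n=O(\sqrt{n+1}(\sigma'\rho)^n)$, $\sigma'\rho<r(z)+\delta$, yields $|p_n(\zeta)|\le M\sqrt{n+1}(r(z)+\delta)^n$. For (c), the set $\{\zeta\in G_1:r(\zeta)\le\sigma\}$ is compact because $r\to 1$ near $L_1$, so the same argument runs uniformly over it with a radius $\sigma'\in(\sigma,\sigma+\delta/2)$. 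For (b), with $z\in\Sigma_1$, Lemma \ref{lem1} provides $\epsilon>0$ and a radius $\mu<\sigma<r(z)$ such that, for every $\zeta\in D_\epsilon(z)$, $\Phi(\zeta)$ is the unique solution of $h_\varphi(w)=\varphi(\zeta)$ in $|w|>\sigma$ and it is simple; collecting the dominant residue, the $O(\sqrt{n+1}\sigma^n)$ remainder and the subordinate $\epsilon_n$, and factoring out $\sqrt{n+1}\,\Phi'(\zeta)[\Phi(\zeta)]^n$ (nonvanishing and bounded below on $\overline{D_\epsilon(z)}$), leaves $1+O(v^n)$ with $v<1$.

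The only real obstacle is the apparent circularity of estimating $p_n$ while its own error term $\epsilon_n$ in (\ref{eq2}) is controlled only in terms of the size of $p_n$; the bootstrap above breaks it, and it converges precisely because $\rho<1$. The remaining points are routine: choosing the contour radius $\sigma$ and the neighbourhoods small enough, and using Corollary \ref{cor1} and Lemma \ref{lem1} to guarantee that the solution set of $h_\varphi(w)=\varphi(\zeta)$ — in particular, which solutions have largest modulus and whether they are simple — varies stably with $\zeta$.
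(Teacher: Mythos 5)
Your proposal is correct and rests on the same engine as the paper's proof: the representation (\ref{eq2}), deformation of $\mathbb{T}_1$ to a smaller circle with the residues at the largest-modulus solutions of $h_\varphi(w)=\varphi(\zeta)$ supplying the main term (the poles of $h_\varphi$ being removable for the integrand, as you note), and an iteration that breaks the apparent circularity in the control of $\epsilon_n$ because each pass improves the admissible exponent by a factor of $\rho<1$. Your bootstrap with exponents $\max(\sigma,\rho^k)$ is precisely the paper's induction on $k$ over the scales $\max\{\mu,\rho^k\}$, just run at a fixed target radius instead of globally over $\Sigma$. Two points of execution differ, both legitimately and arguably more cleanly: you seed the iteration with the elementary sub-mean-value bound $|p_n(\zeta)|\le \operatorname{dist}(\zeta,L_1)^{-1}$ coming from $\int_{G_1}|p_n|^2\,dxdy=\pi$, whereas the paper starts from Carleman's estimates (\ref{eq8})--(\ref{eqq38}) on $\Omega_\rho\cap G_1$; and for part (c) you estimate the contour integral directly and uniformly on the compact sublevel set $\{\zeta:r(\zeta)\le\sigma\}$ (valid, since no solution of $h_\varphi(w)=\varphi(\zeta)$ lies in $\sigma<|w|<1$ there and $(h_\varphi(w)-\varphi(\zeta))^{-1}$ is jointly continuous on $\mathbb{T}_{\sigma'}\times\{r\le\sigma\}$), whereas the paper covers only the level set $\{r=\sigma\}$ by disks from part (a) and propagates the bound inward by the maximum modulus principle. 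Both routes yield identical conclusions; the only care point, which you already flag, is that in part (b) the inner radius $\sigma$ and the disk $D_\epsilon(z)$ must be chosen via Lemma \ref{lem1} so that both $\sigma$ and the exponent $\tau\rho$ governing $\epsilon_n$ stay strictly below $\min_{\overline{D_\epsilon(z)}}|\Phi|$.
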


\begin{proof}
We first observe that
\[
\Sigma=\{z:\mu<r(z)<1\}= \cup_{k=1}^\infty\left\{z:\max\{\mu,\rho^k\}<r(z)<1\right\},
\]
and proceed to prove by mathematical induction on $k$ that if $k\geq 1$,
then Parts (a) and (b) of Lemma \ref{lem2} hold true for every  $z$ with $\max\{\mu,\rho^k\}<r(z)<1$, while
Part (c) holds true  for every $\sigma$ with $\max\{\mu,\rho^k\}<\sigma<1$. That this is true for $k=1$ clearly follows from (\ref{eq8}) and (\ref{eqq38}), since $\mu\leq \rho$ and
\[
\left\{z:\rho<r(z)<1\right\}=\Omega_\rho\cap G_1.
\]

Then, suppose it is true for some given  $k\geq 1$. Let $z\in \Sigma$ be a fixed number such that
\[
\max\{\mu,\rho^{k+1}\}<r(z)\leq \rho^k.
\]
Select $\eta >0$ so small that
\begin{equation}\label{eq9}
\rho(\rho+\eta)^{k}<r(z)\quad (\Rightarrow \rho+\eta<1).
\end{equation}

Let  $\omega_{z,1}, \ldots, \omega_{z,s}$ be the solutions to the equation $h_\varphi(w)=\varphi(z)$ in $\mu<|w|<1$ that have largest modulus, so that $|\omega_{z,k}|=r(z)$, $1\leq k\leq s$. Choose  $\mu'$ and $\delta$ satisfying the hypothesis of Lemma \ref{lem1}, with the particularity that
\begin{equation}\label{eq21}
\rho(\rho+\eta)^{k}<\mu', \quad \cup_{k=1}^sD_\delta(\omega_{z,k})\subset \{w:\mu'<|w|<(\rho+\eta/2)^k\}.
\end{equation}
Then, by the induction hypothesis that Lemma \ref{lem2}(c) holds whenever \[\max\{\mu,\rho^k\}<\sigma<1,\] there is a constant $M_1$ such that for every $\zeta$ with  $r(\zeta)\leq (\rho+\eta/2)^k$,
\begin{equation*}
|p_n(\zeta)|\leq M_1\sqrt{n+1}(\rho+\eta)^{kn},\quad n\geq 0,
\end{equation*}
and so we obtain from (\ref{eq2}) that 
\begin{equation}\label{eq106}
 p_{n}(\zeta)=\frac{\sqrt{n+1}\,\varphi'(\zeta)}{2\pi
 i}\oint_{\mathbb{T}_1}\frac{w^{n}dw}{h_\varphi(w)-\varphi(\zeta)}
 +O\left(\sqrt{n}(\rho+\eta)^{kn}\rho^n\right)
 \end{equation}
uniformly on $\{\zeta:r(\zeta)\leq (\rho+\eta/2)^k\}$ as $n\to\infty$.

 Now, corresponding to the numbers $z$, $\mu'$ and $\delta$ above, choose an $\epsilon>0$ for which the thesis of Lemma \ref{lem1} holds true, so that (recall (\ref{eq21}))
for all $\zeta\in \overline{D_\epsilon(z)}$,  $r(\zeta)<(\rho+\eta/2)^k$  and the function $(h_\varphi(w)-\varphi(\zeta))^{-1}$ is analytic on the compact set $K$ defined as in (\ref{eq107}). Hence we obtain from (\ref{eq106}) that  uniformly in $\zeta\in \overline{D_\epsilon(z)}$ as $n\to\infty$,
\begin{equation}\label{eq11}
\begin{split}
 p_{n}(\zeta)={}&\frac{\sqrt{n+1}\,\varphi'(\zeta)}{2\pi
 i}\oint_{\mathbb{T}_{\mu'}}\frac{w^{n}dw}{h_\varphi(w)-\varphi(\zeta)}+O\left(\sqrt{n}(\rho+\eta)^{kn}\rho^n\right)\\
{} & +\frac{\sqrt{n+1}\,\varphi'(\zeta)}{2\pi
 i}\sum_{k=1}^s\oint_{\partial D_{\delta}(\omega_{z,k})}\frac{w^{n}dw}{h_\varphi(w)-\varphi(\zeta)}\,.
\end{split}
\end{equation}

Now, the function $(h_\varphi(w)-\varphi(\zeta))^{-1}$ is continuous as a function of $(w,\zeta)$ on the compact set $K\times \overline{D_\epsilon(z)}$  and we obtain from (\ref{eq11}), (\ref{eq9}) and (\ref{eq21}) that uniformly in $\zeta\in \overline{D_\epsilon(z)}$ as $n\to\infty$,
\begin{align*}
p_{n}(\zeta)={}&O(\sqrt{n}\mu'^n) + O\left(\sqrt{n}[r(z)+\delta]^n\right) +O\left(\sqrt{n}(\rho+\eta)^{kn}\rho^n\right)\\
                     ={}&O\left(\sqrt{n+1}[r(z)+\delta]^n\right).
\end{align*}

If $z\in \Sigma_1$, i.e., if $s=1$, then every $\zeta\in D_\epsilon(z)$ belongs to $\Sigma_1$ as well, so that  $(h_\varphi(w)-\varphi(\zeta))^{-1}$ is analytic on $\overline{D_{\delta}(\omega_{z,1})}$, except at the point $\Phi(\zeta):=\omega_{\zeta,1}$, where it has a simple pole. Therefore (recall (\ref{eq108})),
\begin{align*}
\frac{1}{2\pi
 i}\oint_{\partial D_{\delta}(\omega_{z,1})}\frac{w^{n}dw}{h_\varphi(w)-\varphi(\zeta)}={}&(\omega_{\zeta,1})^n
\lim_{w\to \omega_{\zeta,1}}\frac{w-\omega_{\zeta,1}}{h_\varphi(w)-h_\varphi(\omega_{\zeta,1})}\\
={} &\frac{\Phi'(\zeta)[\Phi(\zeta)]^n}{\varphi'(\zeta)}.
\end{align*}
Hence we obtain from (\ref{eq11}) and (\ref{eq21}) that uniformly in $\zeta\in D_\epsilon(z)$ as $n\to\infty$,
\begin{equation*}
p_{n}(\zeta)=\sqrt{n+1}\,\Phi\,'(\zeta)[\Phi(\zeta)]^n\left[1+O\left(v^n\right)\right],
\end{equation*}
with $0<v=\mu'/(r(z)-\delta)<1$. Given that $\delta$ could have been chosen arbitrarily small, we have proven that Parts (a) and (b) of Lemma \ref{lem2} hold true if $\max\{\mu,\rho^{k+1}\}<r(z)<1$.

Now, suppose $\sigma$ is such that $\max\{\mu,\rho^{k+1}\}<\sigma\leq \rho^k$, and let $\delta>0$ be given. By the continuity of the function $r(z)$ and the fact that $r(z)$  approaches $1$ as $z$ approaches $\partial G_1$, we have that the set $\{z:r(z)=\sigma\}$ is compact, and we can find finitely many points $z_1,\ldots,z_m$ in this set, and positive numbers $\epsilon_1,\ldots,\epsilon_m$,  $M_1$, such that  $\{z:r(z)=\sigma\}\subset\, \cup_{j=1}^mD_{\epsilon_j}(z_j)$, and
\begin{equation}\label{eq22}
|p_n(\zeta)|\leq M_1\sqrt{n+1}\,(\sigma+\delta)^{n}
\end{equation}
for all $\zeta \in \cup_{j=1}^mD_{\epsilon_j}(z_j)$, $n\geq 0$. But the set $\{z:r(z)<\sigma\}$ is a bounded open set whose boundary is precisely $\{z:r(z)=\sigma\}$, so that  by the maximum modulus principle for analytic functions, (\ref{eq22}) also holds for all $\zeta$ with $r(\zeta)\leq \sigma$.
\end{proof}

\begin{proof}[Proof of Theorem \ref{thm3}] Part (a) of Theorem \ref{thm3} is equivalent to Lemma \ref{lem2}(b). We then pass to prove Part (b).
Let $z\in G_1$. From the definition of $r(z)$, we see that the function (in the variable $w$) $\left(h_{\varphi}(w)-\varphi(z)\right)^{-1}$ is analytic in the annulus $r(z)<|w|<1/\rho$, with a singularity on the circle $\mathbb{T}_{r(z)}$ in case $r(z)>0$, and therefore, it has a Laurent expansion in said annulus,  say $\sum_{k=-\infty}^{\infty}a_k(z)w^k$, whose coefficients
\begin{equation}\label{eq24}
a_{-n}(z)=\frac{1}{2\pi
 i}\oint_{\mathbb{T}_1}\frac{w^{n-1}dw}{h_\varphi(w)-\varphi(z)}, \quad n\geq 0
\end{equation}
satisfy
\begin{equation}\label{eq25}
\limsup_{n\to \infty}|a_{-n}(z)|^{1/n}=r(z).
\end{equation}

Let $\tau$ be a number satisfying that $r(z)<\tau<1$, and in case $r(z)\not=0$, that  $\tau\rho<r(z)$. By Lemma \ref{lem2}(c), we can find a constant $M$ such that
\begin{equation*}
|p_n(z)|\leq M\sqrt{n+1}\tau^{n},
\end{equation*}
which combined with (\ref{eq2}) and (\ref{eq24}) yields
\begin{equation*}
 p_{n}(z)=\sqrt{n+1}\,\varphi'(z)a_{-n-1}(z)
 +O\left(\sqrt{n}(\tau\rho)^n\right)\quad (n\to\infty).
\end{equation*}
This, in view of (\ref{eq25}) and the fact that $\tau$ can be taken arbitrarily closed to $r(z)$, forces $\limsup_{n\to\infty}|p_n(z)|^{1/n}=r(z)$ .
\end{proof}

\begin{proof}[Proof of Proposition \ref{prop1}] From (\ref{eq66}) and (\ref{eq70}) we get that
\begin{equation}\label{eq31}
\varphi(\psi(w))=\frac{1}{\overline{\varphi(\psi(1/\overline{w}))}},\quad \rho<|w|<1/\rho.
\end{equation}

If $\varphi(\psi(w))$ has a meromorphic continuation, denoted by $h_\varphi(w)$, to the domain $\mathfrak{D}$ (resp. to $\mathfrak{D}^*$), then, by virtue of (\ref{eq31}), the function $w\mapsto 1/\overline{h_\varphi(1/\overline{w})}$ provides the  meromorphic continuation of $\varphi(\psi(w))$ to $\mathfrak{D}^*$ (resp. to $\mathfrak{D}$),  and  (\ref{eq32}) is satisfied.

Suppose now that $\varphi(z)$ is meromorphic in $\overline{G}_1\cup\psi(\mathfrak{D}^*)$. Then the composition $\varphi(\psi(w))$, originally defined for $\rho<|w|<1/\rho$, now makes perfect sense for $z\in \mathfrak{D}^*$, and it is obviously meromorphic.  Reciprocally, if $\varphi(\psi(w))$ has a meromorphic continuation $h_\varphi(w)$ to $\mathfrak{D}^*$, then $h_\varphi(\phi(z))$ is a meromorphic function in $\psi(\mathfrak{D}^*)$, and
\[
h_\varphi(\phi(z))=\varphi(\psi(\phi(z)))=\varphi(z),\quad z\in \Omega_1\cap G_{1/\rho}.
\]
\end{proof}

\begin{proof}[Proof of Proposition \ref{prop6}] By the first inclusion in Eq. (\ref{eqq31}), the composition $\varphi(\psi(w))$ is well-defined and analytic in $\mathfrak{D}_a$. Hence there exists a largest number  $t_0\in[a,b]$ such that $\varphi(\psi(w))$ has a meromorphic continuation to every $\mathfrak{D}_{t}$ with $a\leq t<t_0$. Suppose $t_0<b$. From assumption (\ref{eqq31}), we see that $\psi(\overline{\mathfrak{D}}_{t_0})\subset \overline{G}_1\cup \psi(\mathfrak{D}^*_{t_0})$, which combined with assumption (\ref{eqq32}) yields the existence of some $t_0<t_1<b$ such that $\psi(\mathfrak{D}_{t_1})\subset \overline{G}_1\cup \psi(\mathfrak{D}^*_{t_0})$. Since $\varphi(\psi(w))$ is meromorphic in $\mathfrak{D}_{t_0}$, by Proposition \ref{prop1}, $\varphi(z)$ is then meromorphic in $\overline{G}_1\cup \psi(\mathfrak{D}^*_{t_0})$, so that the composition $\varphi(\psi(w))$ is well-defined and meromorphic in $\mathfrak{D}_{t_1}$, contradicting the definition of $t_0$. Hence $t_0=b$ and $\varphi(\psi(w))$ has a meromorphic continuation to $\mathfrak{D}=\cup_{a\leq t<b}\mathfrak{D}_{t}$.
 \end{proof}

\begin{proof}[Proof of Proposition \ref{prop2}]  That $\mu\leq \bar{\mu}$  follows by applying Proposition \ref{prop6} to the collection of annuli $\mathfrak{D}_t:=\{w:\rho+\bar{\mu}-t<|w|<1\}$, $\bar{\mu}\leq t<\rho$.

To prove that $\Sigma_p\supset C_p^{\bar{\mu}}$, $p\geq 1$, we first make a couple of observations. The first one is that, in view of Proposition \ref{prop1}, $\varphi$ admits a meromorphic continuation (also denoted by $\varphi$) to $G_{1/\mu}$,  and since $L_r\subset G_{1/r}$ for $\bar{\mu}<r<1$,  we then have $\psi(\bar{\mu}<|w|<1/\mu)\subset G_{1/\mu}$ and
\begin{equation}\label{eqq3}
h_\varphi(w)=\varphi(\psi(w)),\quad \bar{\mu}<|w|<1/\mu.
\end{equation}
The second observation is stated as a claim.

\begin{claim} 
If $z\in G_1$ and $w_z$ with $\bar{\mu}<|w_z|<1$  are such that $h_\varphi(w_z)=\varphi(z)$, then either  $\psi(w_z)=z$ or the equation $h_\varphi(w)=\varphi(z)$ has a solution in $|w_z|<|w|<1$.
\end{claim}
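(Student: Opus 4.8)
The plan is to produce the required solution of larger modulus \emph{explicitly}, by performing twice the operation ``pull the point back through $\psi$ and then invert it in the unit circle'', and by tracking what this does to $h_\varphi$. Throughout I would use the two identities available here: the composition formula $h_\varphi(w)=\varphi(\psi(w))$ for $\bar{\mu}<|w|<1/\mu$, which is (\ref{eqq3}), and the reflection identity $h_\varphi(w)=1/\overline{h_\varphi(1/\overline{w})}$ for $\mu<|w|<1/\mu$, which follows from Proposition \ref{prop1} applied with $\mathfrak{D}=\{w:\mu<|w|<1\}$. Since $\mu$, the sets $\Sigma_p$, and the functions $\Phi$, $r$ do not depend on the choice of interior map, I would also normalize $\varphi$ so that $\varphi(z)\neq 0$, which keeps all the quantities below finite.

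First I would set $r:=|w_z|$ and $u:=\psi(w_z)$, so that $\varphi(u)=h_\varphi(w_z)=\varphi(z)$ by (\ref{eqq3}). If $u\in\overline{G}_1$, then, since $\varphi$ is injective on $\overline{G}_1$, necessarily $u=z$, i.e. $\psi(w_z)=z$ --- the first alternative. So I may assume $u\in\Omega_1$ (and $u\neq\infty$). Since $u=\psi(w_z)\in L_r$ and $L_r\subset G_{1/r}$ by hypothesis, $u$ lies in $\Omega_1\cap G_{1/r}=\psi(\{w:1<|w|<1/r\})$ (the last equality because $\psi$ is univalent on $\Delta_\rho$), so that $\phi(u)=\psi^{-1}(u)$ satisfies $1<|\phi(u)|<1/r$. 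Then I would put $\widetilde{w}:=1/\overline{\phi(u)}$, hence $r<|\widetilde{w}|<1$, and compute, using (\ref{eq32}) and then (\ref{eqq3}),
\[
h_\varphi(\widetilde{w})=\frac{1}{\overline{h_\varphi(\phi(u))}}=\frac{1}{\overline{\varphi(\psi(\phi(u)))}}=\frac{1}{\overline{\varphi(u)}}=\frac{1}{\overline{\varphi(z)}}.
\]

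Next I would repeat the construction with $\widetilde{w}$ in place of $w_z$. Set $\widetilde{u}:=\psi(\widetilde{w})$; by (\ref{eqq3}), $\varphi(\widetilde{u})=h_\varphi(\widetilde{w})=1/\overline{\varphi(z)}$, which has modulus larger than $1$ (as $z\in G_1$), so $\widetilde{u}\notin\overline{G}_1$, i.e. $\widetilde{u}\in\Omega_1$. Because $\bar{\mu}<r<|\widetilde{w}|<1$, the hypothesis gives $\widetilde{u}\in L_{|\widetilde{w}|}\subset G_{1/|\widetilde{w}|}$; moreover $1/|\widetilde{w}|=|\phi(u)|<1/r$, so $\widetilde{u}\in\Omega_1\cap G_{1/r}$ and hence $\phi(\widetilde{u})\in\{w:1<|w|<1/r\}$. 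Finally I would set $w':=1/\overline{\phi(\widetilde{u})}$, so $r<|w'|<1$, and conclude, exactly as before,
\[
h_\varphi(w')=\frac{1}{\overline{h_\varphi(\phi(\widetilde{u}))}}=\frac{1}{\overline{\varphi(\widetilde{u})}}=\varphi(z).
\]
Thus $w'$ solves $h_\varphi(w)=\varphi(z)$ with $|w_z|=r<|w'|<1$, which is the assertion of the Claim.

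The point I expect to need care with --- really the only nontrivial step --- is the choice of the auxiliary points together with the bookkeeping of moduli. A single ``pull back and invert'' sends a solution of $h_\varphi(w)=\varphi(z)$ to a solution of $h_\varphi(w)=1/\overline{\varphi(z)}$, not back to $\varphi(z)$, which is why the operation must be iterated twice; and the geometric hypothesis $L_t\subset G_{1/t}$ has to be invoked at the two radii $t=|w_z|$ and $t=|\widetilde{w}|$, since it is exactly what places the intermediate points $u$ and $\widetilde{u}$ in the annular region $\psi(\{w:1<|w|<1/r\})$ on which $\phi=\psi^{-1}$ is single-valued and the bound $|\phi(\,\cdot\,)|<1/r$ holds, so that inverting in the unit circle returns a point strictly inside $\{w:r<|w|<1\}$. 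The remaining work is only to check, at each step, that the points involved lie in the annuli where (\ref{eqq3}) and (\ref{eq32}) are valid, which is immediate from $\mu\leq\bar{\mu}$ and the moduli recorded above.
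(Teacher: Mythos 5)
Your proof is correct and is essentially the paper's own argument: both handle the case $\psi(w_z)\in\overline{G}_1$ by injectivity of $\varphi$, and otherwise perform the same two-fold ``reflect through $\psi$ across $L_1$'' construction (your $\widetilde{w}$ and $w'$ are the paper's $w'_z$ and $w''_z$), using $L_t\subset G_{1/t}$ to gain modulus at each step and the identities (\ref{eqq3}) and (\ref{eq32}) to track that $h_\varphi$ passes from $\varphi(z)$ to $1/\overline{\varphi(z)}$ and back. The only cosmetic differences are your harmless normalization $\varphi(z)\neq 0$ and the slightly weaker (but sufficient) bound $|w'|>r$ in the last step.
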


In effect, suppose first that $\psi(w_z)\in G_1$. Then, by (\ref{eqq3}) and the fact that $h_\varphi(w_z)=\varphi(z)$, we must have $\psi(w_z)=z$. Next, assume  $\psi(w_z)\not\in G_1$.  Then  $\psi(w_z)\not\in L_1 =\partial G_1$ either, because $\varphi$ maps $L_1$ onto the unit circle and $|h_\varphi(w_z)|=|\varphi(\psi(w_z))|=|\varphi(z)|<1$. Moreover, since $\psi(w_z)\in L_{|w_z|}\subset G_{1/|w_z|}$ and $\psi$ maps $1< |w|<1/|w_z|$ conformally onto $G_{1/|w_z|}\setminus \overline{G}_1$, we see that there is a unique number $w'_z$ with $|w_z|<|w'_z|<1$ such that $\psi\left(1/\overline{w'_z}\right)=\psi(w_z)$.  By (\ref{eqq3}) and (\ref{eq32}), we then have
\begin{equation*}
h_\varphi(w'_z)=\frac{1}{\overline{h_\varphi\left(1/\overline{w'_z}\right)}}
=\frac{1}{\overline{h_\varphi(w_z)}}=\frac{1}{\overline{\varphi\left(z\right)}}\,.
\end{equation*}

This implies that $\psi(w'_z)\not\in\overline{G}_1$, which combined with the fact that $\psi(w'_z)\in L_{|w'_z|}\subset G_{1/|w'_z|}$ yields the existence of a unique $w''_z$ with $|w'_z|<|w''_z|<1$ such that $\psi\left(1/\overline{w''_z}\right)=\psi(w'_z)$, and so\begin{equation*}
h_\varphi(w''_z)=\frac{1}{\overline{h_\varphi\left(1/\overline{w''_z}\right)}}
=\frac{1}{\overline{h_\varphi(w'_z)}}=\varphi(z)\,,
\end{equation*}
which proves the claim.

We now proceed to prove that
\begin{equation}\label{eqq5}
C^{\bar{\mu}}_p=\left\{z\in \Sigma_p: r(z)>\bar{\mu}\right\}.
\end{equation}

Suppose  $z\in \Sigma_p$ is such that $r(z)>\bar{\mu}$, that is, $z\in G_1$ and there are finitely many numbers $\omega_{z,1}, \ldots,\omega_{z,s} $, with $\bar{\mu}<r(z)=|\omega_{z,1}|=\cdots=|\omega_{z,s}|<1$, which are the only solutions that the equation $h_\varphi(w)=\varphi(z)$ has in  $|\omega_{z,1}|\leq|w|<1$, and moreover $\sum_{k=1}^s\alpha_{z,k}=p$, with $\alpha_{z,k}$ being the multiplicity of $h_\varphi$ at $\omega_{z,k}$.

Then, by $(\ref{eqq3})$, the only possible solutions that the equation $\psi(w)=z$ could have in  $|\omega_{z,1}|\leq|w|<1$ are precisely these $\omega_{z,k}$. As a matter of fact, in view of the claim proven above, we have $\psi(\omega_{z,k})=z$ for all $1\leq k\leq s$, and it clearly follows from (\ref{eqq3}) that the multiplicity of $\psi$ at $\omega_{z,k}$ is $\alpha_{z,k}$.  Thus, $z\in C^{\bar{\mu}}_p$.

Assume now that $z\in C^{\bar{\mu}}_p$, that is, $z\in G_1$ and there are finitely many numbers $v_{z,1}, \ldots,v_{z,s} $, with $\bar{\mu}<|v_{z,1}|=\cdots=|v_{z,s}|<1$, which are the only solutions that the equation $\psi(w)=z$ has in $|v_{z,1}|\leq|w|<1$, and moreover $\sum_{k=1}^s\beta_{z,k}=p$, with $\beta_{z,k}$ being the multiplicity of $\psi$ at $v_{z,k}$. These $v_{z,k}$'s are the only possible solutions that the equation $h_\varphi(w)=\varphi(z)$ could have in $|v_{z,1}|\leq |w|<1$, because by the claim proven above, among such solutions those of largest modulus must be mapped by $\psi$ to $z$. Moreover, by (\ref{eqq3}), we have that for all $1\leq k\leq s$, $h_\varphi(v_{z,k})=\varphi(z)$ and that $\beta_{z,k}$ is the multiplicity of $h_\varphi$ at $v_{z,k}$.  Hence $z\in \Sigma_p$, and (\ref{eqq5}) is proven.

Since every element of $\Sigma_p$, $p\geq 1$ satisfies $r(z)>\mu$, (\ref{eqq5}) implies that if $\mu=\bar{\mu}$, then $\Sigma_p=C^{\bar{\mu}}_p$ for all $p\geq1$, which in turn implies that $\Sigma_0=C^{\bar{\mu}}_0$.

Finally, suppose that a sequence $\{w_n\}_{n\geq 1}$ satisfying (\ref{eq35}) is found. Then,  given that $L_r\subset G_{1/r}$ for  $\bar{\mu}<r<1$, we have $\psi\left(1/\overline{w_n}\right)=\psi(w_{n+1})\in G_{1/|w_{n+1}|}$, so that $|w_n|>|w_{n+1}|$, $n\geq 1$. Moreover, in view of (\ref{eqq3}) and (\ref{eq32}),
\begin{equation*}	 h_\varphi(w_{n+2})=h_\varphi\left(1/\overline{w_{n+1}}\right)=\frac{1}{\overline{h_\varphi(w_{n+1})}}=\frac{1}{\overline{h_\varphi\left(1/\overline{w_{n}}\right)}}=h_\varphi(w_n),\quad n\geq 1.
\end{equation*}
Hence $h_\varphi(w)$ remains constant along an infinite set of points contained in $\bar{\mu}<|w|<1$. This is only possible if $\lim_{n\to\infty}|w_n|=\bar{\mu}=\mu$.
\end{proof}

\begin{proof}[Outline of the proof of Theorems \ref{thm4} and \ref{thm2}] In order to derive Theorems  \ref{thm4} and  \ref{thm2} from Theorem \ref{thm3}, one needs first to establish several structural properties of the sets  $\Sigma_p$, $p\geq 1$. These properties have been previously established for different, but similarly  defined sets. For instance,  in \cite{Ullman2}, Ullman studied the zero distribution of the Faber polynomials $F_n(z)$, $n=0,1,2,\ldots$, associated with a Laurent series about $\infty$ of the form
\begin{equation}\label{eq101}
g(w)=w+b_0+b_1w^{-1}+b_2 w^{-2}+\cdots
\end{equation}
with radius of convergence $\varrho:=\limsup_{n\to\infty}|b_n|^{1/n}<\infty$. The function $g$ is locally invertible at $\infty$, and if $g^{-1}(z)$ denotes its inverse, then $F_n(z)$ is defined as the polynomial part of the Laurent expansion at $\infty$ of $[g^{-1}(z)]^n$.

For each $p\geq 1$, Ullman introduced the set $C_p$ consisting of those points $z\in \mathbb{C}$ for which the solutions of largest modulus that the equation $g(w)=z$ has in $|w|>\varrho$, say $u_{z,1},\ldots,u_{z,s}$,  have total multiplicity $p$. Note the similarity of this definition with that of $\Sigma_p$ given in (\ref{eq102}). Setting $C_0:=\mathbb{C}\setminus \cup_{p\geq 1}C_p$, $\Psi(z):=u_{z,1}$ for all $z\in C_1$ and
\begin{equation*}
\tilde{\varrho}(z):=\left\{\begin{array}{ll}
               |u_{z,1}|, & z\in \cup_{p\geq 1}C_p, \\
               \varrho,  & z\in C_0,
             \end{array}\right.
\end{equation*}
Ullman proved that (see (3.7), (3.8), (5.1) and (5.4) in \cite{Ullman2})
\begin{equation}\label{eq126}
\limsup_{n\to\infty}|F_n(z)|^{1/n}=\tilde{\varrho}(z),\quad z\in\mathbb{C},
\end{equation}
and more specifically, for points in $C_1$, that
\begin{equation}\label{eq127}
\lim_{n\to\infty}F_n(z)/[\Psi(z)]^n=1
\end{equation}
locally uniformly on $C_1$. These asymptotic formulas are the analogue of Theorem \ref{thm3} for the Faber polynomials.

Ullman also proved that the sets $C_p$ have the following properties \cite[Lemmas 4.1, 4.2]{Ullman2}: \emph{Every  $z\in C_p$, $p\geq 1$ has a neighborhood that is fully contained in $\cup_{q=1}^p C_ q$.  Every $C_p$ with $p>1$ has empty interior. Every neighborhood of a point $z\in C_p$, $p>1$ contains points that are not in $C_1$.}

Combining these properties with (\ref{eq126})-(\ref{eq127}),  Ullman succeeded in proving that \cite[Theorem 1(b)]{Ullman2} every point of $\partial C_1$ is an accumulation point of the zeros of the $F_n$'s. Following Ullman's arguments, one can easily see that the properties just stated for the $C_p$'s are word for word valid for the sets $\Sigma_p$ as well, and that these properties in conjunction with Theorem \ref{thm3} imply the validity of Theorem \ref{thm4}.

In another paper \cite{Ullman1} dealing with the limiting behavior of the eigenvalues of Toeplitz matrices associated with a semi-infinite Laurent series of the form  $\sum_{n=-\infty}^k c_nw^n$ ($\limsup_{n\to\infty}|c_{-n}|^{1/n}<\infty$), Ullman  considered the smallest possible $\tau\geq 0$ for which there exists a meromorphic function $F(w)$  on $|w|>\tau$ having this expansion at $\infty$. He defined a corresponding set $C$  that for the case $k=1$ (i.e., a simple pole at $\infty$) consists of those points $z\in \mathbb{C}$ for which the equation $F(w)=z$ has exactly one solution in $|w|>\tau$ of largest modulus, and this solution is simple (see the definition of the set $C$ in \cite[Definition 1]{Ullman1}).  He proved two important lemmas \cite[Lemmas 7 and 8]{Ullman1} about the structure of the boundary of the set $C$, which can  be established in a similar way for both the set $C_1$ (i.e., the $C_p$ corresponding to  $p=1$) and the set $\Sigma_1$.
Using the extension of these lemmas to $C_1$ (see \cite[Lemmas 2.2 and 2.4]{KS}) together with Ullman's asymptotic formulas (\ref{eq126})-(\ref{eq127}), Kuijlaars and Saff proved the analogue of Theorem \ref{thm2} for the Faber polynomials $F_n$ associated with (\ref{eq101}) (see \cite[Theorems 1.3, 1.4 and 4.1]{KS}). Their arguments are based on general facts of logarithmic potential theory and can be used essentially without variation to derive our Theorem \ref{thm2}.
\end{proof}

\section{Proofs of the results of Section \ref{sec3}}\label{sec5}

\begin{proof}[Proof of Proposition \ref{prop3}] Given that $\varphi(z)=(z^2-1)/R$ is an entire function, Proposition \ref{prop1} implies that  $\mu=0$ and $h_\varphi(w)$ is meromorphic in $\mathbb{C}\setminus\{0\}$. By uniqueness of the meromorphic continuation, we then have
\begin{equation}\label{eq33}
h_\varphi(w)=\varphi(\psi(w))=\frac{[\psi(w)]^2-1}{R},\quad \rho<|w|<\infty.
\end{equation}

This and (\ref{eq32}) imply that $h_\varphi(w)$ is indeed a meromorphic function in $\overline{\mathbb{C}}$, whose only poles are $\infty$ and some point $a$, $0<|a|<1$, and whose only zeros are $1/\overline{a}$ and  $0$. $\infty$ and $0$ are of multiplicity $2$, while $a$ and $1/\overline{a}$ are simple. Hence for some complex number $\beta$,
\begin{equation*}
h_\varphi(w)=\frac{\beta(1-\overline{a}w)w^2}{w-a},\quad w\in \overline{\mathbb{C}}.
\end{equation*}

By symmetry, $\psi(w)=\overline{\psi(\overline{w})}$, which in view of the normalization $\psi'(\infty)>0$ implies that $\psi$ maps $(-\infty,-1)$ onto $(-\infty,\sqrt{1-R})$. Hence $-1<a<0$. Also, given that $|h_\varphi(w)|=1$ for $|w|=1$ and that by (\ref{eq33}) 
\[
\lim_{w\to\infty}h_\varphi(w)/w^2>0,
\]
we then must have $\beta=1$, and so (\ref{eq109}) is proven.

Equality (\ref{Eq18}) follows directly from  (\ref{eq109}) and (\ref{eq33}). To find the value of $a$, first observe that $0$ lies outside the curve $L_1$. Let $b$ be the point in $|w|>1$ such that $\psi(b)=0$ (then, $1/a<b<-1$). By (\ref{eq33}), $b$ is a double zero of $1+Rh_\varphi(w)$, so that (\ref{eq42}) holds for some $c$, and the relations
\begin{align}
aRb^3-Rb^2-b+a & =0\nonumber\\
3aRb^2-2Rb-1 &=0\nonumber\\
2ab^2-\left(3a^2+1\right)b+2a & =0 \label{eq38}
\end{align}
are satisfied. From these we get
\begin{equation*}
27a^4-18a^2-4\left(R+R^{-1}\right)a-1=0,
\end{equation*}
and it is easy to see that this Eq. (in the unknown $a$) has only two real solutions, one positive, the other contained in $(-1/3,0)$. This completes the proof of Proposition \ref{prop3}.

The equalities in (\ref{eq43}) follow from (\ref{eq38}) and (\ref{eq42}). Also from (\ref{eq42}) and Vieta's formulas we obtain the relations
\[
1/a=1/c+2/b,\quad 2b-2/b=1/c-c,
\]
which, given that $b<-1$, forces the inequalities in (\ref{eq46}) to hold true.
\end{proof}

\begin{proof}[Proof of Theorem \ref{thm5}]
We first observe that if $|\xi|<1$, then the equation
\begin{equation}\label{eq47}
\xi=\frac{(1-aw)w^2}{w-a}
\end{equation}
has exactly two roots (counting multiplicities) in $|w|<1$. To see this, suppose $w_1$, $w_2$ and $w_3$ are the roots of (\ref{eq47}). Then, not all can be contained in $|w|<1$, for in such a case $|(1-aw_j)/(w_j-a)|>1$, $1\leq j\leq 3$, which together with (\ref{eq47}) yields
\[
|\xi|^{1/2}>|w_j|,\quad 1\leq j\leq 3.
\]
Since $w_1w_2w_3=\xi$, we would have $|\xi|<|\xi|^{3/2}$, contradicting the assumption that $|\xi|<1$.

Assume now that $|w_1|>1$, $|w_2|>1$. Denoting by $\varphi^{-1}:\mathbb{D}_1\to G_1$ the inverse of $\varphi(z)=(z^2-1)/R$,  we get from (\ref{eq33}) that
\[
\psi(w_1)=\psi(w_2)=-\varphi^{-1}(\xi),
\]
so that $w_1=w_2$. Since $h'_\varphi(w)$ only vanishes at $b$, $1/b$ and $0$, we must have $w_1=w_2=b$, so that by (\ref{eq33}), $\xi=-1/R$, contradicting that $|\xi|<1$.

Thus, being $\varphi$ a bijection from $G_1$ to $\mathbb{D}_1$, we conclude that $G_1=\Sigma_1\cup\Sigma_2$.
For $|\xi|<1$, let $w_{\xi,1}$ and $w_{\xi,2}$ denote the two solutions of  (\ref{eq47}) lying in $|w|<1$. To prove that $\Sigma_2=[\sqrt{1-R^2},1]$, we prove the equivalent statement that
\begin{equation}\label{eq111}
S:=\left\{|\xi|<1:|w_{\xi,1}|=|w_{\xi,2}|\right\}=[-R,0].
\end{equation}

Suppose $\xi\in S$. From (\ref{eq47}) we obtain that for $j=1,2$,
\[
\Re(w_{\xi,j})=
\frac{|w_{\xi,j}|^4 + a^2 |w_{\xi,j}|^6 - a^2 |\xi|^2 - |w_{\xi,j}|^2 |\xi|^2}{2a \left(|w_{\xi,j}|^4 - |\xi|^2\right)}.
\]
Hence $w_{\xi,1}=\overline{w_{\xi,2}}$, and since $h_\varphi(\overline{w})=\overline{h_\varphi(w)}$, we deduce that $\xi$ must be real, and consequently, the point $\xi\in(-1,1)$ belongs to $S$ if and only if Eq. (\ref{eq47}) has either a double real root in $(-1,1)$, or no real roots in $(-1,1)$.

Since $h'_\varphi(w)$ only vanishes at $b$, $1/b$ and $0$, it follows that Eq. (\ref{eq47}) has a double root in $(-1,1)$ only for $\xi=-R=h_\varphi(1/b)$, $\xi=0=h_\varphi(0)$. On the other hand, considering $h_\varphi(x)$ as a function of the real variable $x$, and analyzing the sign changes of $h'_\varphi(x)$ in $(-1,1)$, it is easy to see that Eq. (\ref{eq47}) has no real roots in $(-1,1)$ if and only if $\xi\in(-R,0)$. Thus, (\ref{eq111}) is proven.

Since $\Sigma_0=\emptyset$, Theorem \ref{thm2} guarantees the convergence of  $\{\nu_n\}_{n=1}^\infty$ in the weak*-topology to a measure $\lambda$ supported on $[\sqrt{1-R^2},1]$ and having logarithmic potential
\begin{equation}\label{eq113}
U^{\lambda}(z) = \Re\left(\log\left[\phi'(\infty)/\Phi(z)\right]\right),\quad z\in G_1\setminus [\sqrt{1-R^2},\infty),
\end{equation}
with the convention $0<\arg(\phi'(\infty)/\Phi(z))<2\pi$.

We proceed to prove that $\lambda=\sigma+\delta_1/2$, with $\sigma$ as in (\ref{eq112}), for which we use a well-known formula \cite[Theorem II.1.4]{Saff} that allow a measure to be recovered from its logarithmic potential.

By the continuity of the pair of complex conjugate solutions that  Eq. (\ref{eq47}) has as the parameter $\xi$ varies in the closed interval $[-R,0]$, it is clear that
\[
\Gamma_R:=\left\{w\in\mathbb{D}_1:-R\leq h_\varphi(w)\leq 0\right\}
\]
is a Jordan curve symmetric with respect to the real axis, intersecting it at $1/b$ and $0$. It is easy to see that $\Gamma_R$ is in fact an analytic curve.

The function $\Phi(z)$ maps $G_1\setminus [\sqrt{1-R^2},1]$ conformally onto the portion of the unit disk that lies exterior to $\Gamma_R$. Moreover, if for $x\in (\sqrt{1-R^2},1)$,   $\omega_{x,1},\omega_{x,2}\in\Gamma_R$ are the two complex conjugate solutions that the equation $h_\varphi(w)=\varphi(x)$ has in $\mathbb{D}_1$ (say, with $\Im{\omega_{x,1}}>0$), then
\[
\lim_{t\to 0+}\Phi(x + it)=\omega_{x,1}, \quad \lim_{t\to 0-}\Phi(x+ it)=\omega_{x,2},
\]
and we obtain from these two equalities, (\ref{eq113}) and Theorem II.1.4 of \cite{Saff} that for all $ \sqrt{1-R^2}<x<1$,
\[
\lambda([\sqrt{1-R^2},x])= \frac{\theta_x}{2\pi},
\]
where  $0<\theta_x<\pi$ is the angle formed by the two rays emanating from $0$ and passing through $\omega_{x,1}$, $\omega_{x,2}$. Given that $\lim_{x\to1-}\theta_x=\pi$, we must have $\lambda(\{1\})=1/2$, completing the proof of Theorem \ref{thm5}.
\end{proof}

\begin{proof}[Proof of Theorem \ref{thm6}] Because $G_1$ is symmetric about the real axis, each $p_n$ has real coefficients. Let $n\geq 0$ be an integer. Combining the orthogonality property of $p_n$ with Green's formula (see, e.g., \cite[p. 241]{Nehari}) and using that  $(\overline{z}^2-1)(z^2-1)=R^2$ for  $z\in L_1$, we obtain that for $1\leq m\leq \lfloor n/2\rfloor$,
\begin{align*}
0 & =\int_{G_1}p_n(z)\overline{z^{2m-1}}dxdy
=\frac{1}{4im}\int_{L_1}p_n(z)\overline{z^{2m}}dz\\
 &=\frac{1}{2i(m+1)}\int_{L_1}p_n(z)\frac{(R^2+z^2-1)^m}{(z-1)^m(z+1)^m}dz.
 \end{align*}
Hence by the Cauchy integral formula, $p^{(j)}_n(1)=0$, $0\leq j\leq \lfloor n/2\rfloor-1$. Therefore,
$p_n(z)=(z-1)^{\lfloor n/2\rfloor}q_{n}(z)$, where $q_{n}(z)$ is a polynomial of degree $n-\lfloor n/2\rfloor$.

Similarly, we obtain that for $0\leq m\leq n-\lfloor n/2\rfloor-1$,
\begin{align*}
0 & =\int_{G_1}p_n(z)\overline{z^{2m}}dxdy=\frac{1}{2i(2m+1)}\int_{L_1}p_n(z)\overline{z^{2m+1}}dz\\
 &=\frac{1}{2i(m+1)}\int_{L_1}p_n(z)\left[\frac{R^2}{z^2-1}+1\right]^m\sqrt{\frac{z+\sqrt{1-R^2}}{1+z}}\sqrt{\frac{z-\sqrt{1-R^2}}{1-z}}dz.
\end{align*}

If we now deform the contour of integration $L_1$ onto the two-sided segment $[\sqrt{1-R^2},1]$ we obtain 
\begin{equation}\label{eq49}
 \int_{\sqrt{1-R^2}}^1q_{n}(x)[f(x)]^md\lambda_n(x)=0, \quad 0\leq m\leq n-\lfloor n/2\rfloor-1,
\end{equation}
where
\[
f(x)=\frac{R^2}{x^2-1}+1,\quad
d\lambda_n=(1-x)^{[n/2]}\sqrt{\frac{x^2-(1-R^2)}{1-x^2}} dx.
\]

Let $\alpha_1,\ldots,\alpha_N$ be the roots that the polynomial of real coefficients $q_{n}$ has in $(\sqrt{1-R^2},1)$. Since $f(x)$ is decreasing in $(\sqrt{1-R^2},1)$, we have
\[
\int_{\sqrt{1-R^2}}^1q_{n}(x)\prod_{k=1}^N[f(x)-f(\alpha_k)]d\lambda_n(x)\not=0,
\]
which in view of (\ref{eq49}) forces $N=n-\lfloor n/2\rfloor$.
\end{proof}

\begin{proof}[Proof of Theorem \ref{thm8}] The proof is based on applying Proposition \ref{prop2} to the number $\bar{\mu}:=(R-\sqrt{R^2-4})/2$. For this $\bar{\mu}$, we have that $L_r\subset G_{1/r}$ for all $\bar{\mu}<r<1$. For otherwise, there must exist $r_0\in(\bar{\mu},1)$ for which $L_{r_0}\cap L_{1/r_0}\not=\emptyset$. Hence we can find two numbers $w$ and $v$ such that $|w|=|v|=r_0$ and $\psi(w)=\psi(1/v)$. By (\ref{eqq11}), this implies that $1\geq  (|Rw|-1)\left(\left|R/v\right|-1\right)$, or equivalently, $r_0^2-Rr_0+1\geq 0$. This last inequality holds if and only if either $r_0\leq \bar{\mu}$ or $r_0\geq 1/\bar{\mu}$ $(>1)$, contradicting that $r_0\in(\bar{\mu},1)$.

Consider now the sequence of real numbers $\{w_n\}_{n=1}^\infty$ defined recursively as follows: $w_1$ is any number satisfying $\bar{\mu}< w_1<1$, and
\[
w_{n+1}=\frac{1}{R-w_{n}},\quad n\geq 1.
\]
It is easy to prove by induction that $w_n>\bar{\mu}$ for all $n\geq 1$, while straightforward computations yield that $\psi(w_{n+1})=\psi\left(1/\overline{w_n}\right)$, $n\geq 1$.

We can then invoke Proposition \ref{prop2} to conclude that $\mu=\bar{\mu}$ and that for each $p\geq 1$, $\Sigma_p$ consists of those points $z\in G_1$ for which the equation $z=Rw-1+(Rw-1)^{-1}$ has exactly $p$ solutions of largest modulus in $\mu<|w|<1$ (counting multiplicities), thereby establishing Theorem \ref{thm8}.
\end{proof}

\begin{proof}[Proof of Proposition \ref{prop8}] Part (a) will follow from Proposition \ref{prop1} once we prove that $\varphi(\psi(w))$ has a meromorphic continuation to $\mathfrak{D}:=\mathcal{\mathbb{D}}_1\setminus\{\mu\}$. For this, we shall prove that $\mathfrak{D}$ can be exhausted by continuously expanding domains $\mathfrak{D}_t$ satisfying the hypothesis of Proposition \ref{prop6} (to be precise, our domains differ from those contemplated in Proposition \ref{prop6} in that they expand as the parameter $t$ \emph{decreases} over an interval, but this, of course, is of no significance).

For $0< t\leq  \pi/2$, let us denote by $\mathfrak{D}_t$ the set
\begin{equation*}
\mathbb{D}_1\setminus \left\{w:\frac{2\cos t}{ R+\sqrt{R^2-4\cos^2t}}\leq |w|\leq \frac{2}{R+\cos t\sqrt{R^2-4}} ,\  |\arg w|\leq t \right\}.
\end{equation*}

Obviously, for $0< t_1<t_0\leq \pi/2$ (recall that $\rho=2/R$),
\[
\{w:\rho<|w|<1\}\subset \mathfrak{D}_{t_0}\subset \mathfrak{D}_{t_1}\subset \mathbb{D}_1,\quad \bigcap_{t<t_0}\mathfrak{D}_t = \overline{\mathfrak{D}}_{t_0}\setminus\mathbb{T}_1,
\]
and $\mathfrak{D}:=\bigcup_{0< t\leq \pi/2}\mathfrak{D}_{t}$. Then, by Proposition \ref{prop6}, in order to prove that $\varphi(\psi(w))$ has a meromorphic continuation to $\mathfrak{D}$, it suffices to show that
\begin{equation}\label{eqq30}
\psi(\mathfrak{D}_{\pi/2})\subset G_1,
\end{equation}
\begin{equation}\label{eqq10}
\psi(\overline{\mathfrak{D}_t})\subset \overline{G}_1\cup \psi(\mathfrak{D}^*_t),\quad 0< t< \frac{\pi}{2}.
\end{equation}
We only prove (\ref{eqq10}) as (\ref{eqq30}) is much easier and follows similarly.

Suppose, on the contrary, that (\ref{eqq10}) does not hold, that is, for some $0<t< \pi/2$, points $w_1=r_1e^{i\theta_1}\in \overline{\mathfrak{D}_t}$ and $w_2=r_2e^{i\theta_2}\in \mathbb{D}_1\setminus\mathfrak{D}_t$  can be found such that
\[
\psi(w_1)=\psi(1/\overline{w_2}).
\]
By (\ref{eqq11}), this equality holds if and only if $
w_1(R-\overline{w_2})=1$, which implies that
\begin{equation}\label{eqq19}
r_1(R-r_2)\leq 1,
\end{equation}
\begin{equation}\label{eqq16}
r_1r_2\cos\theta_2=Rr_1-\cos\theta_1,\quad
\sin\theta_1=-r_1r_2\sin\theta_2,
\end{equation}
\begin{equation}\label{eqq1}
2Rr_1\cos\theta_1=(R^2-r_2^2)r_1^2+1.
\end{equation}

Since $w_2\in \mathbb{D}_1\setminus\mathfrak{D}_t$, we have
\begin{equation}\label{eqq22}
\cos t\leq\cos \theta_2,
\end{equation}
and since $r_1\leq  1$ and $r_2<1$, we get from (\ref{eqq1}) that $\cos\theta_1>0$, which together with the second equality of (\ref{eqq16}) implies that
\begin{equation}\label{eqq25}
\cos \theta_2<\cos\theta_1 \quad \mathrm{unless}\quad \cos\theta_1=\cos\theta_2=1.
\end{equation}
Hence $\cos t<\cos\theta_1$, and given that $w_1\in \overline{\mathfrak{D}_t}$, we must have that either
\begin{equation*}
r_1\leq  \frac{2\cos t}{ R+\sqrt{R^2-4\cos^2t}}\leq r_2\quad \mathrm{or}\quad
r_2\leq \frac{2}{R+\cos t\sqrt{R^2-4}}\leq r_1.
\end{equation*}

In the first case, we obtain from  (\ref{eqq25}) and the first equality in (\ref{eqq16}) that
\[
\cos\theta_2< \frac{R}{r_1^{-1}+r_2}\leq \frac{R}{r_1^{-1}+r_1}\leq \cos t\quad \mathrm{if} \quad \cos\theta_2<\cos\theta_1,
\]
and that
\[
1=\frac{R}{r_1^{-1}+r_2}\leq \cos t\quad \mathrm{if} \quad \cos\theta_2=\cos\theta_1=1,
\]
contradicting (\ref{eqq22}) and the fact that $t>0$. In the second case, $r_1>2/(R+\sqrt{R^2-4})$ and $1<r_1(R-r_1)\leq r_1(R-r_2)$, contradicting (\ref{eqq19}).

It only remains to prove Proposition \ref{prop8}(b). Let us denote the meromorphic continuation of the map  $\varphi(z)$ to  $\overline{\mathbb{C}}\setminus \{R^2-2\}$  by the same letter $\varphi$, so that being  $\psi(w)$ meromorphic in $\overline{\mathbb{C}}$ and mapping $\overline{\mathbb{C}}\setminus\{\mu,1/\mu\}$ onto $\overline{\mathbb{C}}\setminus \{R^2-2\}$, we simply have that
\begin{equation}\label{eq121}
h_\varphi(w)=\varphi(\psi(w))=\frac{1}{\overline{\varphi(\psi(1/\overline{w}))}}, \quad w\in \overline{\mathbb{C}}\setminus\{\mu,1/\mu\}.
\end{equation}

Let $z\in\Sigma_0$ be fixed, and suppose that for some $u_1\in \mathbb{D}_1\setminus\{\mu\}$, $h_\varphi(u_1)=\varphi(z)$. Consider the sequence of points in the extended complex plane that is recursively generated out of $u_1$ by the relation $u_{n+1}=R-1/u_n$. It follows from (\ref{eqq11}) that $\psi(u_n)=\psi(1/\overline{u_{n+1}})$, so that by (\ref{eq121}), $h_\varphi(u_n)=\varphi(z)$ if $n$ is odd, while $h_\varphi(u_n)=1/\overline{\varphi(z)}$ if $n$ is even. As a consequence, not all the $u_n$'s can be contained in $\mathbb{D}_1$, for in such a case they would have to accumulate over $\mu$, but given that $\mu^2-R\mu+1=0$, we would also have that $|u_{n+1}-\mu|=|u_n-\mu|/|\mu u_n|>|u_n-\mu|$, yielding a contradiction.

Now, $u_1\in \mathbb{D}_1$, and if $u_n\in \mathbb{D}_1$ and $\psi(u_n)\not\in \overline{G}_1$, then  there is a unique $u'_n$ with $|u_n'|>1$ such that $\psi(u_n)=\psi(u_n')=\psi(1/\overline{u_{n+1}})$. Hence $u_{n+1}=1/\overline{u_n'}\in\mathbb{D}_1$. As a consequence, there must exist an index $N$ such that $u_{N}\in \mathbb{D}_1$ and $\psi(u_N)\in \overline{G}_1$, so that $|h_\varphi(u_N)|=|\varphi(\psi(u_N))|\leq 1$, which is only possible if $N$ is odd and $\psi(u_N)=z$.

Since the equation $\psi(w)=z$ has for roots $(z+2\pm\sqrt{z^2-4})/(2R)$, both lying in $|w|\leq \mu$, it follows that $u_1$ has to be an odd-indexed element of one of the two sequences generated out of these roots by the relation (\ref{eq122}), and it is easy to verify that all the odd-indexed elements of these two sequences are  solutions of the equation $h_\varphi(w)=\varphi(z)$.
\end{proof}

\nocite{*}
\bibliographystyle{cdraifplain}
\bibliography{xampl}
\end{document}